   \def\MR#1{}
\newcommand{\R}{\mathbb{R}}
\newcommand{\Z}{\mathbb{Z}}
\newcommand{\CC}{\mathbb{C}}
\newcommand{\RR}{\mathbb{R}}
\newcommand{\HH}{\mathbb{H}}
\newcommand{\ZZ}{\mathbb{Z}}
\renewcommand{\P}{\mathcal P}
\newcommand{\W}{\mathcal W}
\newcommand{\vol}{{\rm vol}}
\newcommand{\cut}{{\backslash \backslash}}
\newcommand{\guts}{{\rm guts}}
\newcommand{\bdy}{\partial}
\newcommand{\specvol}{{\rm Spec}_{\vol}}
\newcommand{\specdet}{{\rm Spec}_{\rm det}}
\newcommand{\voct}{{v_{\rm oct}}}
\newcommand{\vtet}{{v_{\rm tet}}}
\theoremstyle{plain}
\newtheorem{theorem}{Theorem}[section]
\newtheorem{corollary}[theorem]{Corollary}
\newtheorem{lemma}[theorem]{Lemma}
\newtheorem{prop}[theorem]{Proposition}
\newtheorem{conjecture}[theorem]{Conjecture}
\newtheorem*{namedtheorem}{\theoremname}
\newcommand{\theoremname}{testing}
\newenvironment{named}[1]{\renewcommand{\theoremname}{#1}\begin{namedtheorem}}{\end{namedtheorem}}
\theoremstyle{definition}
\newtheorem{define}[theorem]{Definition}
\newtheorem{question}[theorem]{Question}
\newtheorem{remark}[theorem]{Remark}
\title[Geometrically and diagrammatically maximal knots]{Geometrically and diagrammatically maximal knots}
\author[A.\ Champanerkar]{Abhijit Champanerkar}
\address{Department of Mathematics, College of Staten Island \& The Graduate Center, City University of New York, New York, NY, USA}
\email{abhijit@math.csi.cuny.edu}
\author[I. \ Kofman]{Ilya Kofman}
\address{Department of Mathematics, College of Staten Island \& The Graduate Center, City University of New York, New York, NY, USA}
\email{ikofman@math.csi.cuny.edu}
\author[J. \ Purcell]{Jessica S.\ Purcell}
\address{School of Mathematical Sciences, 9 Rainforest Walk, Monash University, Victoria 3800, Australia}
\email{jessica.purcell@monash.edu}
\subjclass[2010]{57M25, 57M50}
\begin{document}

\begin{abstract}
The ratio of volume to crossing number of a hyperbolic knot is known
to be bounded above by the volume of a regular ideal octahedron, and a
similar bound is conjectured for the knot determinant per crossing.
We investigate a natural question motivated by these bounds: For which
knots are these ratios nearly maximal?  We show that many families of
alternating knots and links simultaneously maximize both ratios.
\end{abstract}

\maketitle

\section{Introduction}
Despite many new developments in the fields of hyperbolic geometry,
quantum topology, 3--manifolds, and knot theory, there remain notable
gaps in our understanding about how the invariants of knots and links that
come from these different areas of mathematics are related to each
other.  In particular, significant recent work has focused on
understanding how the hyperbolic volume of knots and links is related
to diagrammatic knot invariants (see, e.g.,\ \cite{InteractionsBook,
  fkp:gutsjp}).  In this paper, we investigate such relationships
between the volume, determinant, and crossing number for sequences of
hyperbolic knots and links.

For any diagram of a hyperbolic link $K$, an upper bound for the hyperbolic volume $\vol(K)$ was given by D.~Thurston by decomposing $S^3-K$ into octahedra, placing one octahedron at each crossing, and pulling remaining vertices to $\pm\infty$.
Any hyperbolic octahedron has volume bounded above by the volume of the regular ideal octahedron, $\voct \approx 3.66386$.  So if $c(K)$ is the crossing number of $K$, then
\begin{equation}\label{eq:dylan}
 \frac{\vol(K)}{c(K)} \leq \voct. 
\end{equation} 

This result motivates several natural questions about the quantity $\vol(K)/c(K)$, which we call the {\em volume density} of $K$. How sharp is the bound of equation~\eqref{eq:dylan}?  For which links is the volume density very near $\voct$? In this paper, we address these questions from several different directions, and present several conjectures motivated by our work.

We also investigate another notion of density for a knot or link.
For any non-split link $K$, we say that $2\pi\log\det(K)/c(K)$ is its {\em determinant density}.
The following conjectured upper bound for the determinant density is equivalent to a conjecture of Kenyon for planar graphs (Conjecture \ref{conj:kenyon} below).
\begin{conjecture}\label{conj:diagmax}
If $K$ is any knot or link, \[ \frac{2\pi\log\det(K)}{c(K)} \leq \voct.\]
\end{conjecture}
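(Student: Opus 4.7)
The plan is to prove the bound in two stages: reduce it to a combinatorial spanning-tree inequality on planar graphs, then prove that inequality via Mahler-measure and transfer-matrix methods. For the reduction, given any reduced diagram $D$ of $K$ with a checkerboard coloring, the reduced Goeritz matrix $G_D^*$ satisfies $|\det G_D^*| = \det(K)$, and the signed matrix-tree theorem realizes $\det G_D^*$ as a $\pm 1$--weighted sum over spanning trees of the underlying unsigned checkerboard graph $\Gamma_D$, so by the triangle inequality $\det(K) \leq \tau(\Gamma_D)$. Here $\Gamma_D$ is planar with $|E(\Gamma_D)| = c(D)$ edges. Choosing $D$ to realize the crossing number, and using the identity $v_8 = 4C$ where $C$ is Catalan's constant, the conjecture reduces to the purely combinatorial inequality
\[ \log \tau(\Gamma) \leq \frac{2C}{\pi}\, |E(\Gamma)| \qquad \text{for every finite planar (multi)graph } \Gamma. \]

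To attack this bound, I would first pass from finite to $\ZZ^2$-periodic planar graphs $\widetilde\Gamma$ via a standard subadditivity argument, so that the per-edge spanning-tree entropy $h(\widetilde\Gamma) := \lim_n \log\tau(\Gamma_n)/|E(\Gamma_n)|$ both is well-defined on periodic limits and dominates the values on finite fundamental domains. Kenyon's transfer-matrix formula expresses $h(\widetilde\Gamma)$ as a torus integral of the logarithm of the characteristic polynomial of the periodic graph Laplacian; this evaluates to exactly $2C/\pi$ when $\widetilde\Gamma = \ZZ^2$ and, by direct computation, to strictly smaller values on the triangular, hexagonal, and kagome lattices. To prove extremality of $\ZZ^2$ among all periodic planar graphs, I would pursue three complementary strategies in parallel: (i) a discrete rearrangement argument producing a sequence of local moves (vertex splits, edge contractions, star--triangle transformations), each non-decreasing in $\log\tau/|E|$, that iteratively flow an arbitrary periodic planar graph toward $\ZZ^2$; (ii) a convexity argument showing that the relevant Mahler measure is concave in the Kasteleyn weights, combined with an Alexandrov--Fenchel-type inequality for planar networks; (iii) an electrical-network argument dominating the effective resistances on $\widetilde\Gamma$ by those on $\ZZ^2$ via a discrete Dirichlet principle after a circle-packing uniformization.

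The extremality of $\ZZ^2$ is the principal obstacle. Strategy (i) is delicate because each local move must preserve planarity while monotonically tracking a nonlinear spectral quantity; (ii) presupposes a convex structure on periodic planar weighted graphs which is not standard and may require new ideas; and (iii) must compare random walks across different planar graphs without a canonical embedding. As a concrete warm-up I would verify each strategy on the explicitly tractable checkerboard graphs of the weaving knots---the $p \times q$ square tori, whose spanning-tree entropies are classically computable and converge upward to the $\ZZ^2$ value $2C/\pi$---before attempting the general planar case.
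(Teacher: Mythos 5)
The statement you are asked to prove is stated in the paper as Conjecture~\ref{conj:diagmax}, and the paper does not prove it; it only establishes (in Section~\ref{sec:diag}) that it is \emph{equivalent} to Kenyon's Conjecture~\ref{conj:kenyon} that $\log\tau(G)/e(G)\leq 2{\rm C}/\pi$ for every finite planar graph, which is open. Your first stage --- passing from $\det(K)$ to a signed spanning-tree sum over the Tait graph and applying the triangle inequality to get $\det(K)\leq\tau(\Gamma_D)$ with $e(\Gamma_D)=c(D)$ --- is correct and is essentially the paper's own reduction (Lemma~\ref{det_st} together with the discussion following Conjecture~\ref{conj:kenyon}; the paper routes the non-alternating case through Proposition~\ref{prop:det_alt}, but your triangle-inequality version is the same content). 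So up to that point you have faithfully reproduced the known equivalence.

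The gap is everything after that: your second stage is a research program, not a proof, and you say as much (``the extremality of $\ZZ^2$ is the principal obstacle''). Concretely: (a) the passage from finite planar graphs to $\ZZ^2$-periodic ones is not a ``standard subadditivity argument'' --- an arbitrary finite planar graph does not sit inside a periodic planar graph in a way that lets you dominate $\log\tau/|E|$ by a periodic entropy, and it is precisely the finite, irregular case that makes Kenyon's conjecture hard; (b) none of strategies (i)--(iii) is carried out, and each has a known obstruction --- local moves such as star--triangle transformations rescale $\tau$ by explicit factors that do not monotonically track $\log\tau/|E|$, no convex structure on periodic planar weighted graphs of the kind (ii) requires is available, and (iii) would need a comparison of effective resistances across non-isomorphic planar graphs for which no discrete Dirichlet principle applies. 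Verifying the inequality on the weaving-knot lattices, as you propose, only recovers the asymptotic statement already proved in Theorem~\ref{thm:diagmax2} and does not touch the universal upper bound. In short, the combinatorial inequality at the heart of the claim remains unproven, so the proposal does not constitute a proof of the statement.
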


We study volume and determinant density by considering sequences of knots and links.

\begin{define}\label{def:geommax}\label{def:diagmax}
A sequence of links $K_n$ with $c(K_n)\to \infty$ is \emph{geometrically maximal} if
\[ \lim_{n\to\infty}\frac{\vol(K_n)}{c(K_n)}=\voct. \]
Similarly, a sequence of knots or links $K_n$ with $c(K_n)\to \infty$ is {\em diagrammatically maximal} if
\[ \lim_{n\to\infty}\frac{2\pi\log\det(K_n)}{c(K_n)}=\voct. \]
\end{define}

In this paper, we find many families of geometrically and diagrammatically maximal knots and links.
Our examples are related to the \emph{infinite weave} $\W$, which we define
to be the infinite alternating link with the square grid
projection, as in Figure~\ref{fig:infweave}. We will see in
Section~\ref{sec:weave} that there is a complete hyperbolic structure
on $\RR^3-\W$ obtained by tessellating the manifold by regular ideal
octahedra such that the volume density of $\W$ is exactly $\voct$.
Therefore, a natural place to look for geometrically maximal
knots is among those with geometry approaching $\RR^3-\W$. We will see
that links whose diagrams converge to the diagram of $\W$ in an
appropriate sense are both geometrically and diagrammatically
maximal. To state our results, we need to define the convergence of
diagrams.

\begin{figure}
 \includegraphics[scale=0.35]{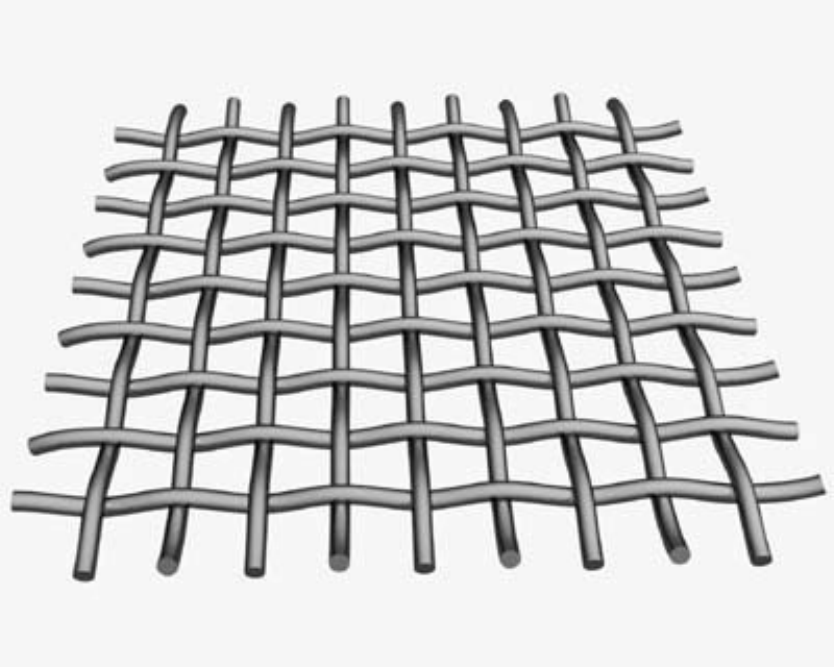}
\caption{The infinite alternating weave}
\label{fig:infweave}
\end{figure}

\begin{define}\label{def:folner}
Let $G$ be any possibly infinite graph. For any finite subgraph $H$,
the set $\partial H$ is the set of vertices of $H$ that share an edge
with a vertex not in $H$.  We let $|\cdot|$ denote the number of
vertices in a graph.  An exhaustive nested sequence of
connected subgraphs,
$\{H_n\subset G \mid \; H_n\subset H_{n+1},\; \bigcup_n H_n=G\}$, is a {\em F{\o}lner sequence} for $G$ if
\[ \lim_{n\to\infty}\frac{|\partial H_n|}{|H_n|}=0. \]
The graph $G$ is {\em amenable} if a F{\o}lner sequence for $G$ exists. In particular, the infinite square grid $G(\W)$ is amenable.  
\end{define}

For any link diagram $K$, let $G(K)$ denote the projection graph of
the diagram. We will need a particular diagrammatic condition called a
{\em cycle of tangles}, which is defined carefully in Definition
\ref{def:CycleOfTangles} below.  For an example, see Figure
\ref{fig:celtic}(a).  
We now show two strikingly similar ways to obtain geometrically and
diagrammatically maximal links.

\pagebreak

\begin{theorem}\label{thm:geommax}
Let $K_n$ be any sequence of hyperbolic alternating link diagrams that contain no cycle of tangles, such that
\begin{enumerate}
\item there are subgraphs $G_n\subset G(K_n)$ that form a F\o lner sequence for $G(\W)$, and
\item $\lim\limits_{n\to\infty} |G_n|/ c(K_n) = 1$.
\end{enumerate}
Then $K_n$ is geometrically maximal:
$\displaystyle \lim_{n\to\infty}\frac{\vol(K_n)}{c(K_n)}=\voct. $
\end{theorem}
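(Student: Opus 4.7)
Given the universal upper bound $\vol(K_n)/c(K_n) \leq v_8$ from equation~\eqref{eq:dylan}, it suffices to establish the matching lower bound $\liminf_n \vol(K_n)/c(K_n) \geq v_8$. The plan is to prove an inequality of the shape
\[ \vol(K_n) \;\geq\; v_8\,|G_n| \;-\; C\,|\partial G_n| \]
for some constant $C$ independent of $n$. Once this is in hand, dividing by $c(K_n)$ and invoking the F\o lner condition $|\partial G_n|/|G_n| \to 0$ together with the hypothesis $|G_n|/c(K_n) \to 1$ immediately gives $\liminf_n \vol(K_n)/c(K_n) \geq v_8$, completing the argument.

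The main tool I would use is the Agol--Storm--Thurston volume inequality: for any essential surface $S$ in a hyperbolic $3$-manifold $M$, one has $\vol(M) \geq (v_8/2)\,|\chi(\guts(S))|$. Applied to the union of the two checkerboard surfaces of the alternating diagram of $K_n$, this reduces the problem to a combinatorial estimate on the guts. The no-cycle-of-tangles hypothesis, combined with alternation (via the essential surface theory of Menasco and its refinements by Lackenby), ensures that the checkerboard surfaces are essential and that the $I$-bundle part of the characteristic submanifold decomposition is localized, so that almost all of the surface contributes to the guts rather than being absorbed into product regions.

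The combinatorial heart of the argument is then to show that $|\chi(\guts)| \geq 2|G_n| - C|\partial G_n|$. For a crossing well inside $G_n$, the local structure of $K_n$ agrees with that of the infinite weave $\W$, whose complement is tessellated by regular ideal octahedra of volume $v_8$, one per crossing (as will be described in Section~\ref{sec:weave}). Each such interior crossing therefore contributes $2$ to $|\chi(\guts)|$, which matches exactly the factor $v_8$ per crossing when combined with the Agol--Storm--Thurston inequality. Crossings lying near $\partial G_n$ may fail to exhibit this local weave structure, but since there are only $|\partial G_n|$ of them and each alters the Euler characteristic by a bounded amount, their total contribution is absorbed into the error term $C|\partial G_n|$.

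The principal obstacle will be carrying out this local-to-global analysis rigorously: one must verify that interior crossings of $G_n$ contribute to the guts exactly as they do in $\W$, and that long-range essential annuli in the surface complement cannot cause the $I$-bundle part of the characteristic submanifold to reach into regions that should be guts. The no-cycle-of-tangles hypothesis is precisely the diagrammatic condition that prevents such long-range essential annuli and spheres, and translating it into a genuinely local statement about the characteristic submanifold near interior crossings of $G_n$ is the most delicate ingredient of the argument.
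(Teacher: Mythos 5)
Your overall architecture (the upper bound from \eqref{eq:dylan} plus a lower bound of the form $v_8|G_n| - C|\partial G_n|$) is the right one, and the appeal to Agol--Storm--Thurston and to the guts of the checkerboard surfaces is indeed where the paper's proof starts. But the combinatorial heart of your proposal --- that $|\chi(\guts)| \geq 2|G_n| - C|\partial G_n|$, i.e.\ that each interior crossing contributes $2$ to $|\chi(\guts)|$ --- is false, and no Euler-characteristic count of guts can close the gap. For a checkerboard surface $B$ of a bigon-free alternating diagram with $c$ crossings, Lackenby's computation gives $\chi(\guts(S^3\cut B)) = \chi(S^3\cut B)$, and the two checkerboard surfaces together satisfy $\chi(B)+\chi(R)=2-c$, so for weave-like diagrams each surface has guts of Euler characteristic roughly $-c/2$, not $-2c$. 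Feeding this into the Euler-characteristic form of Agol--Storm--Thurston yields exactly the known bound $\frac{v_8}{2}(c(K)-2)\leq \vol(K)$ of \eqref{ASTbound}, which the paper explicitly notes is too weak: it gives asymptotic volume density $v_8/2$, half of what is required. (A further problem: you cannot apply the theorem to ``the union of the two checkerboard surfaces,'' since they intersect along the crossing arcs and so their union is not an embedded essential surface; the paper instead cuts along $B$, doubles, cuts along the double of $R$, and doubles again, in Lemmas~\ref{lemma:step1-cut1}--\ref{lemma:step2-homeo}.)

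The missing idea is that one must retain the actual hyperbolic volume of the (double of the) guts rather than passing to its Euler characteristic. The paper shows (Theorem~\ref{thm:volbound}) that after removing bigons the guts is the entire cut manifold, and that its double is assembled from eight isometric copies of the checkerboard ideal polyhedron endowed with a right-angled hyperbolic structure, giving $\vol(S^3-K) \geq 2\vol(P(K'))$. The local-to-global step you anticipate is then carried out not on Euler characteristics but on circle patterns: by He's rigidity theorem, via Atkinson's Lemma~\ref{lemma:atkinson}, a face of $P(K_n)$ whose disk pattern agrees with that of the infinite weave to generation $\ell$ contributes volume within $\epsilon_\ell$ of $v_8/2$ (half a regular ideal octahedron), and the F\o lner hypothesis guarantees that all but $O(|\partial G_n|)$ faces agree to high generation (Lemma~\ref{lemma:atkinsonpoly}). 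This is what produces $v_8$ per crossing (two polyhedra times $v_8/2$ per vertex), and it has no substitute in a purely combinatorial count of $\chi(\guts)$.
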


\begin{theorem}\label{thm:diagmax}
Let $K_n$ be any sequence of alternating link diagrams such that
\begin{enumerate}
\item there are subgraphs $G_n\subset G(K_n)$ that form a F\o lner sequence for $G(\W)$, and
\item $\lim\limits_{n\to\infty} |G_n|/ c(K_n) = 1$.
\end{enumerate}
Then $K_n$ is diagrammatically maximal:
$\displaystyle \lim_{n\to\infty}\frac{2\pi\log\det(K_n)}{c(K_n)} = \voct. $
\end{theorem}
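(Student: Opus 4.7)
The plan is to translate the statement into one about spanning-tree counts via the classical identity $\det(K) = \tau(G^T(K))$ for any connected alternating link $K$, where $G^T(K)$ is either Tait (checkerboard) graph of the diagram and $\tau$ counts spanning trees. Since each crossing contributes exactly one edge of $G^T(K)$, we have $c(K) = |E(G^T(K))|$, so
\[ \frac{2\pi\log\det(K_n)}{c(K_n)} = \frac{2\pi\log\tau(G^T(K_n))}{|E(G^T(K_n))|}. \]

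First, I would convert the F\o lner and density hypotheses on $G(K_n)$ into analogous statements for the Tait graphs. Since $G(K)$ is the medial graph of $G^T(K)$, the F\o lner subgraphs $G_n \subset G(K_n)$ exhausting $G(\W) = \ZZ^2$ correspond, via the regions of one checkerboard color they enclose, to subgraphs $H_n \subset G^T(K_n)$ exhausting a square lattice. The hypothesis $|G_n|/c(K_n) \to 1$ transfers to $|E(H_n)|/c(K_n) \to 1$ and $|\bdy H_n|/|H_n| \to 0$; together these ensure that a uniformly random vertex of $G^T(K_n)$ lies deep inside the $\ZZ^2$-piece with probability tending to $1$, i.e.\ the sequence $G^T(K_n)$ Benjamini--Schramm converges to $\ZZ^2$.

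Second, I would apply the tree-entropy formula for $\ZZ^2$, either via the classical result of Burton--Pemantle for F\o lner exhaustions or R.~Lyons's extension to Benjamini--Schramm convergent sequences:
\[ \lim_{n\to\infty}\frac{\log\tau(G^T(K_n))}{|V(G^T(K_n))|} = m\bigl(4-2\cos(2\pi x)-2\cos(2\pi y)\bigr) = \frac{4G}{\pi}, \]
where $m$ denotes logarithmic Mahler measure and $G$ is Catalan's constant. Since $v_8 = 4G$ and the square lattice has $2$ edges per vertex, the per-edge entropy is $2G/\pi$, and multiplying by $2\pi$ gives exactly $v_8$. Combined with the first display, this yields $2\pi\log\det(K_n)/c(K_n)\to v_8$.

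The main obstacle will be verifying that Lyons's theorem applies to the full sequence $G^T(K_n)$, not merely to the $\ZZ^2$-piece $H_n$. The residual part $G^T(K_n)\setminus H_n$ carries $o(|V(G^T(K_n))|)$ vertices and, by the bounded local degree of alternating Tait graphs, $o(|E(G^T(K_n))|)$ edges; this preserves both Benjamini--Schramm convergence to $\ZZ^2$ and the edge-to-vertex ratio $|E(G^T(K_n))|/|V(G^T(K_n))|\to 2$. Alternatively, one can control the residual via Rayleigh monotonicity, showing $|\log\tau(G^T(K_n)) - \log\tau(H_n)| = o(c(K_n))$ directly.
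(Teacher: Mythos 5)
Your proposal is correct and follows essentially the same route as the paper: the paper also reduces to spanning-tree counts of the Tait graph via $\det(K_n)=\tau(G_{K_n})$ and $c(K_n)=e(G_{K_n})$, invokes the square-lattice tree entropy $4{\rm C}/\pi$ of Burton--Pemantle/Shrock--Wu together with Lyons's transfer result (his Corollary~3.13, i.e.\ the Benjamini--Schramm-type statement you cite) to handle the residual part, and concludes using $4{\rm C}=v_8$ and the two-to-one edge-to-vertex ratio. The only quibble is your appeal to ``bounded local degree of alternating Tait graphs'' (Tait graphs need not have bounded degree); but the bound on residual edges you need follows directly from hypothesis (2), so nothing is lost.
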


Ideas for the proof of Theorem~\ref{thm:geommax} are due to Agol.  His
unpublished results were mentioned in \cite{GarLe}, but the geometric
argument suggested in \cite{GarLe} for the lower bound is flawed
because it is based on existing volume bounds in \cite{lackenby,
  AgolStormThurston}, and these bounds only imply that for $K_n$ as
above, the asymptotic volume density lies in $[\voct/2,\,\voct]$ (see
equation~\eqref{bestbound} below). The proof of geometric maximality,
particularly the asymptotically correct lower volume bounds, uses a
combination of two main ideas.  First, we use a ``double guts'' method
to show that the volume of a link with certain diagrammatic properties
is bounded below by the volume of a right-angled polyhedron that is
combinatorially equivalent to the Menasco polyhedron of the link
(Theorem~\ref{thm:volbound}).  Second, we use the rigidity of circle
patterns associated to right-angled polyhedra to pass from ideal
polyhedra with finitely many faces to the complement of $\W$, so that
the F{\o}lner-type conditions above imply that the volume density of
$K_n$ converges to $\voct$.  Then Theorem~\ref{thm:geommax} applies to
more general links than those mentioned in \cite{GarLe}, including
links obtained by introducing crossings before taking the closure of a
finite piece of $\W$, for example as in Figure~\ref{fig:celtic}(b).

\begin{figure}
\begin{tabular}{ccc}
  \includegraphics[height=1.2in]{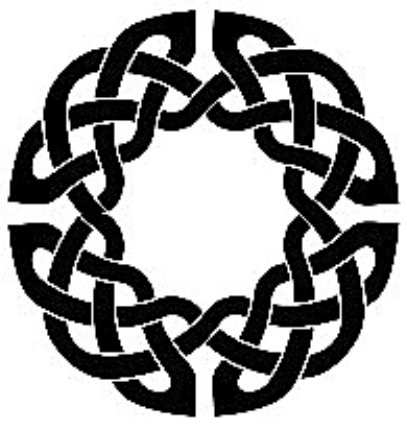} & \hspace*{0.5in} &
 \includegraphics[height=1.2in]{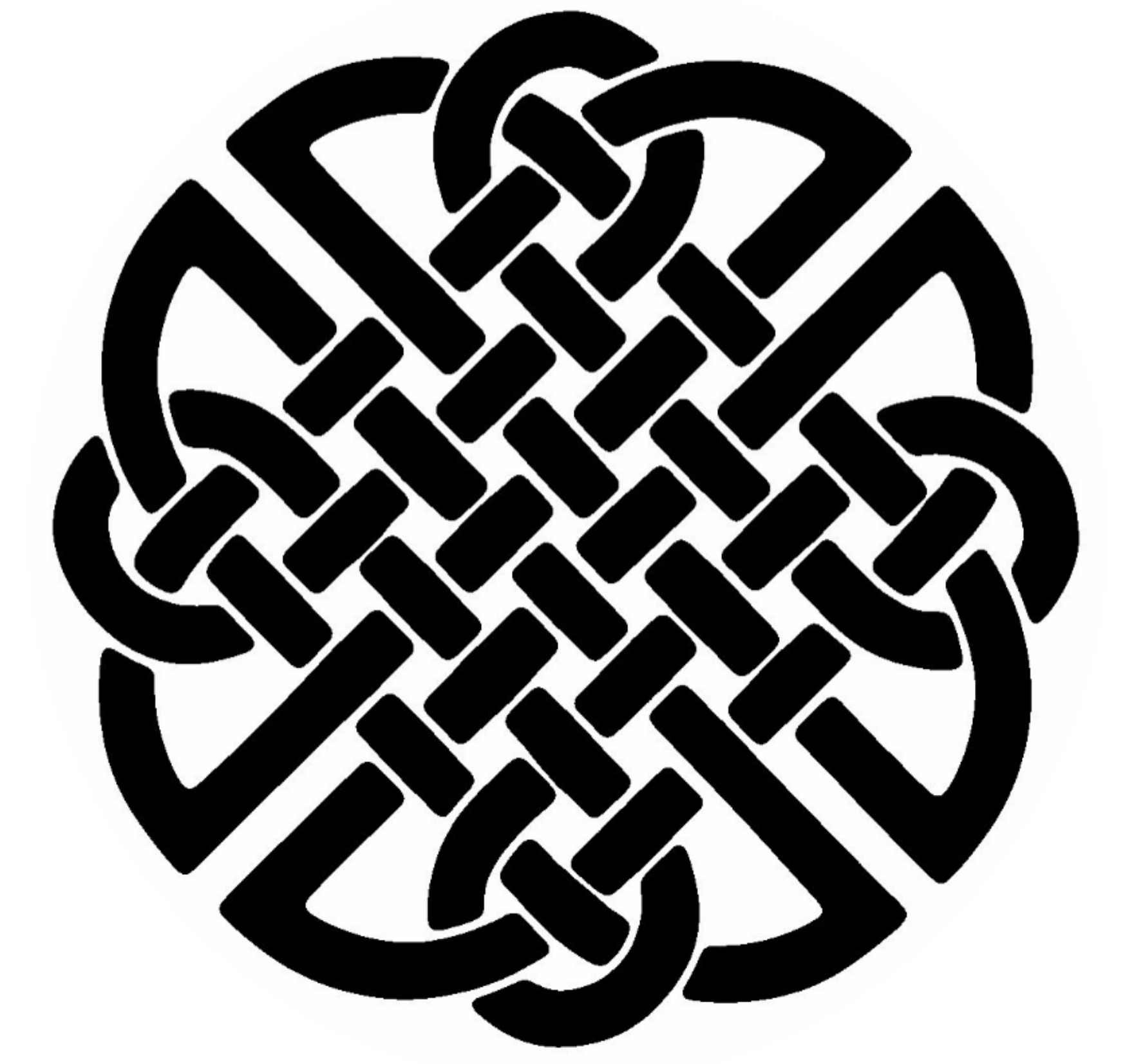} \\
(a) & \qquad & (b) \\
\end{tabular}
\caption{(a) A Celtic knot diagram that has a cycle of tangles.
  (b) A Celtic knot diagram with no cycle of tangles, which could be in a sequence that
satisfies conditions of Theorem~\ref{thm:geommax}.}
  \label{fig:celtic}
\end{figure}

Notice that any sequence of links satisfying the hypotheses of Theorem~\ref{thm:geommax} also satisfies the hypotheses of Theorem~\ref{thm:diagmax}.  This motivates the following questions.

\begin{question}\label{Q:max}
Is any diagrammatically maximal sequence of knots geometrically
maximal, and vice versa?
\[ \mbox{i.e.}\quad \lim_{n\to\infty}\frac{\vol(K_n)}{c(K_n)} = \voct  \quad \Longleftrightarrow \quad \lim_{n\to\infty}\frac{2\pi\log\det(K_n)}{c(K_n)} = \voct \ ?\]
\end{question}

Both our diagrammatic and geometric arguments below rely on special properties of alternating links.  With present tools, we cannot say much about links that are {\em mostly} alternating.

\begin{question}\label{Q:altmax}
Let $K_n$ be any sequence of links such that 
\begin{enumerate}
\item there are subgraphs $G_n\subset G(K_n)$ that form a F\o lner sequence for $G(\W)$, 
\item $K_n$ restricted to $G_n$ is alternating, and
\item $\lim\limits_{n\to\infty} |G_n|/ c(K_n) = 1$.
\end{enumerate}
Is $K_n$ geometrically and diagrammatically maximal?
\end{question}

The following family of knots and links provides an explicit example
satisfying the conditions of Theorems~\ref{thm:geommax} and
\ref{thm:diagmax}.  A {\em weaving knot} $W(p,q)$ is the alternating
knot or link with the same projection as the standard $p$--braid
$(\sigma_1\ldots\sigma_{p-1})^q$ projection of the torus knot or link
$T(p,q)$. Thus, $ c(W(p,q)) = q(p-1).$ For example, $W(5,4)$ is the
closure of the $5$--braid in Figure \ref{fig:W54}.

\begin{figure}
  \includegraphics[scale=0.15]{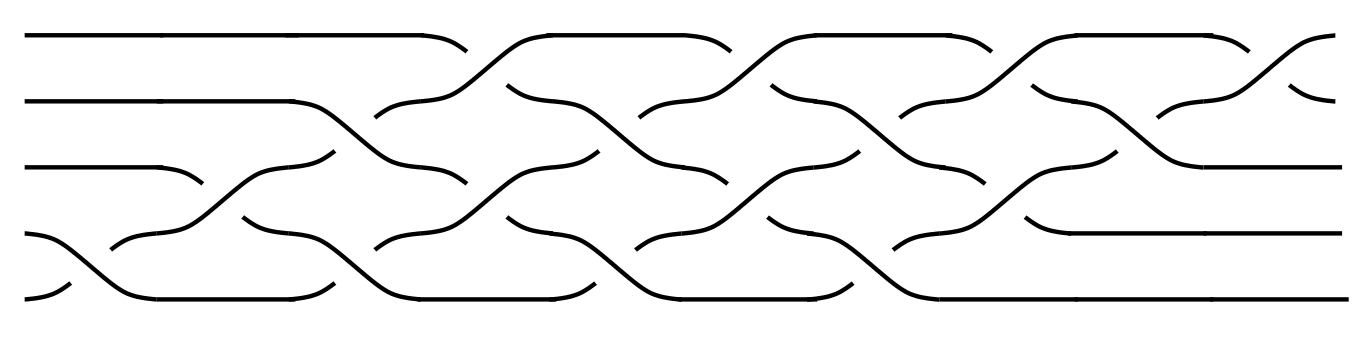}
  \caption{$W(5,4)$ is the closure of this braid.}
  \label{fig:W54}
\end{figure}

Theorems~\ref{thm:geommax} and \ref{thm:diagmax} imply that any
sequence of knots $W(p,q)$, with $p,q\to\infty$, is both geometrically
and diagrammatically maximal.  In \cite{ckp:weaving}, we provide
asymptotically sharp, explicit bounds on volumes in terms of $p$ and
$q$ alone.  Moreover, applying these asymptotically sharp bounds, we
prove in \cite{ckp:weaving} that as $p,q \to \infty$, $S^3-W(p,q)$
approaches $\R^3-\W$ as a geometric limit.  Proving that a class of
knots or links approaches $\R^3-\W$ as a geometric limit seems to be
difficult in general.  It is unknown, for example, whether all the
links of Theorem~\ref{thm:geommax} approach $\R^3-\W$ as a geometric
limit, and the proof of that theorem does not give this information.

\subsection{\bf Spectra for volume and determinant density} 
\label{sec:spectra}

We describe a more general context for Theorems
\ref{thm:geommax} and \ref{thm:diagmax}.

\begin{define}\label{def:spec}
  Let $\mathcal{C}_{\rm vol} = \{\vol(K)/c(K)\}$ and $\mathcal{C}_{\rm
    det}= \{2\pi\log\det(K)/c(K)\}$ be the sets of respective
  densities for all hyperbolic links $K$.  We define $\specvol=
  \mathcal{C}'_{\rm vol}$ and $\specdet=\mathcal{C}'_{\rm det}$ as
  their derived sets (set of all limit points).
\end{define}

The upper bound in \eqref{eq:dylan} was subsequently improved in \cite{AdamsBound} for any hyperbolic link $K$ with $c(K)\geq 5$.
Combining the lower bound in \cite{lackenby, AgolStormThurston} and the upper bound in \cite{AdamsBound}, we get the best current volume bounds for a knot or link $K$ with a prime alternating twist--reduced diagram with no bigons and $c(K)\geq 5$ crossings:
\begin{equation}\label{bestbound}
\frac{\voct}{2}\, (c(K)-2) \: \leq \: \vol(K) \: \leq \: \voct\,(c(K)-5) + 4\vtet.
\end{equation}
Here $\vtet \approx 1.01494$ is the volume of a regular ideal tetrahedron.

The upper bound in \eqref{bestbound} shows that the volume density of
any link is strictly less than $\voct$. Together with Conjecture
\ref{conj:diagmax}, this implies:
\[ \specvol, \ \specdet \subset [0,\voct] \]
For infinite sequences of alternating links without bigons,
equation~\eqref{bestbound} implies that $\specvol$ restricted 
to such links lies in $[\voct/2,\,\voct]$.

Twisting on two strands of an alternating link gives $0$ as a limit
point of both densities. Thus, we obtain the following corollary of Theorems
\ref{thm:geommax} and \ref{thm:diagmax}:
\begin{corollary}\label{cor:v8}
$\{0, \voct\} \subset \specvol \cap \specdet$.
\end{corollary}

Although $\voct$ does not occur as a volume density of any finite link,
$\voct$ is the volume density of $\W$ (see Remark \ref{rmk:v8}).
Corollary~3.7 of~\cite{ckp:weaving} also shows that $2\vtet \in \specvol$. 
It is an interesting problem to understand the sets $\specvol$,
$\specdet$, and $\specvol \cap \specdet$, and to explicitly describe
and relate their elements.

In fact, since a preprint of this paper was posted, Burton \cite{burton} and Adams et.\ al.\ \cite{Adams_density} proved that $\specvol=[0,\voct]$ and $[0,\voct] \subset \specdet$, hence $\specvol\cap\specdet = [0,\voct]$. Adams et.\ al.\ \cite{Adams_density} also showed that for any $x\in [0,\voct]$, there exists a sequence of knots $K_n$ with $x$ as a common limit point of both the volume and determinant densities of $K_n$.

\subsection{Knot determinant and hyperbolic volume}\label{sec:detvol}

There is strong experimental evidence in support of a conjectured relationship between the hyperbolic volume and the determinant of a knot, which was first observed in Dunfield's prescient online post \cite{Dunfield_website}. 
A quick experimental snapshot can be obtained from SnapPy~\cite{snappy} or Knotscape~\cite{knotscape}, which provide this data for all knots with at most $16$ crossings.  The top nine knots in this census sorted by maximum volume and by maximum determinant agree, but only set-wise!
More data and a broader context is provided by Friedl and Jackson \cite{FriedlJackson}, and Stoimenow \cite{stoimenow}.
In particular, Stoimenow \cite{stoimenow} proved there are constants $C_1,\, C_2 > 0$, such that for any hyperbolic alternating link $K$, 
\[ 2\cdot 1.0355^{\vol(K)} \: \leq \: \det(K) \: \leq \:  \left(\frac{C_1\,c(K)}{\vol(K)}\right)^{C_2\,\vol(K)} \]

Experimentally, we discovered the following surprisingly simple relationship between the two quantities that arise in the volume and determinant densities.  We have verified the following conjecture for all alternating knots up to 16 crossings, and weaving knots and links for $3\leq p\leq 50$ and $2\leq q\leq 50$.
\begin{conjecture}[Vol-Det Conjecture]\label{conj:detvol}
For any alternating hyperbolic link $K$, 
\[ \vol(K) < 2\pi\log\det(K). \]
\end{conjecture}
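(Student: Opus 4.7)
The plan is to prove $\vol(K) < 2\pi \log \det(K)$ by exploiting the identity $\det(K) = \tau(G(K))$, the number of spanning trees of the Tait graph of an alternating link $K$, and combining the best known upper bounds on $\vol$ with corresponding lower bounds on $\tau$. Reducing via connect sum (additivity of volume and multiplicativity of determinant) one may assume $K$ has a prime, twist-reduced alternating diagram $D$ with twist number $t(D)$ and twist-region sizes $a_1, \ldots, a_{t(D)}$ summing to $c(K)$.

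I would split into two cases. When $c(K)/t(D)$ is large (bigon-rich), invoke the Lackenby-Agol-Storm-Thurston upper bound $\vol(K) < 10 v_3 (t(D)-1)$. Simultaneously, each twist region should contribute a factor at least linear in $a_i$ to $\tau(G(K))$, either directly as parallel-edge multiplicity (for one checkerboard choice) or via an electrical-network series reduction (for the other). Combining these and applying Jensen's inequality to $\sum \log a_i$ yields $2\pi \log \det(K) \geq 2\pi \, t(D) \log(c(K)/t(D))$, which dominates $10 v_3 \, t(D)$ once $c(K)/t(D) \geq e^{5 v_3/\pi} \approx 5$, closing this case.

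When $c(K)/t(D)$ is small, including the bigon-free limiting case $t(D) = c(K)$, use the Adams upper bound $\vol(K) \leq v_8(c(K)-5) + 4 v_3$ from \eqref{Adamsbound} and aim for a matching asymptotic lower bound $2\pi \log \det(K) \geq v_8\, c(K) - C$ with $C < 5 v_8 - 4 v_3$. The natural route is Kenyon's local formula for spanning-tree entropy on planar graphs, which expresses $\log \tau(G)$ as a Kasteleyn-matrix integral whose per-edge density equals $v_8/(2\pi)$ for the infinite square lattice, with boundary deviations controlled by perimeter terms. For a Tait graph that is $2$-connected and whose boundary is bounded via the F\o lner-type framework underlying Theorem \ref{thm:diagmax}, one expects an absolutely bounded error.

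The main obstacle is precisely this second case. The desired lower bound on $\log \det$ lies just below the conjectural upper bound of Conjecture \ref{conj:diagmax}, leaving essentially no slack, and for planar graphs that are far from square-lattice-like (for instance, Tait graphs that are locally triangulated) the spanning-tree entropy per edge is strictly smaller than $v_8/(2\pi)$. A generic Kenyon-style argument therefore cannot close this case uniformly, and one may need a genuine identity relating the Rivin angle-structure volume functional of $S^3 - K$ directly to the Laplacian determinant of $G(K)$, perhaps through the common K-theoretic interpretation of hyperbolic volume and Mahler measure as Beilinson regulators.
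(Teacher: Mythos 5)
This statement is Conjecture~\ref{conj:detvol}: the paper does not prove it. The authors offer only experimental evidence (verification for all alternating knots up to $16$ crossings and for weaving knots and links with $3\leq p\leq 50$, $2\leq q\leq 50$), together with Corollary~\ref{cor:detvol} showing that the constant $2\pi$ could not be improved. So there is no proof in the paper to compare yours against, and any complete argument you produced would be new mathematics. Your proposal, however, is not a complete argument, and you say so yourself: the second case is left open, and your closing paragraph is a description of why your strategy cannot close it rather than a proof that it does.

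Beyond the admitted gap, there are two concrete problems. First, in the bigon-rich case you apply Jensen's inequality in the wrong direction: for the concave function $\log$, Jensen gives $\sum_i \log a_i \leq t(D)\log\bigl(c(K)/t(D)\bigr)$, an \emph{upper} bound, whereas you need a lower bound on $\sum_i \log a_i$. If all but one twist region is a single crossing, $\sum_i\log a_i\approx \log\bigl(c(K)-t(D)\bigr)$, which does not dominate $10v_3\,(t(D)-1)$ for large $t(D)$ even when $c(K)/t(D)$ is large; so the case split by the ratio $c(K)/t(D)$ does not isolate the diagrams your argument handles. Second, and more fundamentally, the strategy of pairing a \emph{universal} volume upper bound (the Adams bound $v_8(c-5)+4v_3$, which is within an additive constant of $v_8\,c$) against a \emph{universal} determinant lower bound forces you to prove $2\pi\log\det(K)\geq v_8\,c(K)-C$ for all bigon-free prime alternating links. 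That is within an additive constant of the conjectural upper bound of Conjecture~\ref{conj:diagmax}, i.e.\ it asserts every such link is essentially diagrammatically maximal, which is false: Tait graphs far from the square lattice (e.g.\ locally triangulated ones) have spanning-tree entropy per edge strictly below $2{\rm C}/\pi = v_8/2\pi$, so $\log\det(K)$ grows like $h\,c(K)$ with $2\pi h<v_8$. The conjecture survives only because the volume of such links is presumably also well below $v_8\,c(K)$; any proof must couple the two quantities rather than bounding each against $c(K)$ separately, which is exactly the difficulty the authors leave open.
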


Conjectures \ref{conj:detvol} and \ref{conj:diagmax} would imply one direction
of Question \ref{Q:max}, that any geometrically maximal sequence of knots is diagrammatically maximal.
In contrast, we can obtain $K_n$ by twisting on two strands, such that $\vol(K_n)$ is bounded but $\det(K_n)\to\infty$.

Our main results imply that the constant $2\pi$ in Conjecture~\ref{conj:detvol} is sharp:
\begin{corollary}\label{cor:detvol}
If $\alpha<2\pi$ then there exist alternating hyperbolic knots $K$ such that\\ $\alpha\log\det(K)<\vol(K)$.
\end{corollary}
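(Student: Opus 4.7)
The plan is to exhibit an explicit sequence of knots that is simultaneously geometrically and diagrammatically maximal, and then extract the constant $2\pi$ as a ratio of the two densities. The natural candidate is the family of weaving knots $W(p,q)$: by the last paragraph of the weaving knot discussion, Theorems~\ref{thm:geommax} and \ref{thm:diagmax} both apply to any sequence $W(p_n,q_n)$ with $p_n,q_n\to\infty$, so for such a sequence
\[
\frac{\vol(W(p_n,q_n))}{c(W(p_n,q_n))}\longrightarrow v_8,
\qquad
\frac{2\pi\log\det(W(p_n,q_n))}{c(W(p_n,q_n))}\longrightarrow v_8.
\]

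To make sure the output is a \emph{knot} (rather than a multi-component link), I would choose $(p_n,q_n)$ with $\gcd(p_n,q_n)=1$ and $p_n,q_n\to\infty$; for instance, take $p_n$ prime and $q_n=p_n+1$. Write $K_n=W(p_n,q_n)$. Since the limit of $2\pi\log\det(K_n)/c(K_n)$ is $v_8>0$, we have $\log\det(K_n)>0$ for all sufficiently large $n$, so the quotient $\vol(K_n)/\log\det(K_n)$ is well defined and positive for large $n$. Dividing the two limits above gives
\[
\lim_{n\to\infty}\frac{\vol(K_n)}{\log\det(K_n)} \;=\; \lim_{n\to\infty}\frac{\vol(K_n)/c(K_n)}{\log\det(K_n)/c(K_n)} \;=\; \frac{v_8}{v_8/(2\pi)} \;=\; 2\pi.
\]

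Now fix $\alpha<2\pi$. By the limit above, there exists $N$ such that for all $n\geq N$ one has $\vol(K_n)/\log\det(K_n)>\alpha$, equivalently $\alpha\log\det(K_n)<\vol(K_n)$. Any such $K_n$ is the desired knot $K$.

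The only potential obstacle is a bookkeeping one: we need to know that Theorems~\ref{thm:geommax} and \ref{thm:diagmax} actually apply to our chosen sequence, which requires verifying the F\o lner and cycle-of-tangles hypotheses for weaving knots. This is precisely the content of the remark following the statement of Theorem~\ref{thm:lower-bound}, which asserts that any sequence $W(p,q)$ with $p,q\to\infty$ satisfies the hypotheses of both theorems. Given that, the corollary reduces to the trivial observation that a sequence converging to $2\pi$ eventually exceeds any $\alpha<2\pi$.
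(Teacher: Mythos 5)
Your proposal is correct and is essentially the paper's own argument: the paper's proof likewise takes a sequence of knots that is both geometrically and diagrammatically maximal (weaving knots being the explicit instance) and compares the two density limits to conclude $\alpha\log\det(K_n)<\vol(K_n)$ for large $n$. Your added care about choosing $\gcd(p_n,q_n)=1$ to get knots rather than links, and about $\log\det(K_n)>0$ before dividing limits, are reasonable bookkeeping but do not change the substance.
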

\begin{proof}
Let $K_n$ be a sequence of knots that is both geometrically and diagrammatically maximal.
Then $\displaystyle \lim_{n\to\infty}\frac{\alpha\log\det(K_n)}{c(K_n)} = \alpha\, \voct/2\pi < \voct$ and $\displaystyle \lim_{n\to\infty}\frac{\vol(K_n)}{c(K_n)}=\voct.$
Hence, for $n$ sufficiently large, $\alpha\log\det(K_n)<\vol(K_n)$.
\end{proof}

Our focus on geometrically and diagrammatically maximal knots and links naturally emphasizes the importance of alternating links. Every non-alternating link can be viewed as a modification of a diagram of an alternating link with the same projection, by changing crossings. This modification affects the determinant as follows.


\begin{prop}\label{prop:det_alt}
Let $K$ be a reduced alternating link diagram, and let $K'$ be obtained by changing any proper subset of crossings of $K$.  Then
\[ \det(K') < \det(K). \]
\end{prop}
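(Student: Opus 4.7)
The plan is to express the determinant as a signed spanning--tree sum on the Tait graph, and then apply the triangle inequality.  Let $G = G(K)$ denote the (checkerboard) Tait graph of the projection, with one vertex per white region and one edge per crossing.  Each crossing determines a sign $\eta(e) \in \{\pm 1\}$ on the corresponding edge $e$, recording the checkerboard type of the crossing.  Applying the weighted matrix--tree theorem to the Goeritz matrix shows that, for any link diagram $L$ with signed Tait graph $(G, \eta)$,
\[
\det(L) \;=\; \Bigl|\, \sum_{T \in \mathcal{T}(G)} \prod_{e \in T} \eta(e) \,\Bigr|,
\]
where $\mathcal{T}(G)$ is the set of spanning trees of $G$.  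Since $K$ is alternating, we normalize so that $\eta \equiv +1$; then every spanning--tree term equals $+1$ and $\det(K) = |\mathcal{T}(G)| =: \tau(G)$.

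Now $K'$ has the same underlying projection as $K$, hence the same underlying graph $G$, but its sign function is $\eta_S(e) = -1$ precisely on the edges $e \in S$ corresponding to the flipped crossings.  The triangle inequality immediately gives
\[
\det(K') \;=\; \Bigl|\, \sum_T \prod_{e \in T} \eta_S(e) \,\Bigr| \;\leq\; \sum_T 1 \;=\; \tau(G) \;=\; \det(K),
\]
and the task reduces to proving strictness.  Equality would force the signs $\varepsilon(T) := \prod_{e \in T} \eta_S(e) \in \{\pm 1\}$ to agree over all spanning trees $T$.  But if $T$ and $T' = (T \setminus \{f\}) \cup \{e\}$ differ by a single swap along the fundamental cycle through $e$ and $f$, then $\varepsilon(T')/\varepsilon(T) = \eta_S(e)\eta_S(f)$, forcing $\eta_S(e) = \eta_S(f)$.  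Thus $\eta_S$ would have to be constant on every pair of edges lying in a common cycle of $G$.

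The proof is completed by a graph-theoretic input: since $K$ is \emph{reduced} alternating, its Tait graph $G$ has no nugatory loops, and by planar duality between the two checkerboard graphs (a nugatory crossing of the other type appears as a loop in the dual graph, i.e.\ a bridge in $G$) $G$ also has no bridges.  Hence $G$ is $2$-edge-connected, so any two edges lie in a common cycle, and $\eta_S$ is forced to be globally constant on $E(G)$.  This means $S = \varnothing$ or $S$ is the full set of crossings, both excluded by the hypothesis that $S$ is a nontrivial proper subset.  Therefore $\det(K') < \det(K)$.

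I expect the main obstacle to be the graph-theoretic step establishing $2$-edge-connectivity of the Tait graph --- in particular, the planar-duality argument ruling out bridges from reducedness.  An auxiliary point to verify carefully is the sign convention, namely that flipping a crossing negates the corresponding edge sign in the Goeritz matrix; this is needed both for the formula $\det(K') = |\sum_T \prod_{e \in T} \eta_S(e)|$ and for the sanity check that $S$ equal to the full set of crossings recovers $\det(K)$, matching the fact that the mirror has the same determinant.
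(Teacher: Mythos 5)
Your proposal is, at its core, the same argument as the paper's: the identity $\det(L)=\bigl|\sum_T\prod_{e\in T}\eta(e)\bigr|$ is exactly Lemma~\ref{det_st} (rewritten using $\prod_{e\in T}\eta(e)=\pm(-1)^{\sigma(T)}$, the global sign being constant over spanning trees), and your equality analysis via the swap $T\mapsto(T\setminus\{f\})\cup\{e\}$ along a fundamental cycle is the same mechanism the paper uses to show that $s_\sigma\neq 0$ forces $s_{\sigma+1}\neq 0$ or $s_{\sigma-1}\neq 0$. The only genuinely different packaging is that you express the cancellation through the triangle inequality rather than through the parity of $\sigma$.

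That said, the step you yourself flagged as the likely obstacle is wrong as you wrote it. Two-edge-connectivity does \emph{not} imply that any two edges lie on a common cycle: two triangles sharing a single vertex form a loopless, bridgeless connected graph in which an edge of one triangle and an edge of the other lie on no common cycle. Two edges lie on a common cycle precisely when they belong to the same block, so what your argument actually requires is $2$-vertex-connectivity of the Tait graph, which corresponds to primality of the diagram, not merely reducedness. This is not a defect you introduced --- the paper's proof asserts the identical claim (``since there are no bridges or loops, every pair of edges is contained in a cycle''), and indeed the proposition as literally stated fails without primality: the reduced alternating $6$-crossing diagram of the granny knot has Tait graph a one-vertex wedge of two blocks, and switching the three crossings of one trefoil summand yields the square knot, with determinant $9$ on both sides. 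Once primality (equivalently, $2$-connectivity of $G$) is added to the hypotheses, your common-cycle claim is valid, and the one small step you leave implicit --- realizing a prescribed cycle $C$ through $e$ and $f$ as the fundamental cycle ${\rm cyc}(T,e)$ by extending the path $C\setminus\{e\}$ to a spanning tree $T$ --- is routine, so the rest of your argument goes through.
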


What happens to volume under this modification? 
Motivated by Proposition~\ref{prop:det_alt}, the first two authors previously conjectured that alternating diagrams also maximize hyperbolic volume in a given projection. They have verified part (a) of the following conjecture for all alternating knots up to 18 crossings ($\approx 10.7$ million knots).

\begin{conjecture}\label{conj:alt}
(a) Let $K$ be an alternating hyperbolic knot, and let $K'$ be obtained by changing any crossing of $K$.  Then
\[ \vol(K') < \vol(K). \]
(b) The same result holds if $K'$ is obtained by changing any proper subset of crossings of $K$.
\end{conjecture}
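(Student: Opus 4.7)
The plan is to prove part~(a) first and then lift to part~(b) by induction on $|S|$. For part~(a), let $K$ be alternating with reduced prime twist-reduced diagram $D$, let $c$ be the crossing to be changed, and let $K'$ be the result. Realize the crossing change by $\tfrac{1}{2}$-surgery on a small unknotted circle $C$ encircling the two strands of $c$; writing $L = K \cup C$, both $S^3 - K$ and $S^3 - K'$ arise as Dehn fillings of the hyperbolic manifold $M = S^3 - L$, along the meridian and the $\tfrac{1}{2}$-slope on the cusp of $C$ respectively. By Menasco, the checkerboard surfaces $B,W$ of $D$ are essential in $S^3 - K$, and Lackenby's guts estimate combined with Agol--Storm--Thurston gives
\[ \vol(S^3 - K) \geq \tfrac{v_8}{2}\, |\chi(\guts(S^3 - K \cut B))|, \]
which grows linearly in $c(K)$.

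The main step is to track how the crossing change affects the sutured manifold decomposition. The diagram $D'$ of $K'$ shares its underlying $4$-valent shadow with $D$, so there are corresponding checkerboard surfaces $B',W'$ in $S^3 - K'$. I would show that one of them, say $B'$, acquires a new essential annulus localized at $c$, so that $c$ is absorbed into the $I$-bundle part of the sutured decomposition for $D'$; consequently $|\chi(\guts(S^3 - K' \cut B'))| \leq |\chi(\guts(S^3 - K \cut B))| - 1$. To close the argument, I would need a matching upper bound for $\vol(K')$---either via Thurston's Dehn filling inequality applied to $M$, Adams' bound \eqref{Adamsbound}, or a fresh octahedral decomposition tailored to $B'$---strong enough to imply $\vol(K') < \vol(K)$.

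The principal obstacle is the factor-of-two gap between the $\tfrac{v_8}{2}$ in the guts lower bound and the $v_8$ in the upper bound \eqref{Adamsbound}: a one-unit decrease in $|\chi(\guts)|$ only drops the lower bound by $\tfrac{v_8}{2}$, which is insufficient to dominate an equally weak upper bound. This is precisely the same gap that prevents the sandwich \eqref{bestbound} from being sharp at both ends. A plausible resolution is to strengthen the lower bound for $\vol(K)$ using Rivin's convexity of volume, as in the proof of Theorem~\ref{thm:lower-bound}, on Menasco's checkerboard polyhedral decomposition of $S^3 - K$: the alternating condition provides a canonical angle structure whose volume functional value should strictly exceed $\tfrac{v_8}{2}\,|\chi(\guts)|$ and in fact approach the true volume, while the modified decomposition for $K'$ admits only strictly smaller values of the volume functional.

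For part~(b), once part~(a) is established I would induct on $|S|$, the size of the proper subset of crossings being flipped. The difficulty is that after some intermediate changes the diagram is no longer alternating, so (a) cannot be reapplied naively. I would therefore try to prove in parallel with (a) a local strengthening: $\vol$ strictly decreases under a crossing change at any crossing $c$ at which both checkerboard surfaces remain locally essential and contribute to the guts. This local criterion should be preserved by judiciously chosen earlier flips in $S$, so one can order the crossings of $S$ so that each change satisfies the hypothesis of the strengthened (a), completing the induction.
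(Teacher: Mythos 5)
This statement is Conjecture~\ref{conj:alt} in the paper: it is an open problem, not a theorem. The authors offer no proof --- only computational verification of part~(a) for all alternating knots up to $18$ crossings --- so there is no argument of theirs to compare yours against. What you have written is a research plan with gaps that you partly acknowledge, and those gaps are genuine and, as the plan stands, fatal. First, a small point: a single crossing change is realized by $\pm 1$ surgery on the crossing circle (one full twist cancels the crossing via a Reidemeister~II move), not by $\tfrac12$-surgery. More seriously, your ``main step'' establishes the wrong inequality. Showing that $|\chi(\guts(S^3 - K'\cut B'))|$ drops by one only \emph{lowers the lower bound} for $\vol(K')$; it says nothing about $\vol(K')$ itself. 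You need an \emph{upper} bound on $\vol(K')$ that beats the \emph{lower} bound on $\vol(K)$, and every known upper bound (Thurston's octahedral bound \eqref{eq:dylan}, Adams' bound \eqref{Adamsbound}, Lackenby's twist-number bound \eqref{ASTbound}) exceeds every known lower bound by roughly a factor of two per crossing --- the gap visible in \eqref{bestbound}. A one-unit change in $\chi(\guts)$ moves the lower bound by $\tfrac{v_8}{2}$, which is swamped by that gap. Your proposed repair --- an angle structure on Menasco's checkerboard polyhedra whose volume is provably close to the true volume --- is not known to exist for general alternating links (the explicit angle structures in Section~\ref{sec:lowerbounds} exploit the very special braid structure of $W(p,q)$ and an auxiliary braid-axis cusp), and even granting it, you would still need a matching upper bound for the non-alternating $K'$, whose checkerboard surfaces need not be essential and which may not even be hyperbolic (e.g., $K'$ can be the unknot or a torus knot, where the conjecture holds vacuously but your machinery does not apply).

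Two further structural problems. Realizing both $S^3-K$ and $S^3-K'$ as fillings of $M = S^3 - (K\cup C)$ gives $\vol(K) < \vol(M)$ and $\vol(K') < \vol(M)$, but there is no general monotonicity principle comparing two different fillings of the same cusp, so the surgery description by itself cannot order the two volumes; and as the paper notes after the conjecture, $\vol(K)-\vol(K')$ can be an arbitrarily small positive number, so no argument built from bounds with fixed additive or multiplicative slack can succeed --- the proof, if one exists, must be exact. For part~(b), your induction collapses at the first step for the reason you identify: the intermediate diagrams are non-alternating, and the ``local strengthening'' you would need (strict volume decrease at any crossing where both checkerboard surfaces remain locally essential) is at least as hard as the conjecture itself and is likewise open. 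In short, you have correctly located where the difficulty lies, but nothing in the proposal overcomes it; the statement remains a conjecture.
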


Note that by Thurston's Dehn surgery theorem, the volume converges from below
when twisting two strands of a knot, so $\vol(K) - \vol(K')$ can be an
arbitrarily small positive number.

A natural extension of Conjecture~\ref{conj:detvol} to any hyperbolic
knot is to replace the determinant with the rank of the reduced
Khovanov homology $\widetilde{H}^{*,*}(K)$.  
Let $K$ be an alternating hyperbolic knot, and let $K'$ be obtained by changing any proper subset of crossings of $K$.
It follows from results in \cite{KH} that 
\[ \det(K') \leq \text{rank}(\widetilde{H}^{*,*}(K'))\leq \det(K). \]
Conjectures~\ref{conj:detvol} and~\ref{conj:alt} would imply that
$\vol(K') < 2\pi\log\det(K)$, but using data from KhoHo \cite{KhoHo}
we have verified the following stronger conjecture for all
non-alternating knots with up to 15 crossings. 

\begin{conjecture}\label{conj:KHvol}
For any hyperbolic knot $K$, 
\[ \vol(K) < 2\pi\log\text{\rm rank}(\widetilde{H}^{*,*}(K)). \]
\end{conjecture}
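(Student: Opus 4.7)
The plan is to split the statement into the alternating and non-alternating cases and to identify exactly what must be shown beyond the earlier conjectures. For an alternating hyperbolic knot $K$, the Khovanov-homology results cited from \cite{KH} give $\mathrm{rank}(\widetilde{H}^{*,*}(K)) = \det(K)$ (alternating implies $H$-thin, so the two outer inequalities $\det(K') \le \mathrm{rank}(\widetilde{H}^{*,*}(K')) \le \det(K)$ collapse when $K' = K$). So in this case Conjecture~\ref{conj:KHvol} coincides with Conjecture~\ref{conj:detvol}. The essential content is therefore the non-alternating case.

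Given a non-alternating hyperbolic knot $K'$, fix a diagram of $K'$ and let $K$ be the alternating knot obtained by flipping a proper subset $S$ of its crossings. Conjectures~\ref{conj:alt} and~\ref{conj:detvol} together give $\vol(K') < \vol(K) < 2\pi\log\det(K)$, while \cite{KH} supplies $\mathrm{rank}(\widetilde{H}^{*,*}(K')) \le \det(K)$. These combine only to yield the weaker bound $\vol(K') < 2\pi\log\det(K)$, because the Khovanov rank inequality points the wrong way for a direct chain. So any proof must simultaneously exploit the volume drop $\vol(K)-\vol(K')$ and the rank drop $\det(K)/\mathrm{rank}(\widetilde{H}^{*,*}(K'))$.

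I would aim to establish a quantitative, crossing-by-crossing refinement of Conjecture~\ref{conj:alt}: along a flip sequence $K = K_0, K_1, \dots, K_m = K'$, prove for each $i$ the per-step inequality
\[
\vol(K_{i-1}) - \vol(K_i) \;\geq\; 2\pi\bigl(\log\mathrm{rank}(\widetilde{H}^{*,*}(K_{i-1})) - \log\mathrm{rank}(\widetilde{H}^{*,*}(K_i))\bigr).
\]
Telescoping this, using $\mathrm{rank}(\widetilde{H}^{*,*}(K)) = \det(K)$, and combining with Conjecture~\ref{conj:detvol} applied to $K$ would yield Conjecture~\ref{conj:KHvol} for $K'$. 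On the Khovanov side, each single crossing change fits into an unoriented skein long exact sequence relating the ranks at $K_{i-1}$, $K_i$, and the two resolutions at the flipped crossing, which is the natural tool for controlling the homological drop. On the geometric side, one can bound the per-step volume drop from below using Agol-Storm-Thurston guts estimates in the spirit of the proof of Theorem~\ref{thm:geommax}, together with Thurston's Dehn surgery estimates applied to the twist region containing the flipped crossing.

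The hardest part will be a mismatch of scales. Thurston surgery makes the per-step volume drop arbitrarily small when the flipped crossing lies deep in a long twist region, while the corresponding Khovanov rank can drop substantially in the same step, so a naive crossing-by-crossing induction is doomed. The most promising remedy is to group crossing changes by twist region, using Lackenby's twist-number bound to control the total volume change per region and thinness-type torsion results to control the total rank change per region, then verify the grouped inequality globally. A further difficulty is that the non-alternating slack $\mathrm{rank}(\widetilde{H}^{*,*}(K'))/\det(K')$ is poorly understood quantitatively, and it is precisely this slack that Conjecture~\ref{conj:KHvol} exploits; controlling it may require genuinely new input, perhaps via Lee's spectral sequence or via the colored Jones polynomial through the volume conjecture.
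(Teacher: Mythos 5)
This statement is a conjecture in the paper, not a theorem: the authors offer no proof of Conjecture~\ref{conj:KHvol}. Their only support for it is computational verification (using KhoHo data for all non-alternating knots with up to 15 crossings), together with the observation that Conjectures~\ref{conj:detvol} and~\ref{conj:alt}, combined with the inequality $\det(K') \leq \mathrm{rank}(\widetilde{H}^{*,*}(K')) \leq \det(K)$ from \cite{KH}, would yield only the weaker bound $\vol(K') < 2\pi\log\det(K)$. Your diagnosis of exactly this point --- that the Khovanov rank inequality ``points the wrong way'' for a direct chain, so the conjecture genuinely exploits both the volume drop and the rank drop under crossing changes --- is accurate and matches the paper's own remark. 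To that extent your reduction of the alternating case to Conjecture~\ref{conj:detvol} (via thinness of alternating knots) is also consistent with how the paper frames things.

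However, what you present is a research program, not a proof, and you candidly identify why its central step fails: the proposed per-step telescoping inequality
\[
\vol(K_{i-1}) - \vol(K_i) \;\geq\; 2\pi\bigl(\log\mathrm{rank}(\widetilde{H}^{*,*}(K_{i-1})) - \log\mathrm{rank}(\widetilde{H}^{*,*}(K_i))\bigr)
\]
cannot hold crossing-by-crossing, since Thurston's Dehn surgery theorem makes the left side arbitrarily small for flips deep in a twist region while the right side need not be. The grouped-by-twist-region repair you sketch is not carried out and would require quantitative control of $\det(K')/\mathrm{rank}(\widetilde{H}^{*,*}(K'))$ for non-alternating knots that, as you note, is not available. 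So there is no proof here to compare against the paper's --- the paper has none either --- and the statement should be treated as open, resting on the (also open) Conjectures~\ref{conj:detvol} and~\ref{conj:alt} plus experimental evidence.
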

\noindent
Note that Conjecture~\ref{conj:detvol} is a special case of Conjecture~\ref{conj:KHvol}.

\subsection{Organization}
In Section~\ref{sec:diag}, we give the proof of Theorem~\ref{thm:diagmax}. 
We discuss the geometry of the infinite weave $\W$ in Section~\ref{sec:weave}.  
In Section~\ref{sec:guts}, we begin the proof of Theorem~\ref{thm:geommax} by proving that these links have volumes bounded below by the volumes of certain right-angled hyperbolic polyhedra.
In Section~\ref{sec:vollims}, we complete the proof of Theorem~\ref{thm:geommax} essentially using the rigidity of circle patterns associated to the right-angled polyhedra.
We will assume throughout that our links are non-split.

\subsection{Acknowledgments} We thank Ian Agol for sharing his ideas for the proof that closures of subsets of $\W$ are geometrically maximal, and for other helpful conversations.  We also thank Craig Hodgson for helpful conversations.  We thank Marc Culler and Nathan Dunfield for help with SnapPy~\cite{snappy}, which has been an essential tool for this project.  The first two authors acknowledge support by the Simons Foundation and PSC-CUNY.  The third author acknowledges support by the National Science Foundation under grant number DMS--1252687, and the Australian Research Council under grant DP160103085. 

\section{Diagrammatically maximal knots and spanning trees}\label{sec:diag}
In this section, we first give the proof of Theorem~\ref{thm:diagmax}, then discuss conjectures related to Conjecture~\ref{conj:diagmax}.

For any connected link diagram $K$, we can associate a connected graph $G_K$, called the Tait graph of $K$, by checkerboard coloring complementary regions of $K$, assigning a vertex to every shaded region, an edge to every crossing and a $\pm$ sign to every edge as follows:
\[
\includegraphics[height=1cm]{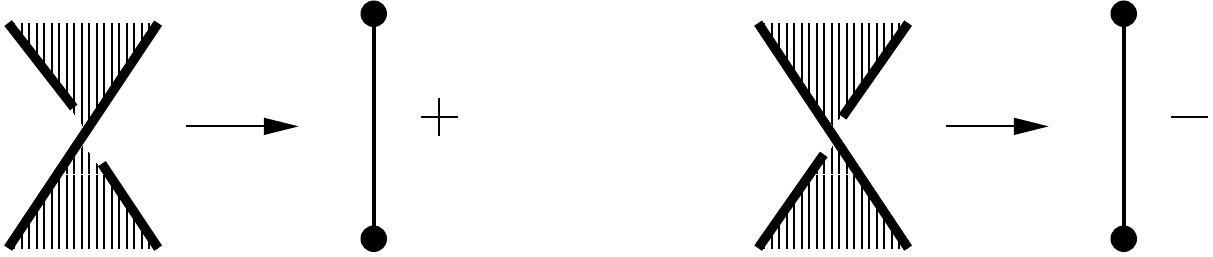}
\]
Thus, $e(G_K)=c(K)$, and the signs on the edges are all equal if and only if $K$ is alternating. So any alternating knot or link $K$ is determined up to mirror image by its unsigned Tait graph $G_K$.  

Let $\tau(G_K)$ denote the number of spanning trees of $G_K$. For any alternating link, $\tau(G_K)=\det(K)$, the determinant of $K$.  More generally, for links including non-alternating links, we have the following.

\begin{lemma}[\cite{qatwist}]\label{det_st}
For any spanning tree $T$ of $G_K$, let $\sigma(T)$ be the number of positive edges in $T$.  Let $s_\sigma(K)=\#\{$spanning trees $T$ of $G_K\ |\ \sigma(T)=\sigma\}$.  Then
\[ \det(K) = \left|\sum_\sigma (-1)^\sigma\, s_\sigma(K) \right|. \]
\end{lemma}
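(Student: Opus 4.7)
The plan is to realize $\det(K)$ as the absolute value of the determinant of a signed Laplacian associated to the Tait graph $G_K$, and then to invoke the weighted matrix-tree theorem to convert that determinant into a signed sum over spanning trees.

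First, I would recall that for a connected link diagram $K$, the determinant can be computed from the (reduced) Goeritz matrix of the checkerboard coloring used to build $G_K$. Under this coloring the Goeritz matrix coincides with the reduced signed Laplacian $L'$ of $G_K$, where the edge weights are the signs $\epsilon_e\in\{\pm 1\}$ dictated by the paper's Tait graph sign convention. Concretely, $L' = D - A^{\epsilon}$ with $A^{\epsilon}_{ij} = \sum_{e : v_i \sim v_j} \epsilon_e$ and $D_{ii} = \sum_{e \ni v_i} \epsilon_e$, and with one row and column deleted. The classical Goeritz identity $\det(K) = |\det L'|$ is the starting point; the one thing one must verify is that the Tait graph sign convention matches the sign attached to each crossing in the Goeritz matrix, which is a direct comparison of the two $\pm$ conventions.

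Next, apply the weighted matrix-tree theorem to $L'$:
\[
\det(L') \;=\; \sum_{T}\; \prod_{e \in T} \epsilon_e,
\]
where the sum is over all spanning trees $T$ of $G_K$. Since each $\epsilon_e=\pm 1$, the product equals $(-1)^{n(T)}$, where $n(T)$ is the number of negative edges of $T$. Each spanning tree has $|V(G_K)|-1$ edges, so $n(T) = (|V(G_K)|-1) - \sigma(T)$ and therefore $(-1)^{n(T)} = (-1)^{|V(G_K)|-1}(-1)^{\sigma(T)}$. Pulling the global sign out and grouping spanning trees by the value of $\sigma$:
\[
\det(L') \;=\; (-1)^{|V(G_K)|-1} \sum_T (-1)^{\sigma(T)} \;=\; (-1)^{|V(G_K)|-1} \sum_\sigma (-1)^\sigma\, s_\sigma(K).
\]
Taking absolute values gives the stated formula.

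The main obstacle is the sign bookkeeping needed to identify the Goeritz matrix with the signed Laplacian whose edge weights are exactly the $\pm$ labels on $G_K$ as defined in the paper; this is a short but genuinely checkable table comparing the checkerboard/crossing-type convention of the Goeritz matrix with the diagrammatic $\pm$ convention. Once this identification is in place, the rest is mechanical. A useful sanity check is the alternating case: then every edge has the same sign, so $\sigma(T) = |V(G_K)|-1$ for every spanning tree and the formula collapses to $\det(K) = \tau(G_K)$, recovering the known identity.
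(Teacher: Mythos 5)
Your proof is correct. The paper states Lemma~\ref{det_st} as a citation to \cite{qatwist} without reproving it, and your route --- identifying the reduced Goeritz matrix of the checkerboard coloring with the reduced signed Laplacian of $G_K$, applying the weighted matrix--tree theorem, and then trading the count of negative edges for $\sigma(T)$ via the fact that every spanning tree has $|V(G_K)|-1$ edges so that only a global sign $(-1)^{|V(G_K)|-1}$ appears --- is the standard argument and is essentially the one in the cited source, with the sign-convention match between the Tait-graph labels and the Goeritz crossing signs being the only point that genuinely needs the check you flag.
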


With this notation, we can prove Proposition~\ref{prop:det_alt} from the introduction.

\begin{named}{Proposition~\ref{prop:det_alt}}
Let $K$ be a reduced alternating link diagram, and $K'$ be obtained by changing any proper subset of crossings of $K$.  Then
\[ \det(K') < \det(K). \]
\end{named}

\begin{proof}
First, suppose only one crossing of $K$ is switched, and let $e$ be the corresponding edge of $G_K$, which is the only negative edge in $G_{K'}$.
Since $K$ has no nugatory crossings, $e$ is neither a bridge nor a loop.  
Hence, there exist spanning trees $T_1$ and $T_2$ such that $e\in T_1$ and $e\notin T_2$.
The result now follows by Lemma \ref{det_st}.

When a proper subset of crossings of $K$ is switched, by Lemma~\ref{det_st} it suffices to show that  if $s_\sigma(K)\neq 0$ then $s_{\sigma+1}(K)\neq 0$ or $s_{\sigma-1}(K)\neq 0$.  Since there are no bridges or loops, every pair of edges is contained in a cycle.  So for any spanning tree $T_1$ with $\sigma(T_1)<e(T_1)$, we can find a pair of edges $e_1$ and $e_2$ with opposite signs, such that $e_1\in {\rm{cyc}}(T_1,\,e_2)$, where recall ${\rm{cyc}}(T_1,\,e_2)$ is the set of edges in the unique cycle of $T_1\cup e_2$. It follows that $T_2 = (T_1-e_1)\cup e_2$ satisfies $\sigma(T_2)=\sigma(T_1)\pm 1$.
\end{proof}

We now show how Theorem~\ref{thm:diagmax} follows from previously known results about the asymptotic enumeration of spanning trees of finite planar graphs.  

\begin{theorem}\label{thm:diagmax2}
Let $H_n$ be any F{\o}lner sequence for the square grid, and let $K_n$ be any sequence of alternating links with corresponding Tait graphs $G_n\subset H_n$, such that
\[ \lim_{n\to\infty}\frac{\# \{x \in V(G_n):\; \deg(x)=4\} }{|H_n|} = 1, \]
where $V(G_n)$ is the set of vertices of $G_n$. Then
\[ \lim_{n\to\infty}\frac{2\pi\log\det(K_n)}{c(K_n)} = \voct. \]
\end{theorem}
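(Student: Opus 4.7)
The plan is to reduce Theorem~\ref{thm:diagmax2} to a classical asymptotic formula for spanning tree counts on F\o lner exhaustions of the square lattice $\Z^2$, then pass from the graphs $H_n$ of the F\o lner sequence to the slightly smaller subgraphs $G_n$.

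First, since each $K_n$ is alternating, we have $\det(K_n) = \tau(G_n)$, the number of spanning trees of the Tait graph, and $c(K_n) = e(G_n)$, the number of edges. By hypothesis, a $(1-o(1))$--fraction of the vertices of $H_n$ are degree--$4$ vertices of $G_n$, and all vertices of $G_n$ have degree at most $4$ since $G_n \subset G(\W)$. A handshake count then gives $e(G_n) = 2|H_n| + o(|H_n|)$, so the target limit is equivalent to
\[
\lim_{n\to\infty} \frac{\log \tau(G_n)}{|H_n|} \: = \: \frac{v_8}{\pi}.
\]

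Second, I would invoke the classical spanning tree entropy formula for $\Z^2$, proved by Burton--Pemantle and, with attention to boundary behavior, by Kenyon: for any F\o lner sequence $H_n$ exhausting $\Z^2$,
\[
\lim_{n\to\infty} \frac{\log \tau(H_n)}{|H_n|} \: = \: \frac{1}{(2\pi)^2}\int_0^{2\pi}\!\!\int_0^{2\pi} \log\bigl(4-2\cos\theta - 2\cos\varphi\bigr)\, d\theta\, d\varphi \: = \: \frac{4C}{\pi},
\]
where $C$ is Catalan's constant. Combined with the identity $v_8 = 8\Lambda(\pi/4) = 4C$ (where $\Lambda$ is the Lobachevsky function, so numerically $v_8 \approx 3.664 \approx 4C$), the right-hand side is exactly $v_8/\pi$.

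Third, and this is the main obstacle, I must transfer the asymptotic from $H_n$ to the subgraph $G_n$. The hypotheses guarantee that $G_n$ and $H_n$ differ by $o(|H_n|)$ vertices and edges, all concentrated in a region of vanishing density (the F\o lner boundary). The cleanest route is via Benjamini--Schramm convergence: the graphs $G_n$ converge in the local weak sense to $\Z^2$ rooted at the origin, so Lyons's theorem on asymptotic enumeration of spanning trees yields $\log\tau(G_n)/|V(G_n)| \to 4C/\pi$, and $|V(G_n)| = |H_n| + o(|H_n|)$ then gives the equivalent limit above. A more self-contained alternative is to compare reduced Laplacian determinants directly via the matrix--tree theorem, bounding the change in $\log\det L$ under $o(|H_n|)$ bounded--degree edge perturbations; care is required since edge deletion can in principle create bridges, but the F\o lner hypothesis confines such deletions to a boundary region whose contribution is absorbed into the $o(|H_n|)$ error. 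Combining the three steps yields $2\pi\log\tau(G_n)/e(G_n) \to 2\pi\cdot(v_8/\pi)/2 = v_8$, as required.
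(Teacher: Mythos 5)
Your proposal is correct and follows essentially the same route as the paper: both reduce to the classical spanning tree entropy $4\mathrm{C}/\pi$ of the square lattice, transfer the asymptotic from $H_n$ to $G_n$ via Lyons's results on tree entropy and local weak convergence, and conclude using $v_8 = 4\mathrm{C}$ together with the two-to-one edge-to-vertex ratio. Your version merely spells out the handshake count and the constant identity more explicitly than the paper does.
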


\begin{proof}
Burton and Pemantle (1993), Shrock and Wu (2000), and others (see \cite{Lyons} and references therein) computed the spanning tree entropy of graphs $H_n$ that approach $G(\W)$:
\[
\lim_{n\to\infty}\frac{\log\tau(H_n)}{|H_n|} = 4{\rm C}/\pi.
\]
where ${\rm{C}} \approx 0.9160$ is Catalan's constant.  The spanning tree entropy of $G_n$ is the same as for graphs $H_n$ that approach $G(\W)$ by \cite[Corollary~3.8]{Lyons}.  Since $4{\rm{C}} =\voct$ and by the two--to--one correspondence for edges to vertices of the square grid, the result follows.
\end{proof}

Note that the subgraphs $G_n$ in Theorem \ref{thm:diagmax2} have small boundary (made precise in \cite{Lyons}) but they need not be nested, and need not exhaust the infinite square grid $G(\W)$. Because the Tait graph $G_{\W}$ is isomorphic to $G(\W)$, these results about the spanning tree entropy of Tait graphs $G_K$ are the same as for projection graphs $G(K)$ used in Theorem~\ref{thm:diagmax}.  Thus, Theorem~\ref{thm:diagmax2} implies Theorem~\ref{thm:diagmax}. This concludes the proof of Theorem~\ref{thm:diagmax}. 

\subsection{Determinant density}
We now return our attention to Conjecture~\ref{conj:diagmax} from the introduction. That conjecture is equivalent to the following conjecture due to Kenyon.

\begin{conjecture}[Kenyon \cite{kenyon}]\label{conj:kenyon}
If $G$ is any finite planar graph, 
\[ \frac{\log\tau(G)}{e(G)}\leq 2{\rm C}/\pi\approx 0.58312 \]
where ${\rm{C}} \approx 0.9160$ is Catalan's constant.
\end{conjecture}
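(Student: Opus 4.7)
The plan is to work on the graph side throughout, exploiting the equivalence noted in the introduction: since $v_8 = 4{\rm C}$, the bound $\log\tau(G)/e(G)\leq 2{\rm C}/\pi$ for all finite planar $G$ is equivalent via the Tait dictionary $\det(K)=\tau(G_K)$, $c(K)=e(G_K)$ to Conjecture~\ref{conj:diagmax} for alternating links, and Proposition~\ref{prop:det_alt} reduces the general-link form to the alternating case. So the goal reduces to the purely combinatorial inequality
\[
\log\tau(G) \ \leq \ \tfrac{2{\rm C}}{\pi}\, e(G) \quad\text{for every finite planar } G.
\]
My first move would be to pass from an arbitrary finite $G$ to a periodic planar model $\widetilde G$: embed $G$ into a fundamental domain of a doubly-periodic planar tiling by translates of a patch containing $G$ (glued along matching boundary arcs), and let $\{H_n\}$ be a F\o lner exhaustion by $n\times n$ blocks of translates. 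The theory invoked in the proof of Theorem~\ref{thm:diagmax2} then produces a well-defined asymptotic entropy
\[
h(\widetilde G) \ := \ \lim_{n\to\infty}\frac{\log\tau(H_n)}{e(H_n)},
\]
equal (up to the factor $2e_0$ counting edges per fundamental domain) to the logarithmic Mahler measure of $\det\Delta_{\widetilde G}$ on the dual torus. For $\widetilde G=G(\W)$ this evaluates to $2{\rm C}/\pi$.

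The second step is the key extremal statement: $h(\widetilde G)\leq 2{\rm C}/\pi$ for every amenable periodic planar $\widetilde G$. One promising route is planar electrical-network monotonicity. Any periodic planar $\widetilde G$ should be reducible to a subgraph of $G(\W)$ by a sequence of series, parallel, and $Y$--$\Delta$ transformations, and each such move ought not to increase $\log\tau$ per edge, in the spirit of Rayleigh monotonicity. Combined with a per-edge bound $\log\tau(H)/e(H)\leq 2{\rm C}/\pi$ for finite subgraphs $H\subset G(\W)$ (itself plausible by Cauchy interlacing applied to the discrete Laplacian as $H$ grows to $G(\W)$), this would yield $h(\widetilde G)\leq 2{\rm C}/\pi$. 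To descend back to the finite $G$, one needs a quantitative finite-to-asymptotic comparison $\log\tau(G)\leq e(G)\cdot h(\widetilde G)+O(|\partial G|)$ along the lines of Lyons' Corollary~3.13 already invoked in Theorem~\ref{thm:diagmax2}.

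The principal obstacle is the extremality in the second step: this is essentially Kenyon's conjecture itself, and no clean extremal spectral principle isolating the square-lattice Mahler measure among planar periodic Mahler measures is presently known. In particular, there is no general sequence of local electrical-network moves shown to reduce an arbitrary periodic planar graph to a sublattice of $G(\W)$ with control on $\log\tau$ per edge. Overcoming this likely requires genuinely new planar extremal input, perhaps via discrete uniformization or circle-packing estimates exploiting that the square lattice is the unique translation-invariant planar quadrangulation of maximum edge density. A secondary, more tractable obstacle is the finite-to-periodic descent, whose boundary correction must be made quantitative via transfer-matrix or determinantal estimates and shown to be $o(e(G))$; this is plausible but must be executed carefully to avoid swamping the conjectured constant $2{\rm C}/\pi$.
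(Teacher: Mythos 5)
This statement is an open conjecture (Kenyon's conjecture); the paper offers no proof of it. What the paper does, in Section~\ref{sec:diag}, is establish the \emph{equivalence} of Conjecture~\ref{conj:kenyon} with Conjecture~\ref{conj:diagmax}, using $\tau(G_K)=\det(K)$, $e(G_K)=c(K)$, $4{\rm C}=v_8$, the fact that every finite planar graph is the Tait graph of some alternating link, and Proposition~\ref{prop:det_alt} to handle the non-alternating case. Your opening paragraph reproduces this equivalence correctly, and your appeal to the spanning-tree entropy computation and to Lyons' transfer results mirrors the machinery the paper uses for Theorem~\ref{thm:diagmax2} --- but that theorem only computes the entropy for subgraphs converging to the square lattice; it says nothing about other planar graphs.

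The genuine gap is the one you yourself flag: your ``key extremal statement,'' that every periodic planar graph has spanning-tree entropy at most $2{\rm C}/\pi$, \emph{is} Kenyon's conjecture (in its periodic form), so the argument is circular. The proposed mechanisms do not close it. There is no known monotonicity of $\log\tau$ per edge under series, parallel, or $Y$--$\Delta$ moves --- these transformations change $\tau$ by multiplicative factors and change $e(G)$ simultaneously, and the ratio can move in either direction (Rayleigh monotonicity controls effective resistances, not tree counts per edge). There is also no known procedure reducing an arbitrary periodic planar graph to a subgraph of $G(\W)$. Even the auxiliary claim that finite subgraphs $H\subset G(\W)$ satisfy $\log\tau(H)/e(H)\leq 2{\rm C}/\pi$ does not follow from Cauchy interlacing in any straightforward way, since $\tau(H)$ is a product of all nonzero Laplacian eigenvalues divided by $|V(H)|$ and the denominator $e(H)$ changes as $H$ grows. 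In short: your reduction to the alternating/graph-theoretic form and your finite-to-periodic bookkeeping are consistent with what the paper establishes, but the heart of the statement remains unproved, in your write-up and in the literature.
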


The equivalence can be seen as follows. 
Since $4{\rm{C}} =\voct$ and $\tau(G_K)=\det(K)$, Conjecture~\ref{conj:kenyon} would immediately imply that Conjecture~\ref{conj:diagmax} holds for all alternating links $K$.  If $K$ is not alternating, then there exists an alternating link with the same crossing number and strictly greater determinant by Proposition~\ref{prop:det_alt}.  Therefore, Conjecture~\ref{conj:kenyon} would still imply Conjecture~\ref{conj:diagmax} in the non-alternating case.

On the other hand, any finite planar graph is realized as the Tait graph of an alternating link, with edges corresponding to crossings.  Hence Conjecture~\ref{conj:diagmax} implies Conjecture~\ref{conj:kenyon}.

Currently, the best proven upper bound for the determinant density is
due to Stoimenow \cite{stoimenow:maxdet}.  Let $\delta\approx 1.8393$
be the real positive root of $x^3-x^2-x-1=0$. Then
\cite[Theorem~2.1]{stoimenow:maxdet} implies that
$\frac{2\pi\log\det(K)}{c(K)}\leq 2\pi\log(\delta)\approx 3.8288$.  We
thank Jun Ge for informing us of this result. Note that planarity is
required to prove Conjecture~\ref{conj:diagmax} because Kenyon has
informed us that $\frac{2\pi\log\tau(G)}{e(G)}>\voct$ does occur for
some non-planar graphs.

\section{Geometry of the infinite weave}\label{sec:weave}

In this section, we discuss the geometry and topology of the infinite weave $\W$ and its complement $\RR^3-\W$. 
Recall that $\W$ is the infinite alternating link whose diagram projects to the square grid, as in Figure~\ref{fig:infweave}.

\begin{theorem}\label{thm:weave}
$\R^3-\W$ has a complete hyperbolic structure with a fundamental domain tessellated by regular ideal octahedra, one for each square of the infinite square grid. 
\end{theorem}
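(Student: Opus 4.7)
The plan is to construct the complete hyperbolic metric on $\R^3 - \W$ directly, using D.\ Thurston's octahedral decomposition of alternating link complements adapted to the infinite setting, and then to verify the hyperbolic gluing conditions using the high symmetry of the weave.

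I would begin by building a combinatorial decomposition of $\R^3 - \W$. At each crossing of $\W$ place one ideal octahedron with six ideal vertices organized into two classes: two \emph{strand} vertices, one on the over-strand and one on the under-strand at that crossing, and four \emph{region} vertices corresponding to the four squares of the lattice adjacent to that crossing. The eight triangular faces of octahedra at neighboring crossings are then glued in the standard alternating pattern, across shared strand arcs and shared region vertices. Since the square lattice has the same density of crossings as squares (each vertex being shared by the four surrounding squares), this produces a combinatorial decomposition of $\R^3 - \W$ with exactly one octahedron per square of the lattice, matching the statement.

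Next I assign to each octahedron the metric of a regular ideal octahedron in $\HH^3$, so every dihedral angle equals $\pi/2$. To conclude that these pieces assemble into a complete hyperbolic metric, I check the two standard gluing conditions. For the edge condition, I use the local symmetry of the square lattice to show that exactly four octahedra meet around each edge of the decomposition; the dihedral angles then sum to $4 \cdot \pi/2 = 2\pi$, as required. For completeness, I analyze the horospherical cross-section at each ideal vertex. The link of an ideal vertex in a regular ideal octahedron is a Euclidean square, and the translational symmetry of $\W$ forces these squares to glue into complete Euclidean surfaces: flat cylinders along each infinite strand of $\W$ (the link cusps), and complete Euclidean planes at the region cusps above and below the projection plane. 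Standard results then assemble the octahedra into a complete hyperbolic structure on $\R^3 - \W$. The $\Z \oplus \Z$ action by lattice translations preserves this metric, and a fundamental domain is tessellated by regular ideal octahedra indexed by the squares of the lattice.

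The main obstacle is the edge-identification bookkeeping. One must carefully track how the strand-to-region and region-to-region edges of adjacent octahedra are identified under the alternating over/under pattern and the checkerboard coloring of regions, and then verify that every resulting edge equivalence class contains exactly four corners of octahedra. Once this combinatorial fact is pinned down, the dihedral-angle computation and the Euclidean cusp cross-sections are immediate consequences of the symmetry of the regular ideal octahedron, whose dihedral angles are all $\pi/2$ and whose vertex links are Euclidean squares.
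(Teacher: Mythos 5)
There is a genuine gap, and it is in the very first step: the decomposition you propose cannot be realized by non\nobreakdash-degenerate regular ideal octahedra. You place one octahedron at each \emph{crossing}, with two ideal vertices on the strands and four ``region vertices'' at the adjacent squares. But in $\RR^3-\W$ the only cusps are the strands of $\W$ and the two ends above and below the projection plane; a vertex sitting in the interior of a complementary region can only be ideal if it is pulled to one of those two ends. Hence the four region vertices of a single crossing octahedron land on at most two ideal points (two up, two down), and in any developed/straightened picture the two ``up'' vertices of one octahedron lie at the \emph{same} parabolic point (the cusp above the plane has simply connected cross-section $\RR^2$, so within one embedded lift there is only one horoball to go to), and likewise for the two ``down'' vertices. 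An octahedron with two pairs of coincident vertices is degenerate and cannot carry the metric of a regular ideal octahedron. This is precisely why D.~Thurston's crossing-octahedron construction yields only the strict inequality $\vol(K)\leq v_8\,c(K)$ rather than a geometric tessellation. Your subsequent edge-count and completeness checks are therefore moot; and note also that the ``main obstacle'' you identify (verifying that every edge class is $4$-valent) is deferred rather than carried out, even though that verification is essentially the entire content of the theorem.

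The correct decomposition is the dual one: place one octahedron at each \emph{square region}, built as a top and a bottom square pyramid glued along the checkerboard face over that region, so that its four equatorial ideal vertices lie on the link at the four adjacent crossings (these are honestly distinct cusps) and its two apexes lie at the two ends $\pm\infty$ (one each, so no coincidences). This is the decomposition the paper constructs, and it does literally give one regular ideal octahedron per square, realizing the standard configuration $\{0,\infty,\pm 1,\pm i\}$ over each lattice square. A further point worth adopting from the paper: rather than gluing infinitely many polyhedra directly (where the Poincar\'e polyhedron theorem and completeness require extra care), first quotient $\RR^3-\W$ by the $\ZZ\times\ZZ$ translation symmetry to get a link $L$ in a thickened torus, i.e.\ in the complement of the Hopf link in $S^3$; verify the gluing and completeness conditions for the resulting four octahedra of $S^3-L$; and then pass to the $\ZZ\times\ZZ$ cover to obtain the tessellated structure on $\RR^3-\W$.
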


\begin{proof}
First, we view $\RR^3$ as $\RR^2 \times (-1, 1)$, with the plane of projection for $\W$ the plane $\RR^2\times\{0\}$. Thus, $\W$ lies in a small neighborhood of $\RR^2\times\{0\}$ in $\RR^2\times (-1, 1)$. We can arrange the diagram so that $\RR^3-\W$ is biperiodic and equivariant under a $\ZZ\times \ZZ$ action given by translations along the $x$ and $y$--axes, translating by two squares in each direction to match the alternating property of the diagram. Notice that the quotient gives an alternating link in the thickened torus, with fundamental region as in Figure~\ref{fig:link-on-torus}(a). A thickened torus, in turn, is homeomorphic to the complement of the Hopf link in $S^3$. Thus the quotient of $\RR^3-\W$ under the $\ZZ\times \ZZ$ action is the complement of a link $L$ in $S^3$, as in Figure~\ref{fig:link-on-torus}(b).

\begin{figure}
\begin{tabular}{ccc}
\includegraphics{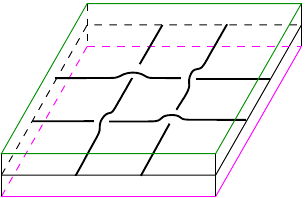} &
\includegraphics{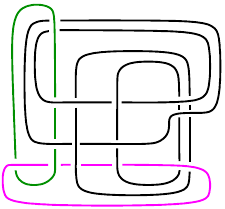} &
\includegraphics{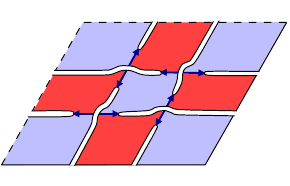} \\
& \\
(a) & (b) & (c)
\end{tabular}
  \caption{(a) The $\Z \times \Z$ quotient of $\R^3-\W$ is a link complement in a thickened torus (b) This is also the complement of link $L$ in $S^3$ (c) Cutting along checkerboard surfaces, viewed from above the projection plane.
}
  \label{fig:link-on-torus}
\end{figure}

The link complement $S^3-L$ can be easily shown to be obtained by gluing four regular ideal octahedra, for example by computer using Snap~\cite{snap} 
(which uses exact arithmetic).
Below, we present an explicit geometric way to obtain this decomposition.

Consider the two surfaces of $S^3-L$ on the projection plane of the
thickened torus, i.e.\ the image of $\RR^2\times\{0\}$. These can be
checkerboard colored on $T^2 \times \{ 0\}$.  These intersect in four
crossing arcs, running between crossings of the single square shown in
the fundamental domain of Figure~\ref{fig:link-on-torus}. Generalizing
the usual polyhedral decomposition of alternating links, due to
Menasco~\cite{menasco:polyhedra} (see also \cite{lackenby}), cut along
these checkerboard surfaces. When we cut, the manifold falls into two
pieces $X_1$ and $X_2$, each homeomorphic to $T^2\times I$, with one
boundary component $T^2\times\{1\}$, say, coming from a Hopf link
component in $S^3$, and the other now given faces, ideal edges, and
ideal vertices from the checkerboard surfaces, as follows.
\begin{enumerate}
\item For each piece $X_1$ and $X_2$, there are four faces total, two red and two blue, all quadrilaterals coming from the checkerboard surfaces.
\item There are four equivalence class of edges, each corresponding to a crossing arc.
\item Ideal vertices come from remnants of the link in $T^2\times\{0\}$: either overcrossings in the piece above the projection plane, or undercrossings in the piece below.
\end{enumerate}
The faces (red and blue), edges (dark blue), and ideal vertices (white) for $X_1$ above the projection plane are shown in Figure~\ref{fig:link-on-torus}(c).

Now, for each ideal vertex on $T^2\times\{0\}$ of $X_i$, $i=1,2$, add an edge running vertically from that vertex to the boundary component $T^2\times\{1\}$. Add triangular faces where two of these new edges together bound an ideal triangle with one of the ideal edges on $T^2\times\{1\}$. These new edges and triangular faces cut each $X_i$ into four square pyramids. Since $X_1$ and $X_2$ are glued across the squares at the base of these pyramids, this gives a decomposition of $S^3-L$ into four ideal octahedra, one for each square region in $T^2\times\{0\}$. 

Give each octahedron the hyperbolic structure of a hyperbolic regular ideal octahedron. Note that each edge meets exactly four octahedra, and so the monodromy map about each edge is the identity. Moreover, each cusp is tiled by Euclidean squares, and inherits a Euclidean structure in a horospherical cross--section. Thus by the Poincar\'e polyhedron theorem (see, e.g., \ \cite{epstein-petronio}), this gives a complete hyperbolic structure on $S^3-L$. 

Thus the universal cover of $S^3-L$ is $\HH^3$, tiled by regular ideal octahedra, with a square through the center of each octahedron projecting to a square from the checkerboard decomposition. 
Taking the cover of $S^3-L$ corresponding to the $\ZZ\times\ZZ$
subgroup associated with the Hopf link, we obtain a complete
hyperbolic structure on $\RR^3-\W$, with a fundamental domain
tessellated by regular ideal octahedra, with one octahedron for each
square of the square grid, as claimed.
\end{proof}

\begin{figure}[h]
\begin{tabular}{ccc}
 \includegraphics[scale=0.5]{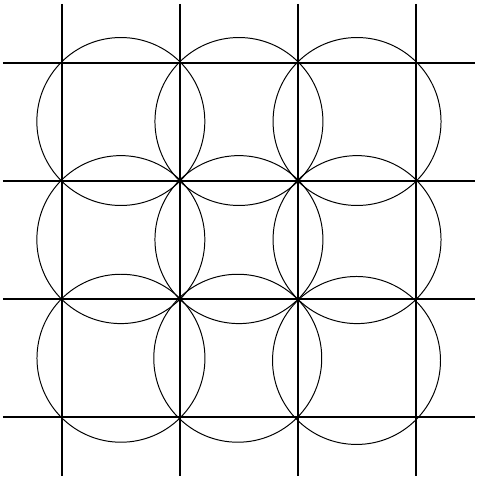} & \qquad &
 \includegraphics[scale=1.1]{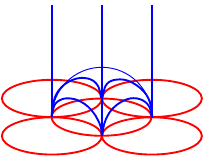} \\
(a) & \qquad & (b) \\
\end{tabular}
\caption{(a) Circle pattern for hyperbolic planes of the top piece $X_1$ of $\R^3-\W$.
  (b) Hyperbolic planes bounding one top square pyramid.}
  \label{fig:W_top}
\end{figure}

\begin{figure}[h]
\includegraphics[scale=0.22]{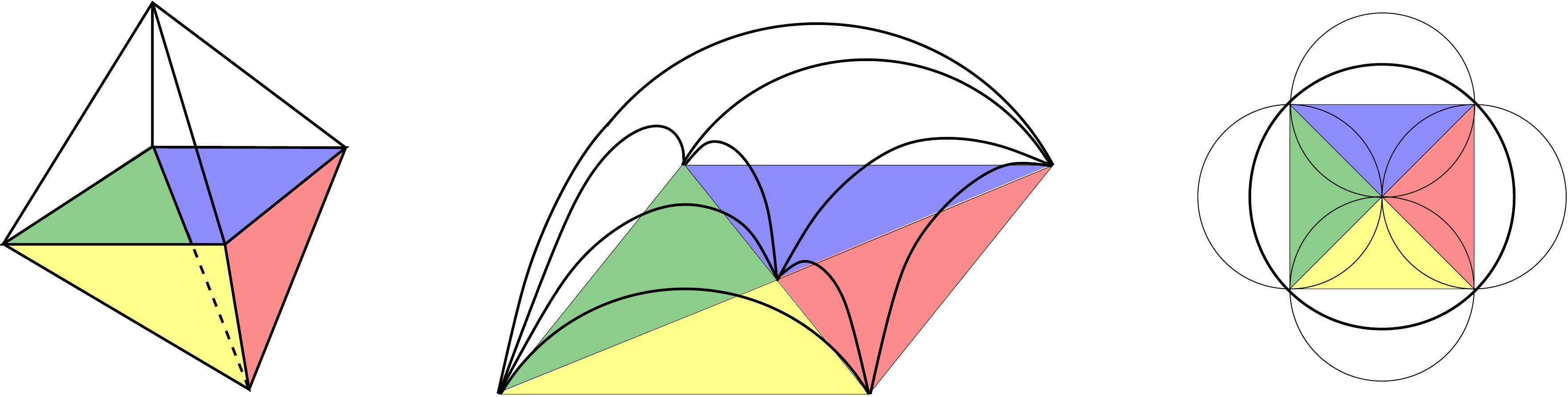} 
\caption{A regular ideal hyperbolic octahedron is obtained by gluing the
  two square pyramids. Hyperbolic planes that form the bottom square pyramid and the associated circle pattern are shown.}
  \label{fig:W_bottom}
\end{figure}

\begin{figure}
  \input{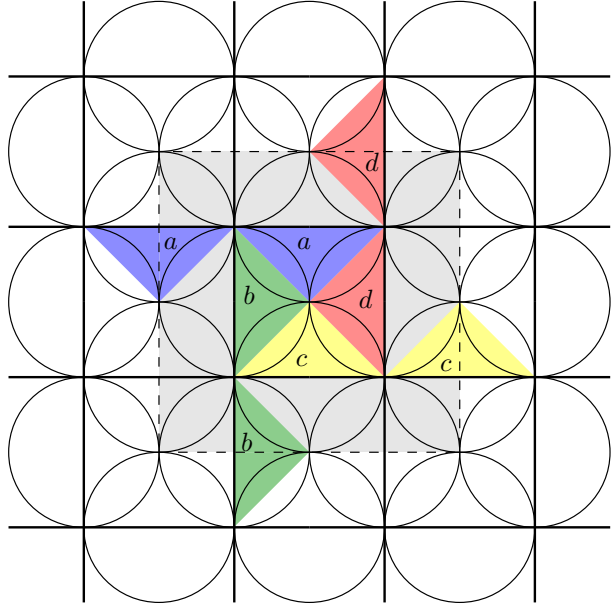} \\
  \caption{Face pairings for a fundamental domain $\P_{\W}$ of $\R^3-\W$. The 
shaded part indicates the fundamental domain of $S^3-L$. }
  \label{fig:weave-decom}
\end{figure}

The proof of Theorem~\ref{thm:weave} also provides the face
pairings for the regular ideal octahedra that tessellate the 
fundamental domain for $\RR^3-\W$.
We also discuss the associated circle patterns on $T^2\times\{0\}$,
which in the end play an important role in the proof of
Theorem~\ref{thm:geommax} in Section \ref{sec:vollims}.

A regular ideal octahedron is obtained by gluing two square pyramids,
which we will call the {\em top} and {\em bottom} pyramids. 
In Figure~\ref{fig:W_top}(b), the apex of
the top square pyramid is at infinity, the triangular faces are shown
in the vertical planes, and the square face is on the hemisphere.  

From the proof above, $\RR^3-\W$ is cut into $\tilde{X_1}$ and
$\tilde{X_2}$, such that $\tilde{X_1}$ is obtained by gluing top
pyramids along triangular faces, and $\tilde{X_2}$ by gluing bottom
pyramids along triangular faces.  The circle pattern in
Figure~\ref{fig:W_top}(a) shows how the square pyramids in $\tilde{X_1}$ 
are viewed from infinity on the $xy$-plane.

Similarly, in Figure~\ref{fig:W_bottom}, we show the hyperbolic planes
that form the bottom square pyramid, and the associated circle
pattern.  In this figure, the apex of the bottom square pyramid is in
the center, the triangular faces are on hemispherical planes, and the
square face is on the upper hemisphere.  The circle pattern shows how the bottom square pyramids on the $xy$-plane are viewed from infinity on the $xy$-plane.

A fundamental domain $\P_{\W}$ for $R^3-\W$ in $\HH^3$ is explicitly
obtained by attaching each top pyramid of $\tilde{X_1}$ to a bottom
pyramid of $\tilde{X_2}$ along their common square face.  Hence,
$\P_{\W}$ is tessellated by regular ideal octahedra. By the proof above,
an appropriate $\pi/2$ rotation is needed when gluing the square
faces, which determines how adjacent triangular faces are glued to
obtain $\P_{\W}$.  Figure~\ref{fig:weave-decom} shows the face pairings
for the triangular faces of the bottom square pyramids, and the
associated circle pattern. The face pairings are equivariant under the
translations $(x,y) \mapsto (x\pm 1,y\pm 1)$. That is, when a pair of
faces is identified, then the corresponding pair of faces under this
translation is also identified.

\begin{remark}\label{rmk:v8}
  Because every regular ideal octahedron corresponds to a square face
  that meets four crossings, and any crossing meets four square faces
  that correspond to four ideal octahedra, it follows that the volume
  density of the infinite link $\W$ is exactly $\voct$.
\end{remark}

\section{Guts and lower volume bounds}\label{sec:guts}

In this section, we begin the proof of Theorem~\ref{thm:geommax} by showing that knots satisfying the hypotheses of that theorem have volume bounded below by the volume of a certain right--angled polyhedron. The main result of this section is Theorem~\ref{thm:volbound} below.

The techniques we use to bound volume from below involve guts of embedded essential surfaces, which we define below. Since we will be dealing with orientable as well as nonorientable surfaces, we say that any surface is \emph{essential} if and only if the boundary of a regular neighborhood of the surface is an essential (orientable) surface, i.e.\ it is incompressible and boundary incompressible.

If $N$ is a 3--manifold admitting an embedded essential surface $\Sigma$, then $N\cut\Sigma$ denotes the manifold with boundary obtained by removing a regular open neighborhood of $\Sigma$ from $N$.  Let $\widetilde{\Sigma}$ denote the boundary of $N\cut\Sigma$, which is homeomorphic to the unit normal bundle of $\Sigma$.  Note that if $N$ is an open manifold, i.e.\ $N$ has nonempty topological frontier consisting of rank--2 cusps, then $\widetilde{\Sigma}$ will be a strict subset of the topological frontier of $N\cut\Sigma$, which consists of $\widetilde{\Sigma}$ and a collection of tori and annuli coming from cusps of $N$.

\begin{define}\label{def:ParabolicLocus}
The \emph{parabolic locus} of $N\cut\Sigma$ consists of tori and annuli on the topological frontier of $N\cut\Sigma$ which come from cusps of $N$.
\end{define}

We let $D(N\cut\Sigma)$ denote the double of the manifold
$N\cut\Sigma$, doubled along the boundary $\widetilde{\Sigma}$.  The
manifold $D(N\cut\Sigma)$ admits a JSJ--decomposition.  That is, it
can be decomposed along essential annuli and tori into Seifert fibered
and hyperbolic pieces.  This gives an annulus decomposition of
$N\cut\Sigma$: a collection of annuli in $N\cut\Sigma$, disjoint from
the parabolic locus, that cut $N\cut\Sigma$ into $I$--bundles, Seifert
fibered solid tori, and \emph{guts}.  Let $\guts(N\cut\Sigma)$ denote
the guts, which is the portion that admits a hyperbolic metric with
geodesic boundary. Let $D(\guts(N\cut\Sigma))$ denote the complete hyperbolic
3-manifold obtained by doubling the $\guts(N\cut\Sigma)$ along 
the part of boundary contained in $\widetilde{\Sigma}$ (i.e. disjoint from 
the parabolic locus of $N\cut \Sigma$).

\begin{theorem}[Agol--Storm--Thurston \cite{AgolStormThurston}]\label{thm:AST}
Let $N$ be a finite volume hyperbolic manifold, and $\Sigma$ an embedded $\pi_1$--injective surface in $N$.  Then
\begin{equation}\label{eqn:AST}
  \vol(N) \: \geq \: \frac{1}{2}\,\vtet\,||D(N\cut\Sigma)|| \: = \:
  \frac{1}{2}\vol(D(\guts(N\cut\Sigma))).
\end{equation}
Here the value $||\cdot||$ denotes the Gromov norm of the manifold. 
\end{theorem}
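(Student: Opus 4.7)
The plan is to follow the Agol--Storm--Thurston strategy, which breaks the theorem into two parts: the rightmost equality, which is purely topological-geometric, and the inequality $\vol(N) \geq \tfrac12 v_3\,||D(N\cut\Sigma)||$, whose proof rests on Perelman's work on Ricci flow with surgery. The rightmost equality will follow from hyperbolization together with additivity of the Gromov norm, while the inequality needs an analytic monotonicity argument starting from the doubled hyperbolic metric on $N$.

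For the equality, I would first invoke Thurston's hyperbolization for pared $3$--manifolds to equip $\guts(N\cut\Sigma)$ with a complete hyperbolic structure having totally geodesic boundary along the non-parabolic part of $\widetilde{\Sigma}$. Doubling across this geodesic boundary produces a complete finite volume hyperbolic $3$--manifold $D(\guts(N\cut\Sigma))$, and Thurston's theorem equating hyperbolic volume with $v_3$ times the Gromov norm gives $\vol(D(\guts(N\cut\Sigma))) = v_3\,||D(\guts(N\cut\Sigma))||$. Next, Gromov's additivity of the simplicial volume under gluing along incompressible tori and annuli, combined with vanishing of the Gromov norm on Seifert fibered pieces and $I$--bundles, gives $||D(N\cut\Sigma)|| = ||D(\guts(N\cut\Sigma))||$. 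This establishes the rightmost equality and reduces the theorem to the inequality.

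For the inequality, the idea is to start from the hyperbolic metric $g_N$ on $N$, pull it back to $N\cut\Sigma$, and double it along $\widetilde{\Sigma}$ to produce a metric $g_0$ on $D(N\cut\Sigma)$ that is smooth off the doubled surface and $C^{1,1}$ across it, with $\vol(g_0) = 2\vol(N)$. After smoothing $g_0$ with arbitrarily small volume increase, I would apply Perelman's Ricci flow with surgery. The key input is Perelman's monotonicity for the $\bar\lambda$--invariant (equivalently, monotonicity of an appropriately normalized volume along the flow), together with the identification of the long-time normalized limit with the hyperbolic pieces of the geometric decomposition of $D(N\cut\Sigma)$. Since only the doubled guts contribute to the hyperbolic part (the $I$--bundles and Seifert pieces are collapsed by the flow), this yields
\[
2\vol(N) \;=\; \vol(g_0) \;\geq\; v_3\,||D(N\cut\Sigma)||,
\]
after letting the smoothing parameter tend to zero.

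The hard part is the last step: controlling the Ricci flow with a merely $C^{1,1}$ initial metric, ensuring that the hyperbolic pieces of the geometric decomposition of $D(N\cut\Sigma)$ correspond to the thick part of the flow, and verifying that the smoothing along $\widetilde{\Sigma}$ contributes negligibly in the limit. The cleanest packaging, due to Agol--Storm--Thurston, phrases the estimate in terms of Perelman's $\bar\lambda$--invariant of $D(N\cut\Sigma)$, for which the volume $2\vol(N)$ of the doubled hyperbolic metric is an upper bound while $v_3\,||D(N\cut\Sigma)||$ is the sharp lower bound from Perelman's decomposition theorem; combining these two directions yields the inequality.
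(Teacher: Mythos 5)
This statement is quoted in the paper as an external result of Agol--Storm--Thurston \cite{AgolStormThurston}; the paper supplies no proof of its own, so the only meaningful comparison is with the argument in the cited reference, which your outline is clearly modeled on. Your treatment of the right-hand equality (hyperbolize the guts with totally geodesic boundary, double, use $\vol = v_3\,||\cdot||$ for hyperbolic pieces, and additivity/vanishing of the Gromov norm over the JSJ pieces) matches the standard argument and is fine in outline.

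There is, however, a genuine gap in your treatment of the inequality: you double the hyperbolic metric along an arbitrary embedded representative of $\Sigma$ and assert that, after a small smoothing, Perelman's volume estimate applies. The essential step you omit is isotoping $\Sigma$ to a \emph{least-area (minimal) representative} before cutting and doubling. The doubled metric is only Lipschitz across the doubling locus, and its distributional scalar curvature acquires a singular term along $\widetilde{\Sigma}$ proportional to the mean curvature of the surface; only when $\Sigma$ is minimal does this term vanish, so that the double admits smoothings with scalar curvature $\geq -6-\epsilon$ and volume at most $2\vol(N)+\epsilon$. For a non-minimal representative the singular term is negative on one side, the scalar curvature bound fails, and the Perelman-based inequality $\vol(g)\geq v_3\,||M||$ (for $R_g\geq -6$) cannot be invoked. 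The existence of the least-area representative (via Freedman--Hass--Scott, using $\pi_1$-injectivity) and the stability/curvature comparison for minimal surfaces in hyperbolic manifolds are the core inputs of the Agol--Storm--Thurston proof, and your sketch cannot be completed without them. Your description of the metric as $C^{1,1}$ is also inaccurate for the same reason: regularity across the doubling locus is exactly the issue that minimality is introduced to control.
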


We will prove Theorem \ref{thm:geommax} in a sequence of lemmas that concern the geometry and topology of alternating links, and particularly ideal checkerboard polyhedra that make up the complements of these alternating links.  These ideal polyhedra were described by Menasco \cite{menasco:polyhedra} (see also \cite{lackenby}).  We review them briefly.

\begin{define}\label{def:polyhedra}
Let $K$ be a hyperbolic alternating link with an alternating diagram (also denoted $K$) that is checkerboard colored.  Let $B$ (blue) and $R$ (red) denote the checkerboard surfaces of $K$.  If we cut $S^3-K$ along both $B$ and $R$, the manifold decomposes into two identical ideal polyhedra, denoted by $P_1$ and $P_2$.  We call these the \emph{checkerboard ideal polyhedra of $K$}.  They have the following properties.
\begin{enumerate}
\item\label{item:poly1} For each $P_i$, the ideal vertices and edges form a 4--valent graph on $\partial P_i$, and that graph is isomorphic to the projection graph of $K$ on the projection plane.
\item\label{item:poly2} The faces of $P_i$ are colored blue and red corresponding to the checkerboard coloring of $K$.
\item\label{item:poly3} To obtain $S^3-K$ from $P_1$ and $P_2$, glue each red face of $P_1$ to the same red face of $P_2$, and glue each blue face of $P_1$ to the same blue face of $P_2$.
\end{enumerate}
The gluing maps in item~\eqref{item:poly3} are not the identity maps, but rather involve a single clockwise or counterclockwise ``twist'' (see \cite{menasco:polyhedra} for details).  In this paper, we won't need the precise gluing maps, just which faces are attached.
\end{define}

The checkerboard surfaces $B$ and $R$ are well known to be essential in the alternating link complement \cite{menasco-thist:alternating}.  We will cut along these surfaces, and investigate the manifolds $(S^3-K)\cut B$ and $(S^3-K)\cut R$.  Note that because $K\subset B$, there is a homeomorphism $(S^3-K)\cut B \cong S^3\cut B$, with parabolic locus mapping to identical parabolic locus.  We will simplify notation by writing $S^3\cut B$.

We will need to work with diagrams without Conway spheres.
Menasco~\cite{menasco} and Thistlethwaite~\cite{Thistlethwaite_PJM, Thistlethwaite_JKTR} showed that for a prime alternating link diagram, essential Conway spheres can appear in very limited ways. We also consider inessential 4--punctured spheres, for example bounding rational tangles. 
Following Thistlethwaite's notation, we say that a 4--punctured sphere is {\em visible} if it is parallel to one dividing the diagram into two tangles, as in Figure~\ref{fig:visibleTangle}.
In \cite[Proposition 5.1]{Thistlethwaite_PJM}, it was shown that if there is an essential visible Conway sphere, then it is always visible in any prime alternating diagram of the same link. 

\begin{figure}
\includegraphics{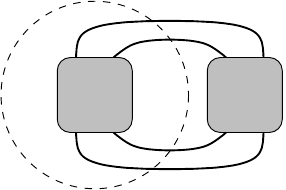}
  \caption{A visible Conway sphere}
  \label{fig:visibleTangle}
\end{figure}

\begin{define}\label{def:CycleOfTangles}
We call a 2--tangle {\em knotty} if it is nontrivial, and not a (portion of a) single twist region; i.e.\ not a rational tangle of type $n$ or $1/n$ for $n\in\Z$.  We will say that $K_n$ contains a {\em cycle of tangles} if $K_n$ contains a visible Conway sphere with a knotty tangle on each side.
\end{define}

For any link that contains a cycle of tangles, one of its two Tait graphs has a $2$--vertex cut set coming from the regions on either side of a tangle.
On the other hand, the Tait graphs of $\W$ are both the square grid, which is $4$--connected.  So $\W$ has no cycle of tangles.

Recall that a diagram is \emph{prime} if any simple closed curve meeting the diagram exactly twice, transversely in the interiors of edges, contains no crossings on one side. A diagram is \emph{twist--reduced} if any simple closed curve meeting the diagram exactly twice in crossings, running directly through the crossing to the region on the opposite side, bounds a (possibly empty) string of bigon regions of the diagram on one side.

\begin{lemma}\label{lemma:remove-bigons}
Let $K$ be a link diagram that is prime, alternating, and twist--reduced with no cycle of tangles, with red and blue checkerboard surfaces.  Obtain a new link diagram $K_R$ (resp. $K_B$) by removing red (resp. blue) bigons from the diagram of $K$ and replacing adjacent red (resp. blue) bigons in a twist region with a single crossing in the same direction.  Then the resulting diagram $K_R$ (resp. $K_B$) is prime, alternating, and twist--reduced with no cycle of tangles.
\end{lemma}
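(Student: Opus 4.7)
The plan is to prove each of the four properties for $K_R$ by contradiction, lifting a hypothetical violating configuration in $K_R$ back to a violation in $K$; the argument for $K_B$ is symmetric. Call the operation $K \to K_R$ the \emph{red collapse}: it replaces each maximal red twist region of $K$ by a single crossing of the same handedness, and it is local in the sense that the projection planes of $K$ and $K_R$ can be identified outside small disks around each red twist region, where the two diagrams agree. Alternatingness of $K_R$ is immediate from this locality, since the replacement crossing preserves the over/under pattern at the boundary of each local disk.

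For primality, any simple closed curve $\gamma_R$ in the projection plane meeting $K_R$ transversely in two edges with crossings on both sides avoids all crossings, hence can be isotoped to avoid the local disks. Then $\gamma_R$ lifts to a simple closed curve in the projection plane of $K$ meeting $K$ in the same two edges, still with crossings on both sides (any local disk lies on a single side and contributes at least one crossing to $K$), contradicting primality of $K$. For twist-reducedness, a hypothetical violating curve $\gamma_R$ in $K_R$ meets $K_R$ in four edges adjacent to two crossings $c_1^R, c_2^R$ but encloses a region that is not a row of bigons between them. Isotoping $\gamma_R$ outside local disks and lifting to $K$, the four edges become external to the twist regions of $K$ associated to $c_1^R, c_2^R$ (or just the crossings themselves, if unaltered); after a small further isotopy, $\gamma_R$ in $K$ is adjacent to two specific end-crossings of these twist regions. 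Twist-reducedness of $K$ forces the enclosed region in $K$ to be a row of bigons between those crossings; applying the red collapse to this row yields a row of bigons in $K_R$ between $c_1^R$ and $c_2^R$, contradicting the assumption.

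Finally, suppose $K_R$ contains a cycle of tangles, with a visible Conway sphere projecting to a curve $\gamma_R$ meeting $K_R$ in four edges and bounding knotty tangles $T_1, T_2$ on either side. Since $\gamma_R$ meets only edges, we isotope it outside all local disks, so each such disk lies entirely in $T_1$ or $T_2$. Lifting to $K$ produces a planar curve meeting $K$ in four edges and cutting out tangles $T_1^K, T_2^K$, where each $T_i^K$ is obtained from $T_i$ by un-collapsing the red twist regions at the local disks it contains. It suffices to show each $T_i^K$ is knotty. If $T_i^K$ had zero crossings, so would $T_i$, since the red collapse does not create crossings; if $T_i^K$ were a single integer twist region of type $n$ or $1/n$, then collapsing any internal red bigons leaves a rational tangle of the same form, so $T_i$ would not be knotty either. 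Both cases contradict knottiness of $T_i$, so $T_1^K, T_2^K$ are knotty and $K$ has a cycle of tangles, a contradiction.

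The main obstacle is carrying out the lifting arguments rigorously, especially for twist-reducedness and the cycle-of-tangles condition, where one needs the distinguished ``two crossings'' or ``two knotty tangles'' in $K_R$ to transport faithfully to analogous objects in $K$. This is resolved throughout by the observation that the curves in question always avoid crossings of $K_R$, hence can be isotoped into the complement of the local disks, in which $K$ and $K_R$ literally coincide.
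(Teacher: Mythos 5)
Your arguments for alternation, primality, and the absence of a cycle of tangles are essentially the paper's: in each case the test curve meets $K_R$ only in edges, so it can be pushed off the local disks where the red collapse happened, and the configuration lifts verbatim to $K$. That part is fine.

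The gap is in your treatment of twist-reducedness. A curve $\gamma_R$ witnessing a failure of twist-reducedness is not disjoint from the crossings in the relevant sense: its four intersection points with the diagram lie on edges \emph{adjacent to} the two crossings $c_1^R, c_2^R$, and the curve runs through opposite sides of each. If $c_i^R$ is the collapse of a red twist region $T_i$ of $K$, the four edges at $c_i^R$ correspond to the four outer edges of $T_i$, two at each end, and the two edges that $\gamma_R$ crosses may lie at \emph{opposite} ends of $T_i$ (this happens precisely when $\gamma_R$ runs through opposite sides of $c_i^R$ in the direction of the axis of $T_i$). In that case the lifted curve in $K$ meets four edges that are adjacent to three or four distinct crossings of $K$, so the hypothesis of twist-reducedness of $K$ does not apply to it, and your assertion that ``after a small further isotopy, $\gamma_R$ in $K$ is adjacent to two specific end-crossings of these twist regions'' fails. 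The paper sidesteps this entirely by proving twist-reducedness \emph{last}, as a consequence of the no-cycle-of-tangles property of $K_R$ itself: perturb $\gamma_R$ so that $c_1^R$ lies on one side and $c_2^R$ on the other; then $\gamma_R$ meets $K_R$ only in edges and cuts out two nontrivial tangles, neither of which can be a portion of a single twist region (else that side of the original curve would bound a string of bigons), so both are knotty and $K_R$ would contain a cycle of tangles, contradicting what was already established. If you reorder your proof to handle the cycle-of-tangles condition before twist-reducedness and then run this reduction, the twist-reduced case closes without any lifting at all.
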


\begin{proof}
Because twist regions bounding red bigons are replaced by a single crossing in the same direction, the diagram of $K_R$ remains alternating.  If it is not prime, there would be a simple closed curve $\gamma$ meeting the diagram transversely twice in two edges, with crossings on either side.  Because the closed curve $\gamma$ does not meet crossings, we may re-insert the red bigons into a small neighborhood of the crossings of $K_R$ without meeting $\gamma$.  Then $\gamma$ gives a simple closed curve in the diagram of $K$ meeting the diagram twice with crossings on either side, contradicting the fact that $K$ is prime.

Next suppose the diagram of $K_R$ contains a cycle of tangles.  The corresponding visible Conway sphere must avoid crossings of $K_R$, so we may re-insert red bigons into a neighborhood of crossings of the diagram, avoiding boundaries of tangles, and we obtain a visible Conway sphere in $K$.  Because $K$ contains no cycle of tangles, one of the resulting tangles must be trivial, or a single twist region.  If the tangle is trivial in $K$, then it is trivial in $K_R$.  But if a tangle is a part of a twist region in $K$, then it is either a part of a twist region in $K_R$, or a single crossing, depending on whether the bigons in the twist region are blue or red.  In either case, the Conway sphere bounds a knotty tangle in $K$, which is not allowed by Definition~\ref{def:CycleOfTangles}.

Finally suppose that $K_R$ is not twist--reduced.  Then there exists a simple closed curve $\gamma'$ meeting the diagram in exactly two crossings $x$ and $y$, with $\gamma'$ running through opposite sides of the two crossings, such that neither side of $\gamma'$ bounds a string of bigons in a twist region.  Perturb $\gamma'$ slightly so that it contains $x$ on one side, and $y$ on the other.  Then $\gamma'$ defines two nontrivial tangles, one on either side of $\gamma'$, neither of which can be knotty. This contradicts the fact shown in the previous paragraph, namely that $K$ is not a cycle of tangles. 
\end{proof}

\begin{corollary}\label{cor:remove-bigons}
For $K$ as in Lemma~\ref{lemma:remove-bigons}, let $K_{BR}$ be obtained by replacing any twist region in the diagram of $K$ by a single crossing (removing both red and blue bigons).  Then the diagram of $K_{BR}$ will be prime, alternating, and twist--reduced with no cycle of tangles.
\end{corollary}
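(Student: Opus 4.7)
The plan is to derive Corollary~\ref{cor:remove-bigons} by applying Lemma~\ref{lemma:remove-bigons} twice in succession, once for each color. The key observation is that in an alternating diagram, every maximal twist region is monochromatic in its bigons: the string of bigons in a twist region is either entirely red or entirely blue, determined by the checkerboard coloring and the direction of twist. Consequently, replacing each twist region by a single crossing in the same direction is the same operation as first collapsing all red-bigon twist regions and then collapsing all blue-bigon twist regions (or vice versa).

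Concretely, I would proceed as follows. First, apply Lemma~\ref{lemma:remove-bigons} to $K$ to produce $K_R$, the diagram obtained by replacing each string of adjacent red bigons with a single crossing. By the lemma, $K_R$ is prime, alternating, and twist-reduced, and contains no cycle of tangles. Second, observe that $K_R$ inherits a checkerboard coloring from $K$ in which blue regions of $K$ correspond to blue regions of $K_R$ and the only remaining bigons are blue. Since Lemma~\ref{lemma:remove-bigons} is symmetric in the two colors (the statement is phrased with "resp."), it applies to $K_R$ with the roles of red and blue reversed: removing all blue bigons of $K_R$ and replacing each adjacent block of blue bigons by a single crossing in the same direction yields a diagram that is again prime, alternating, twist-reduced, and has no cycle of tangles.

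Finally, I would verify that this two-step diagram is precisely $K_{BR}$. This amounts to checking that if a twist region of $K$ consists of $n$ crossings (with bigons of one color, say red), then both procedures replace it with a single crossing oriented in the same direction: the red-removal step collapses the red twist region of $K$ to a single crossing in $K_R$, and that crossing is untouched by the subsequent blue-removal step because it now bounds no bigons. The case of blue twist regions in $K$ is analogous, collapsed by the second application. Thus $K_{BR}$ equals the result of the two successive lemma applications and inherits all four properties.

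The only real subtlety, and the step I expect to require the most care, is confirming that $K_R$ genuinely satisfies the hypotheses needed to re-invoke Lemma~\ref{lemma:remove-bigons} with colors reversed: namely that the blue checkerboard structure of $K_R$ is still an honest checkerboard coloring of a prime, alternating, twist-reduced diagram with no cycle of tangles. This follows directly from the conclusion of the first application, but it is worth stating explicitly because the lemma's hypothesis list must be checked for the new diagram rather than only for the original $K$.
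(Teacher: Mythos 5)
Your proposal is correct and matches the paper's proof, which likewise obtains $K_{BR}$ by applying Lemma~\ref{lemma:remove-bigons} to $K$ to get $K_R$ and then applying the lemma again to the blue bigons of $K_R$. The additional checks you carry out (that twist regions are monochromatic in their bigons and that the two-step collapse agrees with the definition of $K_{BR}$) are sensible but routine, and the paper leaves them implicit.
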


\begin{proof}
Replace $K$ in the statement of Lemma~\ref{lemma:remove-bigons} with $K_R$ and apply the lemma to the blue bigons of $K_R$.
\end{proof}

Given a twist region in the diagram of a knot or link, recall that a \emph{crossing circle} at that twist region is a simple closed curve in the diagram, bounding a disk in $S^3$ that is punctured exactly twice by the diagram, by strands of the link running through that twist region.

\begin{lemma}\label{lemma:step1-cut1}
Let $K$ be a hyperbolic link with a prime, alternating, twist--reduced diagram (also called $K$) with no cycle of tangles.  Let $B$ denote the blue checkerboard surface of $K$.  Let $K_R$ be the link with diagram obtained from that of $K$ by replacing adjacent red bigons by a single crossing, and let $B_R$ be the blue checkerboard surface for $K_R$.
Then there exists a collection of twist regions bounding blue bigons in $K_R$, and Seifert fibered solid tori $E$, with the core of each solid torus in $E$ isotopic to a crossing circle encircling one of these twist regions, such that 
\[
\guts(S^3\cut B) \: =  \: \guts(S^3\cut B_R) \: = \: (S^3\cut B_R) - E,
\]
and
\[
\vol(S^3-K) \: \geq \: \frac{1}{2}\,\vtet\,||D(S^3\cut B_R)|| \: = \: \frac{1}{2}\vol(D((S^3\cut B_R)-E)).
\]
\end{lemma}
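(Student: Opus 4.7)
The strategy will be to combine Lackenby's analysis \cite{lackenby} of the characteristic submanifold of $S^3\cut B$ for alternating links with Theorem \ref{thm:AST}. First, I would invoke Lackenby's identification of the characteristic submanifold of $S^3\cut B$ for prime, alternating, twist-reduced diagrams: its $I$-bundle pieces correspond bijectively to red bigons of $K$ (arising when pairs of red bigon faces of the checkerboard polyhedra are glued across $B$), while its Seifert-fibered pieces are solid tori whose cores are crossing circles of twist regions bounding blue bigons. The guts is then the complement of these two families of pieces. Both $B$ and $B_R$ are essential by Menasco-Thistlethwaite \cite{menasco-thist:alternating}, and the no-cycle-of-tangles hypothesis rules out additional JSJ pieces coming from essential Conway spheres in the double.

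Next, I would argue that the collapse $K\mapsto K_R$ (replacing each red bigon twist region by a single crossing) precisely deletes the $I$-bundle pieces from the characteristic submanifold of $S^3\cut B$, while leaving the Seifert-fibered pieces and guts unaffected. Intuitively, a chain of $I$-bundle pieces glued end-to-end along common boundary disks is itself an $I$-bundle, and collapsing it identifies its two ends; combinatorially, the annuli cutting off these $I$-bundles in $S^3\cut B$ match up across red-bigon twist regions and compose into a single annulus which becomes inessential in $S^3\cut B_R$. Since by Lemma \ref{lemma:remove-bigons} the diagram of $K_R$ is again prime, alternating, twist-reduced, with no cycle of tangles, Lackenby's analysis applies to $S^3\cut B_R$ directly; now every red face on the checkerboard polyhedra of $K_R$ has $\geq 3$ sides, so there are no $I$-bundle pieces left in the characteristic submanifold, only Seifert fibered solid tori $E$ with cores at crossing circles of the blue-bigon twist regions in $K_R$. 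This yields
$$
\guts(S^3\cut B) \;=\; \guts(S^3\cut B_R) \;=\; (S^3\cut B_R) - E.
$$

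Finally, for the volume bound, since $B$ is essential I apply Theorem \ref{thm:AST} to obtain
$$
\vol(S^3-K) \;\geq\; \tfrac{1}{2}\,v_3\,\|D(S^3\cut B)\| \;=\; \tfrac{1}{2}\,\vol(D(\guts(S^3\cut B))),
$$
and substitute the identification of the guts established above. The principal technical obstacle is verifying the explicit correspondence between $I$-bundle pieces of the characteristic submanifold of $S^3\cut B$ and red bigons of the diagram, and confirming that the JSJ annulus decomposition (including its compatibility with the parabolic locus) is preserved under the collapse $K \mapsto K_R$. This requires a careful combinatorial analysis of essential annuli in the checkerboard polyhedra, which is essentially present in Lackenby's original paper and extended in subsequent work on guts of surfaces in alternating link complements.
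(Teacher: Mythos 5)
Your overall strategy --- Lackenby's analysis of the characteristic submanifold of $S^3\cut B$ combined with Theorem~\ref{thm:AST}, plus the reduction $\guts(S^3\cut B)=\guts(S^3\cut B_R)$ from \cite[Section~5]{lackenby} --- is the same as the paper's. However, there is a genuine gap at the central step. You attribute to Lackenby the statement that the Seifert fibered pieces of the characteristic submanifold of $S^3\cut B_R$ are ``solid tori whose cores are crossing circles of twist regions bounding blue bigons.'' That is not what Lackenby proves, and it is false without the no-cycle-of-tangles hypothesis. What Lackenby's proof of \cite[Theorem~14]{lackenby} shows is that a parabolically incompressible essential annulus bounding a Seifert fibered solid torus determines a \emph{cycle of fused units}: a cyclic arrangement of pairs of crossings separated by arbitrary (possibly knotty) tangles. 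Lackenby only needs that such pieces have zero Euler characteristic; he never identifies them with blue-bigon twist regions, because in general they are not. The precise form of $E$ claimed in the lemma is exactly the new content that must be proved here.

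This is where the hypothesis enters, and your proposal aims it at the wrong target: you say it ``rules out additional JSJ pieces coming from essential Conway spheres in the double,'' but what is actually needed is an argument constraining the cycles of fused units themselves. The paper's proof shows: (i) if two or more fused units in a cycle contain a knotty tangle, one assembles a visible Conway sphere with a knotty tangle on each side, contradicting the no-cycle-of-tangles assumption; (ii) if every tangle in the cycle is trivial, the diagram is that of a $(2,q)$--torus link, contradicting hyperbolicity; hence (iii) exactly one tangle is nontrivial, which forces the cycle of fused units to be a twist region of blue bigons and gives $E$ the stated form. Without steps (i)--(iii) your identification $\guts(S^3\cut B_R)=(S^3\cut B_R)-E$ with $E$ supported on blue-bigon crossing circles is unjustified. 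The remainder of your argument (essentiality of $B$, the application of Theorem~\ref{thm:AST}, and the heuristic for why collapsing red bigons removes only $I$--bundle pieces) is consistent with the paper.
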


\begin{proof}
By Theorem~\ref{thm:AST},
\[
\vol(S^3-K) \: \geq \: \frac{1}{2}\,\vtet\, ||D((S^3\cut B)|| \: = \: \frac{1}{2}\vol(D(\guts(S^3\cut B))),
\]
so the claim about volumes follows from the claim about guts.

Lackenby notes that for a prime, twist--reduced alternating diagram $K$, $\guts((S^3-K)\cut B)$ is equal to $\guts((S^3-K_R)\cut B_R)$ \cite[Section~5]{lackenby}.  By \cite[Theorem~13]{lackenby}, $\chi(\guts(S^3\cut B_R)) = \chi(S^3\cut B_R)$, where $\chi(\cdot)$ denotes Euler characteristic.  In fact, in the proof of that theorem, Lackenby shows that a bounding annulus of the characteristic submanifold is either boundary parallel, or separates off a Seifert fibered solid torus.  We review the important features of that proof to determine the form of the Seifert fibered solid tori in the collection $E$ required by this lemma.

In the case that there is a Seifert fibered solid torus, its boundary is made up of at least one annulus on $B_R$ and at least one essential annulus $A$ in $S^3\cut B_R$.  The essential annulus $A$ is either parabolically compressible or parabolically incompressible, as defined in \cite{lackenby} (see also \cite[Definition~4.5]{fkp:gutsjp}).  If it is parabolically compressible, then it decomposes into essential product disks.  Lackenby proves in \cite[Theorem~14]{lackenby} that there are no essential product disks.  But if $A$ is parabolically incompressible, then following the proof of \cite[Theorem~14]{lackenby} carefully, we see that such a Seifert fibered solid torus determines a \emph{cycle of fused units}, as shown in Figure~\ref{fig:fused-units} (left), which is a reproduction of Figure~14 of \cite{lackenby}, with three fused units shown. More generally, there must be at least two fused units; otherwise, we have a M\"obius band and not an essential annulus by the proof of \cite[Lemma~4.1]{fkp:hyperbolic}. 

\begin{figure}
  \includegraphics{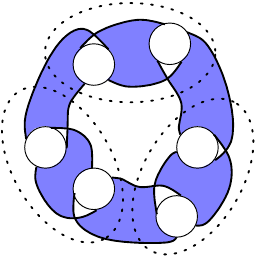} \hspace{.5in}
  \includegraphics{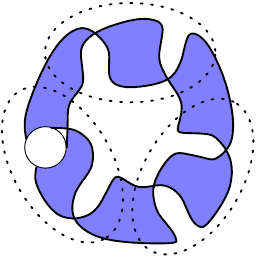}
  \caption{Left: a cycle of three fused units. Right: All but one of the tangles are trivial.}
  \label{fig:fused-units}
\end{figure}

The ellipses in dotted lines in Figure~\ref{fig:fused-units} represent the boundaries of normal squares that form the essential annulus.  Each of these encircles a \emph{fused unit}, which is made up of two crossings and a (possibly trivial) tangle, represented by a circle in the figure.  The Seifert fibered solid torus is made up of two copies of such a figure, one in each polyhedron, and consists of the region exterior to the ellipses.  That is, it meets the blue surface in strips between dotted ellipses, and meets the red surface in a disk in the center of the diagram, and one outside the diagram.  This gives a ball, with fibering of an $I$--bundle, with each interval of $I$ parallel to the blue strips and with its endpoints on the red disks.  The two balls are attached by gluing red faces, giving a Seifert fibered solid torus whose core runs through the center of the two red disks.

Now, we want to show that such a Seifert fibered solid torus only arises in a twist region of blue bigons.  Consider a single cycle of fused units.  Note that if any one of the tangles in that fused unit is non-trivial, then the boundary of the fused unit is a visible Conway sphere bounding at least one knotty tangle.  If more than one of the fused units in the cycle have this property, then by grouping other tangles in the cycle of fused units into these non-trivial tangles, we find that our diagram contains a cycle of tangles, contrary to assumption.  

So at most one of the fused units in the cycle can have a non-trivial tangle.  If both tangles in a fused unit are non-trivial, then by joining one non-trivial tangle to all other tangles in the cycle of fused units, we obtain again a cycle of tangles, contrary to assumption.

So at most one of the tangles in the cycle of fused units is non-trivial.  If all the tangles are trivial, then the diagram is that of a $(2,q)$--torus link, contradicting the fact that it is hyperbolic.  Hence exactly one of the tangles is non-trivial.  This is shown in Figure~\ref{fig:fused-units} (right).  Notice in this case, the cycle of fused units is simply a twist region of the diagram bounding blue bigons.  Notice also that the Seifert fibered solid torus has the form claimed in the statement of the lemma.

Then $\guts( S^3\cut B_R ) = (S^3\cut B_R)-E$.
\end{proof}

To simplify notation, let $M_B = D(S^3\cut B)$ and let $M_R = D(S^3\cut R)$.  

\begin{lemma}\label{lemma:step2-cut2}
Let $K$ be a link with a prime, alternating diagram with checkerboard surfaces $B$ and $R$.  Then the manifold $M_B$ contains an embedded essential surface $DR$ obtained by doubling $R$.
\end{lemma}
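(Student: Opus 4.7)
The plan is to construct $DR$ explicitly from the Menasco--Thistlethwaite checkerboard decomposition and then verify essentialness by a standard cut-and-paste argument, leveraging that both $B$ and $R$ are essential in $S^3-K$ by Menasco--Thistlethwaite.

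First I would realize $M_B$ polyhedrally. By Definition~\ref{def:polyhedra}, $S^3-K$ decomposes into two ideal polyhedra $P_1,P_2$ glued along their blue and red faces. Cutting along $B$ undoes the blue identifications, so $S^3 \cut B$ is built from $P_1 \sqcup P_2$ by gluing only red faces in pairs, with the blue faces constituting $\widetilde{B}$. Doubling across $\widetilde{B}$ realizes $M_B$ as four copies $P_1^{(i)},P_2^{(i)}$ ($i=1,2$), with red faces of $P_1^{(i)}$ glued to those of $P_2^{(i)}$ and blue faces of $P_j^{(1)}$ glued to those of $P_j^{(2)}$. In a single copy of $S^3 \cut B$, the red surface $R$ is the image of the red disk faces, with boundary arcs on $\widetilde{B}$ (the crossing arcs where $R$ originally met $B$) and boundary circles on the cusps of $K$. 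The surface $DR$ is obtained by gluing the two copies of $R$ along their matching boundary arcs on $\widetilde{B}$; since the arcs match combinatorially under the doubling, $DR$ is embedded.

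For essentialness I would proceed in two stages, using in each stage that $R$ is essential in $S^3-K$. Stage one: show $R$ remains essential in $S^3 \cut B$ as a properly embedded surface. A compressing or $\partial$-compressing disk $D$ for a regular neighborhood of $R$ in $S^3 \cut B$ can be reglued across $\widetilde{B}$ to yield a disk of the same type in $S^3-K$, using an innermost-disk reduction on $D \cap B$ in $S^3 - K$ that exploits incompressibility of $B$; this contradicts essentialness of $R$ in $S^3-K$. Stage two: if $\Delta$ is a compressing disk for a regular neighborhood of $DR$ in $M_B$, then the incompressibility of $\widetilde{B}$ in $M_B$ (inherited from essentialness of $B$) together with innermost-disk arguments on $\Delta \cap \widetilde{B}$ restrict $\Delta$ to a disk in one half $S^3 \cut B$, giving a compressing disk for $R$ there and contradicting stage one. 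Boundary compressibility is handled analogously, noting that the parabolic locus of $M_B$ consists of tori coming from the cusps of $K$ in each half, while the doubling locus $\widetilde{B}$ is treated as interior boundary.

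The main obstacle I anticipate is the bookkeeping in stage one when $R$ has boundary on two different pieces of $\partial(S^3 \cut B)$: the cusp tori and $\widetilde{B}$. A would-be $\partial$-compressing disk could have its boundary arc on either piece, and one must verify in each case that the innermost-disk reduction against $B$ in $S^3-K$ preserves the type of the disk (compression versus $\partial$-compression) and respects the parabolic locus. Once this case analysis is organized, the essentialness of $DR$ in $M_B$ propagates cleanly through the cut-and-double operation.
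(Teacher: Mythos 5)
Your overall architecture is close in spirit to the paper's (work with the four checkerboard polyhedra, intersect a putative compressing disk with the blue surface, and reduce the intersection), but there is a genuine gap at the step you describe as ``innermost-disk reduction on $D\cap B$ exploiting incompressibility of $B$.'' Incompressibility of $B$ (or of $\widetilde{B}$ in $M_B$) only lets you remove \emph{closed curves} of intersection between the disk and the blue surface. The real difficulty is removing the \emph{arcs} of intersection, and for that incompressibility is not enough: you need primality of the diagram. In the paper's proof, an outermost arc of $F\cap B$ cuts off a subdisk $F'$ whose boundary meets exactly two edges and two faces of a checkerboard polyhedron; by item~(1) of Definition~\ref{def:polyhedra} this projects to a simple closed curve meeting the diagram exactly twice, primality forces all crossings to lie on one side, and hence the arc of $\partial F'$ on $B$ has both endpoints on a single ideal edge and can be isotoped away, reducing $|F\cap B|$. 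Your proposal never invokes primality inside the argument (only the essentialness of $B$ and $R$), so the outermost-arc reduction that actually drives the proof is missing; without it you cannot conclude $F\cap B=\emptyset$ and the contradiction never materializes.

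A secondary issue is your stage one, ``$R$ remains essential in $S^3\cut B$.'' This is delicate even to state: $R$ cut along $R\cap B$ is a disjoint union of disks (the red regions), so incompressibility is vacuous there and all the content lies in boundary-compressions relative to the decomposition of the frontier of $S^3\cut B$ into $\widetilde{B}$ and the parabolic locus. The paper sidesteps this by arguing directly in $M_B$ with the four polyhedra and running the Menasco-style argument once, for both compressing and boundary-compressing disks of $\widetilde{DR}$ (in the boundary-compression case the outermost subdisk has boundary consisting of a red arc, a blue arc, and an arc on an ideal vertex, and again primality is what lets you isotope it off). If you keep your two-stage structure, you will still need the primality argument at both stages, and you must say precisely what ``essential'' means for the cut-open red surface.
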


\begin{proof}
Note that $M_B$ has an ideal polyhedral decomposition coming from the checkerboard ideal polyhedra of Definition~\ref{def:polyhedra}.  That is, $S^3-K$ is obtained from two polyhedra $P_1$ and $P_2$ with red and blue faces glued.  The manifold $S^3\cut B$ is obtained by cutting along blue faces, or removing the gluing maps on blue faces of $P_i$.

Then the double, $M_B$, is obtained by taking two copies, $P_i^1$ and $P_i^2$, of $P_i$, gluing red faces of $P_1^i$ to red faces of $P_2^i$ by a twist, and gluing blue faces of $P_j^1$ to those of $P_j^2$ by the identity.  Note that $DR$ consists of all red faces of the four polyhedra.

Now, suppose that $DR$ is not essential.  Suppose first that the boundary of a regular neighborhood of $DR$, call it $\widetilde{DR}$, is compressible.  Let $F$ be a compressing disk for $\widetilde{DR}$.  Then $\partial F$ lies on $\widetilde{DR}$, but the interior of $F$ is disjoint from a regular neighborhood of $DR$.  Make $F$ transverse to the faces of the $P_i^j$.  Note now that $F$ must intersect $B$, else $F$ lies completely in one of the $P_i^j$, hence $F$ can be mapped into $S^3-K$ to give a compression disk for $R$ in $S^3-K$.  This is impossible since $R$ is incompressible in $S^3-K$.

Now consider the intersections of $B$ with $F$.  We may assume there are no simple closed curves of intersection, or an innermost such curve would bound a compressing disk for $B$, which we can isotope off using the fact that $B$ is essential.  Thus $B \cap F$ consists of arcs running from $\partial F$ to $\partial F$.

An outermost arc of $B\cap F$ cuts off a subdisk $F'$ of $F$ whose boundary consists of an arc on $\partial F \subset \widetilde{DR}$ and an arc on $B$.  The boundary $\partial F'$ gives a closed curve on the checkerboard colored polyhedron which meets exactly two edges and two faces.  Using the correspondence between the boundary of the polyhedron and the diagram of the link, item~\eqref{item:poly1} of Definition~\ref{def:polyhedra}, it follows that $\partial F'$ gives a closed curve on the diagram of $K$ that intersects the diagram exactly twice.  Because the diagram of $K$ is prime, there can be crossings on only one side of $F'$.  Thus the arc of $\partial F'$ on $B$ must have its endpoints on the same ideal edge of the polyhedron, and we may isotope it off, reducing the number of intersections $|F\cap B|$.  Continuing in this manner, we reduce to the case $F\cap B = \emptyset$, which is a contradiction.

The proof that $\widetilde{DR}$ is boundary incompressible follows a similar idea.  Suppose as above that $F$ is a boundary compressing disk for $\widetilde{DR}$.  Then $\partial F$ consists of an arc $\alpha$ on a neighborhood $N(K)$ of $K$, and an arc $\beta$ on $\widetilde{DR}$.  As before, $F$ must intersect $B$ or it gives a boundary compression disk for $R$ in $S^3-K$.  As before, $F$ cannot intersect $B$ in closed curves, and primality of the diagram of $K$ again implies $F$ cannot intersect $B$ in arcs that cut off subdisks of $F$ with boundary disjoint from $N(K)$.  Hence all arcs of intersection $F\cap B$ have one endpoint on $\alpha = \partial F \cap N(K)$ and one endpoint on $\beta = \partial F\cap \widetilde{DR}$. Again there must be an outermost such arc, cutting off a disk $F' \subset F$ embedded in a single ideal polyhedron with $\partial F'$ consisting of three arcs, one on $R$, one on $B$, and one on an ideal vertex of the polyhedron (coming from $N(K)$).  But then $\partial F'$ must run through a vertex and an adjacent edge, hence it can be isotoped off, reducing the number of intersections of $F$ and $B$. Repeating a finite number of times, again $B\cap F = \emptyset$, which is a contradiction.
\end{proof}

\begin{lemma}\label{lemma:step2-cutE}
Let $K_R$ be a link with a prime, twist--reduced diagram with no red bigons and no cycle of tangles, with checkerboard surfaces $B_R$ and $R_R$, and let $E$ be the Seifert fibered solid tori from Lemma~\ref{lemma:step1-cut1}.
Denote the double of the red surface in $D((S^3\cut B_R)-E)$ by $DR_E$. 
Then $DR_E$ is essential in $D((S^3\cut B_R)-E)$.
\end{lemma}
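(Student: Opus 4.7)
The plan is to mimic the argument of Lemma~\ref{lemma:step2-cut2} for the more complicated manifold $N := D((S^3 \cut B_R) - E)$. Since $K_R$ is prime, alternating, and twist-reduced with no cycle of tangles, it admits the checkerboard polyhedral decomposition of Definition~\ref{def:polyhedra} into $P_1, P_2$; hence $D(S^3 \cut B_R)$ is built from four copies $P_i^j$ of these polyhedra glued as in the proof of Lemma~\ref{lemma:step2-cut2}, and that lemma already tells us $DR_R$ is essential in $D(S^3 \cut B_R)$.

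First I would establish the structure of the doubled Seifert fibered solid tori. Each component $E_k$ is a solid torus in $S^3 \cut B_R$ whose core is a crossing circle encircling a twist region of blue bigons in $K_R$. Its boundary decomposes into an essential annulus (interior to $S^3-K_R$) and annuli on $\widetilde{B_R}$ which are longitudinal with respect to the core, since $B_R$ meets such a twist region in strips parallel to the crossing circle. Doubling along these longitudinal annuli preserves the solid torus structure, so $D(E_k)$ is a solid torus with incompressible torus boundary in $D(S^3 \cut B_R)$. Moreover, from the description in the proof of Lemma~\ref{lemma:step1-cut1}, $R_R \cap E_k$ consists of two disks which are meridional in $E_k$, so $DR_E$ meets $\partial D(E_k)$ in meridional curves of $D(E_k)$.

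Now suppose for contradiction that $\widetilde{DR_E}$ is compressible in $N$ via a disk $F$. Place $F$ in general position with respect to $\widetilde{B_R}$ and $\partial D(E)$. Simple closed curves of intersection are eliminated by innermost-disk surgery using incompressibility of $B_R$ in $S^3-K_R$ and of $\partial D(E_k)$ in $D(S^3 \cut B_R)$ as just established; an essential closed curve of $F \cap \partial D(E_k)$ would necessarily be a meridian of $D(E_k)$, and can be used to surger $F$ and reduce intersections. For outermost arcs of $F \cap \widetilde{B_R}$, the argument from Lemma~\ref{lemma:step2-cut2} applies verbatim: the cut-off subdisk projects to a closed curve meeting the diagram of $K_R$ in at most two edges, which primality forces to bound a trivial region, permitting an isotopy that removes the intersection. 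For outermost arcs of $F \cap \partial D(E_k)$, the cut-off subdisk $F'$ has boundary consisting of an arc on $\widetilde{DR_E}$ and an arc on $\partial D(E_k)$; since $\widetilde{DR_E} \cap \partial D(E_k)$ is a collection of meridional curves, such an arc can be completed on $\partial D(E_k)$ to a nullhomotopic loop via a subarc of a meridian, yielding an isotopy of $F$ that removes the intersection.

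After all intersections have been removed, $F$ lies in a single polyhedron $P_i^j$ disjoint from $D(E)$, hence descends to a compression disk for $R_R$ in $S^3 - K_R$, contradicting essentialness of the red checkerboard surface \cite{menasco-thist:alternating}. Boundary-incompressibility of $\widetilde{DR_E}$ follows by an analogous argument that also tracks intersections with the cusp tori. The main obstacle is the new case of outermost arcs on $\partial D(E)$: one must exploit carefully the meridional nature of $\widetilde{DR_E} \cap \partial D(E_k)$ to produce a valid isotopy rather than generating essential curves on the solid-torus boundary, and this is the key geometric input beyond the proof of Lemma~\ref{lemma:step2-cut2}.
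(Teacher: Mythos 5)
Your overall strategy---run the polyhedral outermost-arc argument of Lemma~\ref{lemma:step2-cut2} and treat $E$ as the new difficulty---is the right instinct, but the mechanism you propose for handling $E$ does not do the work that is actually needed, and your final contradiction fails in the key case. Consider the final step: after all intersections with blue faces are removed, $\partial F$ lies in a single red face of a polyhedron and hence bounds a disk $\delta$ in that face. You conclude that $F$ descends to a compressing disk for $R_R$ in $S^3-K_R$; but if $\delta$ contains one of the removed meridian disks $R_R\cap E_k$, then $\partial F$ is inessential in $R_R$ (it bounds $\delta$) while still being essential in $DR_E$, so incompressibility of $R_R$ yields no contradiction whatsoever. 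This is exactly the case the paper's proof must rule out, and it does so by observing that the core of $E_k$ meets each polyhedron in an arc whose two endpoints lie on \emph{distinct} red faces: such an arc cannot have both endpoints inside $\delta$, which lies in a single red face, yet it cannot escape the region between $F$ and $\delta$ without piercing $F$. That observation is the essential content of the lemma, and it is missing from your argument.

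The same omission undermines your claim that the removal of outermost arcs of $F\cap\widetilde{B_R}$ goes through ``verbatim'': each such removal is an isotopy of $F$ across a ball in a polyhedron bounded by a subdisk of $F$ and a disk on the polyhedron's boundary, and that ball may contain part of $E$, in which case the isotopy pushes $F$ out of $N=D((S^3\cut B_R)-E)$ and is not permitted. The fix is again the crossing-circle argument: a trapped portion of the core would have to have both endpoints on red faces inside a region that a priori cannot contain them without meeting $F$. By contrast, the intersections $F\cap\partial D(E_k)$ on which you spend most of your effort are essentially a non-issue: $\partial D(E_k)$ is a component of $\partial N$, so a compressing disk may be taken disjoint from it from the outset, and the meridional structure of $\widetilde{DR_E}\cap\partial D(E_k)$ is not where the difficulty lies.
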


\begin{proof}
Recall that to prove this, we need to show that the boundary of a regular neighborhood of $DR_E$, call it $\widetilde{DR_E}$, is incompressible and boundary incompressible in $D((S^3\cut B_R)-E)$.

By the previous lemma, we know $\widetilde{DR_R} \subset D(S^3\cut B_R)$ is incompressible.  We will use the fact that $D((S^3\cut B_R)-E)$ is an embedded submanifold of $D(S^3\cut B_R)$, with $DR_E \subset DR_R$, and that the cores of the Seifert fibered solid tori in $E$ are all isotopic to crossing circles for the diagram, encircling blue bigons, by Lemma~\ref{lemma:step1-cut1}.  Such a crossing circle intersects the polyhedra in the decomposition of $D(S^3\cut B_R)$ in arcs with endpoints on distinct red faces. 

Now, suppose there exists a compressing disk $\Phi$ for $\widetilde{DR_E}$.  Then $\Phi \subset D((S^3\cut B_R)-E) \subset D(S^3\cut B_R)$, which has a decomposition into ideal polyhedra coming from $S^3-K$, and so we may isotope $\Phi$, keeping it disjoint from $E$, so that it meets faces and edges of the polyhedra transversely.  The boundary of $\Phi$ lies entirely on $\widetilde{DR_E}$, which we may isotope to lie entirely on the red faces of the polyhedra.  As in the proof of the previous lemma, we consider how $\Phi$ intersects blue faces.

Suppose first that $\Phi$ does not meet any blue faces.  Then $\partial \Phi$ must lie in a single red face.  But then it bounds a disk in that red face.  If it does not bound a disk in $\widetilde{DR_E}$, then that disk must meet $E$.  Then the disk meets the core of a component of $E$, which is a portion of a crossing circle in the polyhedron.  However, since $\Phi \cap E$ is empty, the entire portion of the crossing circle in the polyhedron must lie within $\bdy \Phi$, and thus have both its endpoints in the polyhedron within the disk bounded by $\bdy \Phi$.  This is a contradiction: any crossing circle meets two distinct red faces. 

So suppose $\Phi$ meets a blue face.  Then just as above, an outermost arc of intersection on $\Phi$ defines a curve on the diagram of $S^3-K$ meeting the knot exactly twice.  It must bound no crossings on one side, by primality of $K$.  Then the curve bounds a disk on the polyhedron, and a disk on $\Phi$, so either we may use these disks to isotope away the intersection with the blue face, or the disk in the polyhedron meets $E$.  But again, this implies that a portion of crossing circle lies between $\Phi$ and this disk on the polyhedron.  Again, since the crossing circle has endpoints in distinct red faces, this is impossible without intersecting $\Phi$.  This proves that $\widetilde{DR_E}$ is incompressible.

The proof of boundary incompressibility is very similar.  If there is a boundary compressing disk $\Phi$, then $\bdy \Phi$ consists of an arc $\beta$ on the boundary of $D((S^3\cut B_R) -E)$ and an arc $\alpha$ on red faces of the polyhedra.  A similar argument to that above implies that $\alpha$ cannot lie in a single red face: since $\Phi$ does not meet crossing circles at the cores of $E$, $\bdy \Phi$ would bound a disk on $\widetilde{DR_E}$.  So $\alpha$ intersects a blue face of the polyhedron.  Then consider an outermost blue arc of intersection.  As above, it cannot cut off a disk with boundary consisting of exactly one red arc and one blue arc.  So it cuts off a disk $\Phi'$ on $\Phi$ with boundary a red arc $\alpha'$, an arc $\beta'$ on the boundary of $D((S^3\cut B_R)-E)$, and a blue arc $\gamma'$.  This bounds a disk on the boundary of the polyhedron.  The interior of the disk cannot intersect $E$, else $\Phi'$ would intersect a crossing circle.  Hence we may use this disk to isotope away this intersection.  Thus $\widetilde{DR_E}$ is boundary incompressible, and the lemma holds. 
\end{proof}

\begin{lemma}\label{lemma:step2-guts-nobigons}
Let $K$ be a hyperbolic link with prime, alternating, twist--reduced diagram with no bigons and no cycle of tangles.  Let $B$ and $R$ denote its checkerboard surfaces.  Let $M_B = D(S^3\cut B)$, and $DR$ be the double of $R$ in $M_B$ as above.  Then
\[
\guts(M_B\cut DR) \: = \: M_B\cut DR.
\]
That is, in the annulus version of the JSJ decomposition of $M_B\cut DR$, there are no $I$--bundle or Seifert fibered solid torus components.
\end{lemma}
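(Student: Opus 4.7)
The plan is to reduce the statement to a careful analysis of essential product disks (EPDs) in the checkerboard ideal polyhedra of Definition~\ref{def:polyhedra}, following the strategy of Lackenby~\cite{lackenby} but carried out in the doubled manifold $M_B\cut DR$. If the guts were a proper subset of $M_B\cut DR$, then the JSJ decomposition would supply an essential annulus $A$ in $M_B\cut DR$, disjoint from the parabolic locus, bounding either an $I$--bundle component or a Seifert fibered solid torus.

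First I would describe the ideal polyhedral decomposition of $M_B\cut DR$. Writing $S^3-K = P_1\cup P_2$ as in Definition~\ref{def:polyhedra}, the manifold $S^3\cut B$ is obtained from $P_1,P_2$ by identifying red faces only (blue faces become boundary), and doubling along blue gives $M_B$ built from two mirror copies $P_i^1,P_i^2$ with red identifications within each copy and blue identifications across copies. Cutting along $DR$ removes all red identifications, so $M_B\cut DR$ is built from eight ideal polyhedra with only blue face identifications (each blue face identified to its mirror partner). The parabolic locus consists of annuli coming from ideal vertices of the $P_i^j$, and the red boundary surface $\widetilde{DR}$ is the union of the remaining red faces.

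Next I would put $A$ in normal form with respect to the polyhedra, so that $A$ meets the interiors of the polyhedra in disks whose boundaries consist of arcs on red faces, arcs on blue faces, and arcs on the parabolic locus. Standard innermost-disk and outermost-arc arguments, together with incompressibility of $\widetilde{DR}$ in $M_B$ (Lemma~\ref{lemma:step2-cut2}) and primality of the diagram (exactly as in the proofs of Lemmas~\ref{lemma:step2-cut2} and~\ref{lemma:step2-cutE}), eliminate closed curves of intersection with the blue faces and eliminate arcs with both endpoints on the same face. This forces an outermost subdisk of $A$ to be an essential product disk in some $P_i^j$: a disk meeting exactly two ideal vertices and two faces, whose two arcs on faces are on surfaces of opposite color (or the same color, after further outermost reductions).

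Finally I would quote Lackenby's classification of EPDs in a prime, twist-reduced alternating checkerboard polyhedron (\cite[Sections 4--5]{lackenby}, and the refinement in the proof of Lemma~\ref{lemma:step1-cut1}): every EPD either lies in a single polyhedron and encircles a bigon of the corresponding color, or spans several polyhedra via a cycle of normal squares and therefore arises from a visible Conway sphere bounding knotty tangles on both sides, i.e.\ a cycle of tangles. Both outcomes are forbidden by hypothesis, so no such EPD exists and no essential annulus $A$ exists. Hence the annulus JSJ decomposition of $M_B\cut DR$ has no $I$--bundle or Seifert fibered pieces, and $\guts(M_B\cut DR)=M_B\cut DR$.

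The main obstacle will be the outermost-arc reduction in this doubled setting: because $M_B\cut DR$ is assembled from eight copies of the polyhedra, I need to verify that an EPD which lives in one $P_i^j$ really does descend to an EPD in the underlying checkerboard polyhedron of $K$ (so that Lackenby's diagrammatic classification applies), and that EPDs which cross multiple $P_i^j$ correspond to visible Conway spheres in $K$ rather than some artifact of the doubling. This is where the equivariance of the doubled decomposition under the reflection across $\widetilde{B}$ and $\widetilde{DR}$ must be used to fold a multi-polyhedron EPD back into a cycle of fused units in the original diagram.
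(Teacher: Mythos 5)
There is a genuine gap, and it comes from misidentifying the combinatorial pieces into which the essential annulus decomposes. The annulus $A$ supplied by the JSJ decomposition of $M_B\cut DR$ is, by hypothesis, disjoint from the parabolic locus, and both of its boundary components lie on $\widetilde{DR}$. When you put $A$ into normal form with respect to the polyhedra (of which there are four here, not eight --- eight copies only appear later in the double $D(M_B\cut DR)$), the arcs of $A\cap B$ cut it into squares each having two sides on blue faces and two sides on red faces; these squares never meet the ideal vertices. They are therefore \emph{not} essential product disks, and Lackenby's EPD results (\cite[Theorem~14]{lackenby}), which concern disks with two boundary arcs on the parabolic locus, are not the applicable tool. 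The paper's argument instead exploits the fact that in $M_B$ the blue faces of $P_i^1$ and $P_i^2$ are glued by the \emph{identity}: superimposing consecutive squares $S_{j-1},S_j,S_{j+1}$ on a single polyhedron shows that each boundary component of $A$ lies entirely in one red face, so the squares close up into a cycle of normal squares in the diagram, each encircling a tangle $T_i$. Your proposal gestures at this ("cycle of normal squares") but routes it through an EPD classification that does not apply to an annulus disjoint from the parabolic locus.

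Even granting the cycle-of-squares picture, the conclusion "both outcomes are forbidden by hypothesis" is too quick, because Definition~\ref{def:CycleOfTangles} requires \emph{knotty} tangles on both sides of a visible Conway sphere. A cycle of normal squares in which some $T_i$ are trivial or single crossings is not automatically a cycle of tangles, so the hypothesis does not immediately kill it. The paper's proof needs the following endgame, absent from your proposal: a trivial $T_i$ contradicts normality of the adjacent square; the no-bigons hypothesis forces any twist-region tangle to be a single crossing; two adjacent single crossings would create a bigon; amalgamating a single crossing into a neighboring tangle reproduces a cycle of tangles unless there are exactly two tangles, one a single crossing and one knotty; and in that residual case the annulus $S_1\cup S_2$ is boundary-parallel (parallel to the double of the ideal vertex at that crossing), hence not essential. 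Without this case analysis the argument does not close.
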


\begin{proof}
The manifold $M_B\cut DR$ is obtained by gluing four copies of the checkerboard ideal polyhedra of Definition~\ref{def:polyhedra}, by gluing $P_i^1$ to $P_i^2$ by the identity on blue faces, $i=1, 2$, and leaving red faces unglued.  

If $M_B\cut DR$ does contain $I$--bundle or Seifert fibered solid torus components, then there must be an essential annulus $A$ in $M_B\cut DR$ disjoint from the parabolic locus.  Suppose $A$ is such an annulus. Then $A$ has boundary components on $\widetilde{DR}$ and interior disjoint from a neighborhood of $R$.  Put $A$ into normal form with respect to the polyhedra $P_i^j$ of $M_B\cut DR$.  Because $A$ is essential, it must intersect $B$ in arcs running from one component of $\bdy A$ to the other, cutting $A$ into an even number of squares $S_1, S_2, \dots, S_{2n}$ alternating between $P_i^1$ and $P_i^2$, for fixed $i$.

The square $S_2$ is glued along some arc $\alpha_1$ in $A\cap B$ to the square $S_1$, and $S_2$ is glued along another arc $\alpha_2$ in $A\cap B$ to the square $S_3$.  The squares $S_1$ and $S_3$ lie in the same polyhedron $P_i^j$.  Superimpose $\bdy S_2$ on that polyhedron. Because $S_1$ and $S_3$ are glued by the identity on $B$, the arcs of all three squares coming from one component of $\bdy A$ lie on the same red face of the polyhedron.  Similarly for the other component of $\bdy A$.  The same argument applies to any three consecutive squares, showing in general that one component of $\bdy A$ lies entirely in two identical red faces of $P_i^1$ and $P_i^2$, and these are glued by the identity on adjacent blue faces. 
The result is shown in Figure~\ref{fig:cycle-tangles} (left). 

\begin{figure}
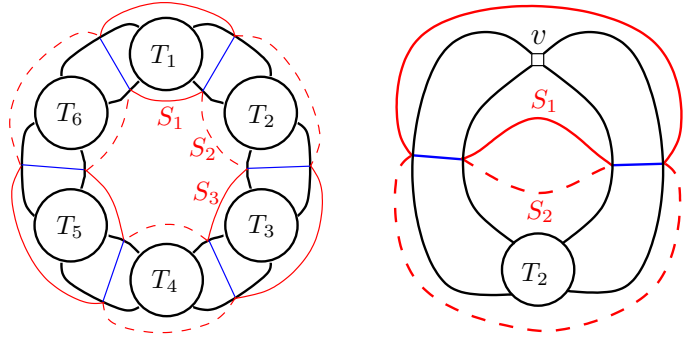

  \input{figures/cycle-tangles-sq.pspdftex}
  \hspace{.25in}
  \input{figures/cycle-tangles2Tangles.pspdftex}
  \caption{Left: Squares making up an essential annulus. Right: The case when there are only two squares. Note both can be isotoped to encircle the ideal vertex $v$ shown.}
  \label{fig:cycle-tangles}
\end{figure}

By hypothesis, we have no cycle of tangles in our diagram.  Thus one of the $T_i$ in Figure~\ref{fig:cycle-tangles} must be trivial or a part of a twist region.  If trivial, then the square $S_i$ is not normal, which is a contradiction.  But because $K$ has a diagram with no bigons, $T_i$ cannot contain bigons, so $T_i$ must be a single crossing.  Note neither $T_{i-1}$ nor $T_{i+1}$ can be a single crossing (it could be that $T_{i-1}=T_{i+1}$), else $T_i$ and this tangle would form a bigon.  Thus $T_i \cup T_{i-1}$ is a tangle that is non-trivial, and does not bound a portion of a twist region.  If $T_{i+1}$ is a distinct tangle, then we have a cycle of tangles (possibly after performing this same move elsewhere to move single crossings into larger tangles).

The only remaining possibility is that there are just two tangles, $T_1$ and $T_2$, and $T_1$ is a single crossing, and $T_2$ is a knotty tangle.  But then $S_1$ and $S_2$ can both be isotoped to encircle the ideal vertex corresponding to the single crossing, as in Figure~\ref{fig:cycle-tangles} (right). Then $S_1 \cup S_2$ is a boundary parallel annulus in $M_B \cut DR$, parallel to the double of the ideal vertex corresponding to this single crossing.  A boundary parallel annulus is not essential.
\end{proof}

\begin{lemma}\label{lemma:step2-guts}
  Let $K_R$ be a hyperbolic link with prime, alternating, twist--reduced diagram with no red bigons and no cycle of tangles, with checkerboard surfaces $B_R$ and $R_R$, and let $E$ be the Seifert fibered solid tori from Lemma~\ref{lemma:step1-cut1}.  Finally, let $K_{BR}$ be the new link obtained by replacing any adjacent blue bigons from the diagram of $K_R$ with a single crossing, and let $B_{BR}$ and $R_{BR}$ be the checkerboard surfaces of $K_{BR}$. 
Then
\[
\guts(D((S^3\cut B_R)-E)\cut DR_R) = \guts( D(S^3\cut B_{BR})\cut DR_{BR}) = 
D(S^3\cut B_{BR})\cut DR_{BR}.
\]
Thus
\[
\vol(D((S^3\cut B_R)-E)) \: \geq \: \frac{1}{2}
\vol( D( D(S^3\cut B_{BR})\cut DR_{BR}) ).
\]
\end{lemma}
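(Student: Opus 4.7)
The plan is to establish the two guts equalities and then apply Theorem \ref{thm:AST} to obtain the volume bound.

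The second equality, $\guts(D(S^3\cut B_{BR})\cut DR_{BR}) = D(S^3\cut B_{BR})\cut DR_{BR}$, follows directly from Lemma \ref{lemma:step2-guts-nobigons} applied to $K_{BR}$. Note that $K_{BR}$ has no bigons of either color: red bigons are absent because $K_R$ has none, and blue bigons are absent by construction. By Corollary \ref{cor:remove-bigons}, $K_{BR}$ remains prime, alternating, twist-reduced, with no cycle of tangles, so the hypotheses of Lemma \ref{lemma:step2-guts-nobigons} are satisfied, and the conclusion is immediate.

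For the first equality, I would show that removing the Seifert fibered solid tori $E$ from $S^3\cut B_R$ corresponds, at the level of guts, to the diagrammatic operation of collapsing each twist region of blue bigons in $K_R$ to a single crossing, i.e.\ the passage from $K_R$ to $K_{BR}$. By Lemma \ref{lemma:step1-cut1}, the core of each component of $E$ is isotopic to a crossing circle encircling such a twist region; after removing $E$ and tracking the red surface's identifications, one obtains a manifold whose double, cut along the doubled red surface, has the same JSJ-decomposition as $D(S^3\cut B_{BR})\cut DR_{BR}$. In spirit this is the observation in \cite[Section 5]{lackenby} that the guts of a checkerboard complement is invariant under removal of bigons, transposed to the red-cut double. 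One must also verify that any additional Seifert fibered or $I$-bundle components that could arise from the doubling-and-cutting occur identically on both sides, which is ensured by the argument in Lemma \ref{lemma:step2-guts-nobigons} (no cycle of tangles prevents spurious essential annuli from appearing).

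Finally, Theorem \ref{thm:AST} applied to $N = D((S^3\cut B_R)-E)$ with the doubled red surface as the essential surface (essentiality from Lemma \ref{lemma:step2-cutE}) yields
\[
\vol(N) \geq \tfrac{1}{2}\,\vol(D(\guts(N\cut DR_R))),
\]
and substituting the guts equalities above gives the claimed volume inequality. The main obstacle will be the first guts equality: one must carefully trace the ideal polyhedral decomposition of $(S^3\cut B_R)$ through both operations (removal of $E$, and the diagrammatic passage $K_R \to K_{BR}$), and check that the Seifert fibered solid torus of Lemma \ref{lemma:step1-cut1} is the only non-guts piece, so that the two constructions produce matching guts. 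The essential-annulus analysis of Lemma \ref{lemma:step2-guts-nobigons}, combined with the absence of cycles of tangles, is what makes this identification work.
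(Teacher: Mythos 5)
Your proposal is correct and follows essentially the same route as the paper: the second equality comes from Lemma~\ref{lemma:step2-guts-nobigons} applied to the bigon-free link $K_{BR}$ (using Corollary~\ref{cor:remove-bigons} to check its hypotheses), the volume bound comes from Theorem~\ref{thm:AST} with essentiality supplied by Lemma~\ref{lemma:step2-cutE}, and the first equality is handled by showing that the non-guts pieces are exactly the $I$--bundle/Seifert fibered pieces sitting over the blue bigon twist regions, so that collapsing them reproduces the polyhedra of $K_{BR}$, in the spirit of Section~5 of Lackenby. The paper fills in the step you flag as the main obstacle by exhibiting the complement of $E$ in the polyhedra explicitly as fibered boxes around single bigons (which lie in the characteristic $I$--bundle) plus the region containing the one nontrivial tangle, but your outline identifies the same mechanism.
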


\begin{proof}
Because $K_{BR}$ has no bigons, Lemma~\ref{lemma:step2-guts-nobigons} implies the last equality:
\[
\guts(D(S^3\cut B_{BR})\cut DR_{BR}) \: = \: D(S^3\cut B_{BR})\cut DR_{BR}.
\]
Hence it remains to show the first equality.

Recall that $S^3\cut B_R$ is obtained by gluing two checkerboard ideal polyhedra along their red faces, leaving blue faces unglued.  The Seifert fibered solid tori of $E$ lie in those polyhedra in blue twist regions, meeting the bigons of the twist region and the adjacent red faces, as on the right of Figure~\ref{fig:fused-units}.

When we double along the blue surface to construct $D(S^3\cut B_R)$, the Seifert fibered solid tori in $E$ glue to give a Seifert fibered submanifold, with boundary in $D(S^3\cut B_R)$ an essential torus obtained by gluing two annuli $A$, where $A$ is made up of squares bounding the fused units in Figure~\ref{fig:fused-units}.  Thus
$(S^3\cut B_R)-E$ consists of portions of the polyhedra that lie outside of $E$.  For each such solid torus, in each polyhedron these consist of regions bounding a single bigon, and one region bounding the fused unit with the non-trivial tangle.

Note that each region consisting of a square encircling a single bigon is fibered.  The bigon itself is fibered, with fibers meeting each edge of the bigon in a single point and parallel to the ideal vertex, which is part of the parabolic locus.  Similarly, the square encircling the bigon gives a fibered disk.  Together, these two squares bound a fibered box, where fibers have one endpoint on one red face, one endpoint on the other, and are parallel to the fibers of the bigon.  

To obtain $D((S^3\cut B_R)-E)\cut DR_R$, we take four polyhedra, and glue them in pairs by the identity along their blue faces.  Notice that the fibered boxes at blue twist regions must belong to the characteristic $I$--bundle of the cut manifold, so these twist regions cannot be part of the guts of this manifold.  Therefore the guts is obtained by considering only the quadrilateral bounding the non-trivial tangle.  The corresponding polyhedron is equivalent to the polyhedron obtained by replacing the blue twist region with a single bigon.

But now consider any remaining blue bigons in the diagram, including this, and including blue bigons from twist regions that did not give rise to Seifert fibered solid tori in the previous step.  As before, a neighborhood of any such bigon and the parabolic locus is an $I$--bundle, and is part of the characteristic $I$--bundle of the manifold.  So if $Y$ is a neighborhood of the union of the parabolic locus and all the bigons in the polyhedra, then the guts of $D((S^3\cut B_R)-E)\cut DR_R :=N_R$ is the guts of the closure of $N_R - Y$, where the latter is given parabolic locus ${\rm{cl}}(\bdy Y - \bdy N_R)$.

As in the beginning of Section~5 of~\cite{lackenby}, this can be identified explicitly.  If we replace the bigons of each blue twist region by a single ideal vertex of the polyhedron, then the remaining portions of the polyhedra will be identical.  But this is exactly the polyhedron of the link with no bigons $K_{BR}$.  The desired result follows.  
\end{proof}

\begin{lemma}\label{lemma:step2-homeo}
When $K$ is a link with prime, alternating, twist--reduced diagram with no bigons and no cycle of tangles, the doubles of manifolds $M_B\cut DR$ and $M_R\cut DB$ admit isometric finite volume hyperbolic structures.  In these structures, the surfaces coming from $DR$ and $DB$ are totally geodesic and meet at angle $\pi/2$.  The manifolds are both obtained by gluing eight isometric copies of a right angled hyperbolic ideal polyhedron $P$, and this polyhedron is equivalent to the checkerboard ideal polyhedron of $K$.
\end{lemma}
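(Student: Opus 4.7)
The strategy is to (i) describe both doubles explicitly as $8$-fold gluings of the checkerboard polyhedron with all face pairings by the identity, (ii) build a right-angled ideal hyperbolic structure on $P$ from its combinatorial type using Andreev's theorem, and (iii) assemble using the Poincar\'e polyhedron theorem.

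First I would unwind the cutting/doubling operations combinatorially. Starting from the Menasco decomposition $S^3-K = P_1 \cup P_2$ with red and blue face identifications (with a twist), cutting along $B$ removes the blue identifications, so $S^3 \cut B$ is $P_1 \cup P_2$ with only red faces glued; doubling along $\widetilde B$ produces four copies of $P$ in which blue faces are now paired by the identity (since doubling replaces the original gluing by its ``mirror''), while red faces remain twist-identified in two pairs. Cutting this along $DR$ removes the red identifications, and doubling along $\widetilde{DR}$ finally replaces the twist gluings on red faces by identity gluings of new paired copies. The outcome is that $D(M_B \cut DR)$ consists of eight combinatorially identical copies of $P$ in which every blue face is paired to a matching blue face by the identity and every red face is paired to a matching red face by the identity. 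Since $B$ and $R$ play symmetric roles in this construction, applying the same unwinding to $M_R \cut DB$ yields the same combinatorial pattern, and I would record an explicit face-pairing bijection to conclude that the two cut-and-doubled manifolds are homeomorphic via a map carrying $DR$ to $DR$ and $DB$ to $DB$.

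Next I would produce the right-angled hyperbolic structure on $P$. The $1$-skeleton of $P$ is the $4$-valent projection graph $G(K)$ on $S^2$. By Andreev's theorem in its ideal version (equivalently, Rivin's characterization of ideal hyperbolic polyhedra by dihedral angles, or the classical theorem on existence of orthogonal circle patterns realizing a prescribed nerve), there exists an ideal hyperbolic polyhedron with combinatorial type $G(K)$ and all dihedral angles equal to $\pi/2$ if and only if $G(K)$ has no prismatic $3$-circuit and no prismatic $4$-circuit. Here I would check that the diagrammatic hypotheses translate exactly to these combinatorial conditions: connectedness of the diagram gives connectedness of $G(K)$; primeness of the diagram rules out prismatic $3$-circuits (a separating $3$-circuit with crossings on both sides would give a prime decomposition); the twist-reduced condition together with the no-cycle-of-tangles hypothesis rules out prismatic $4$-circuits (a prismatic $4$-circuit that is not boundary-parallel would either be a twist-reduction violation or a visible Conway sphere bounding two knotty tangles); and the absence of bigons ensures each face is at least a triangle, so Andreev's face-degree hypotheses are met. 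This gives a unique (up to isometry) right-angled ideal polyhedron $P$ realizing the combinatorics.

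Finally, the assembly step is routine. With every dihedral angle equal to $\pi/2$, the identity face pairings on the eight copies meet four copies of $P$ around each edge, giving edge angle sum $4\cdot\pi/2 = 2\pi$; the ideal vertex links are Euclidean tori because each vertex of $P$ is $4$-valent with right angles. The Poincar\'e polyhedron theorem (see \cite{epstein-petronio}) then produces a complete finite volume hyperbolic structure on the glued manifold. In this structure, the blue faces assemble into a closed totally geodesic surface and the red faces assemble into another closed totally geodesic surface, and they meet precisely where two colored faces of $P$ share an edge, hence at the dihedral angle $\pi/2$. Running the identical argument on $D(M_R\cut DB)$ yields an isometric hyperbolic structure. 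The anticipated main obstacle is the combinatorial verification that ``prime, twist-reduced, no bigons, no cycle of tangles'' implies Andreev's hypotheses on prismatic circuits; this requires a careful case analysis of short cycles in $G(K)$ of the same flavor as Lemma~\ref{lemma:step2-guts-nobigons}, translating each potential prismatic circuit into a forbidden diagrammatic feature.
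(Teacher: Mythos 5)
Your proposal is correct in outline but takes a genuinely different route from the paper. The paper does not construct the right-angled structure on $P$ directly: instead it invokes Lemma~\ref{lemma:step2-guts-nobigons} to conclude that $M_B\cut DR$ is all guts (no $I$--bundle or Seifert fibered pieces), so that its double is hyperbolic with $DR$ totally geodesic by Thurston's hyperbolization of Haken manifolds; it then carries out the same combinatorial unwinding you describe (your eight-copy identification with the explicit face-pairing diagram matches the paper's exactly) to show $D(M_B\cut DR)\cong D(M_R\cut DB)$, applies Mostow--Prasad rigidity to upgrade the homeomorphism to an isometry, deduces that \emph{both} $DR$ and $DB$ are geodesic in the single hyperbolic structure, and gets the right angle from the fact that a geodesic surface invariant under the doubling reflection meets the fixed locus orthogonally. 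Cutting along both geodesic surfaces then \emph{produces} the right-angled polyhedron $P$ as a consequence, rather than assuming it as input. Your Andreev/Rivin-plus-Poincar\'e approach is more constructive and, if completed, yields the polyhedral structure explicitly without appealing to geometrization or rigidity; the paper's approach buys the hyperbolicity and geodesicity "for free" from the guts computation it needs anyway, at the cost of being less explicit about where the right-angled polyhedron comes from.

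The one soft spot in your plan is the translation of the diagrammatic hypotheses into Andreev's conditions, which you correctly flag as the main obstacle but sketch imprecisely. Prismatic $3$-circuits are not ruled out by primeness: a circuit in the dual of the projection graph corresponds to a closed curve meeting the link diagram transversely in an odd number of strands, which is impossible by a parity (Jordan curve) argument; primeness is instead what rules out essential $2$-circuits. The real content is the $4$-circuit analysis: a non-vertex $4$-circuit is a visible Conway sphere, and you must check that after bigon removal every such sphere either encircles a single crossing or bounds knotty tangles on both sides (excluded by the no-cycle-of-tangles hypothesis, which persists under bigon removal by Lemma~\ref{lemma:remove-bigons}), with the degenerate case of a trivial tangle on one side handled separately. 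That case analysis is essentially of the same difficulty as the paper's Lemma~\ref{lemma:step2-guts-nobigons}, so your route does not avoid the combinatorial work --- it relocates it.
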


\begin{proof}
By Lemma~\ref{lemma:step2-guts-nobigons}, $M_B\cut DR$ is all guts, with no $I$--bundle or Seifert fibered pieces.  Thus when we double it, the resulting manifold admits a complete finite volume hyperbolic structure, in which $DR$ is a totally geodesic surface.  The same argument applies to the double of $M_R\cut DB$.

Now we show that these two doubles are homeomorphic.  Both are homeomorphic to eight copies of the checkerboard polyhedron $P$ coming from $K$, glued by identity maps on faces, as follows.  In particular, recall that $M_B\cut DR = D(S^3\cut B)\cut DR$ is obtained by taking four copies of the polyhedron $P$, denoted by $P_1^1$, $P_1^2$, $P_2^1$, and $P_2^2$, with red faces unglued, and blue faces of $P_j^1$ glued to those of $P_j^2$ by the identity, for $j=1, 2$.  When we double across $DR$, we obtain four more polyhedra, $\bar{P}_j^i$, $i,j=1,2$, with blue faces of $\bar{P}_j^1$ glued to blue faces of $\bar{P}_j^2$ by the identity for each $i=1, 2$, and red faces of $\bar{P}_j^i$ glued to red faces of $P_j^i$ by the identity, for $i,j=1,2$.  
To form the double of $M_R\cut DB$, we repeat the process, only cut and glue along red first, then along blue.  Again we obtain eight copies of the checkerboard polyhedron, denoted $Q_i^j$ and $\bar{Q}_i^j$, for $i,j =1, 2$, with $Q_i^1$ glued to $Q_i^2$ by the identity map on red faces, $\bar{Q}_i^1$ glued to $\bar{Q}_i^2$ by the identity on red faces, and $\bar{Q}_i^j$ glued to $Q_i^j$ by the identity on blue faces.  This gluing is summarized in the following diagram.

\[\xymatrixcolsep{5pc}\xymatrix{
  {P_i^1}\ar@{<->}[d]_{\mbox{red}}\ar@{<->}[r]^{\mbox{blue}}
  & {P_i^2}\ar@{<->}[d]^{\mbox{red}}
  & {Q_i^1}\ar@{<->}[d]_{\mbox{blue}}\ar@{<->}[r]^{\mbox{red}}
  & {Q_i^2}\ar@{<->}[d]^{\mbox{blue}}
  \\
  {\bar{P}_i^1}\ar@{<->}[r]^{\mbox{blue}} 
        & {\bar{P}_i^2}
  & {\bar{Q}_i^1}\ar@{<->}[r]^{\mbox{red}} 
        & {\bar{Q}_i^2}
}\]

Now build a homeomorphism by mapping by the identity between $P$'s and $Q$'s, rotating the diagram on the left to match that on the right.  That is, map $P_i^1$ to $Q_i^2$, map $\bar{P}_i^1$ to $Q_i^1$, map $P_i^2$ to $\bar{Q}_i^2$, and map $\bar{P}_i^2$ to $\bar{Q}_i^1$, for $i=1,2$.  These maps give the identity on the interiors of the polyhedra, the identity on interiors of faces, and extend to identity maps on edges between faces.  Thus they give a homeomorphism of spaces.  

Since $D(M_R\cut DB)$ and $D(M_B\cut DR)$ are finite volume hyperbolic manifolds, by Mostow--Prasad rigidity \cite{mostow, prasad}, the doubles are actually isometric.  Thus $DB$ and $DR$ are totally geodesic in each of them.  Hence cutting along these totally geodesic surfaces yields eight copies of $P$, each with totally geodesic red and blue faces.

Finally, since $DR$ is geodesic when we double along $DB$, it must intersect $DB$ at right angles.  Thus the surfaces meet everywhere at angle $\pi/2$, as claimed.  
\end{proof}

\begin{theorem}\label{thm:volbound}
Suppose $K$ is a link with a prime, alternating, twist--reduced, diagram with no cycle of tangles.  
Let $K'$ be the link with diagram obtained from that of $K$ by replacing any adjacent red bigons with a single crossing, and by replacing any adjacent blue bigons with a single crossing. 
Let $P(K')$ denote the checkerboard polyhedron coming from $K'$, given an ideal hyperbolic structure with all right angles.  Then
\[
\vol(S^3-K) \: \geq \: 2 \, \vol(P(K')).
\]
\end{theorem}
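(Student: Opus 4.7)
The plan is to chain together the sequence of lemmas just established by invoking Theorem~\ref{thm:AST} twice: once on the link complement $S^3-K$ using the blue checkerboard surface, and a second time on the resulting partially doubled manifold using the doubled red surface. At the end of the chain, Lemma~\ref{lemma:step2-homeo} will present the terminal manifold as eight isometric copies of the right-angled polyhedron $P(K')$, and the two factors of $1/2$ picked up from Theorem~\ref{thm:AST} will combine with this count of eight to yield exactly the factor of $2$ in the statement.

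Concretely, I would first apply Theorem~\ref{thm:AST} with $N = S^3-K$ and $\Sigma = B$; combined with the guts identification of Lemma~\ref{lemma:step1-cut1}, this yields
\[
\vol(S^3-K) \ \geq\ \tfrac{1}{2}\vol\bigl(D((S^3\cut B_R)-E)\bigr).
\]
Next, I would apply Theorem~\ref{thm:AST} to $N' = D((S^3\cut B_R)-E)$ with essential surface $DR_E$, which is essential by Lemma~\ref{lemma:step2-cutE}. Since Lemma~\ref{lemma:step2-guts} identifies $\guts(N' \cut DR_E)$ as $D(S^3\cut B_{BR}) \cut DR_{BR}$, this gives
\[
\vol(N') \ \geq\ \tfrac{1}{2}\vol\bigl(D\bigl(D(S^3\cut B_{BR}) \cut DR_{BR}\bigr)\bigr).
\]
By Corollary~\ref{cor:remove-bigons}, the link $K_{BR} = K'$ is itself prime, alternating, and twist-reduced with no cycle of tangles and no bigons, so Lemma~\ref{lemma:step2-homeo} applies to $K'$ and exhibits the manifold on the right as a union of eight isometric copies of the right-angled ideal polyhedron $P(K')$, of total volume $8\vol(P(K'))$. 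Multiplying through gives
\[
\vol(S^3-K) \ \geq\ \tfrac{1}{2}\cdot\tfrac{1}{2}\cdot 8\,\vol(P(K')) \ =\ 2\,\vol(P(K')),
\]
as desired.

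All of the serious geometric and topological content has been handled upstream: the guts computations in Lemmas~\ref{lemma:step2-guts-nobigons} and~\ref{lemma:step2-guts}, the essentiality arguments for the doubled red surfaces, and the Mostow-rigidity identification in Lemma~\ref{lemma:step2-homeo} of the doubled cut manifold as eight copies of an all-right-angled polyhedron. The only delicate point at the level of the theorem itself is bookkeeping — making sure that each application of Theorem~\ref{thm:AST} uses an honestly essential surface (so that the doubled manifold on the right is hyperbolic), and that the polyhedron count after two rounds of cutting and doubling is precisely eight rather than four. Assuming this bookkeeping is correctly carried out, the argument reduces to the three-line display above.
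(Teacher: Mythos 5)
Your proposal is correct and follows essentially the same route as the paper: Lemma~\ref{lemma:step1-cut1} for the first application of Theorem~\ref{thm:AST} along $B$, Lemmas~\ref{lemma:step2-cutE} and~\ref{lemma:step2-guts} for the second application along the doubled red surface, and Lemma~\ref{lemma:step2-homeo} to identify the final double as eight copies of $P(K')$, with $\tfrac12\cdot\tfrac12\cdot 8=2$. The only cosmetic difference is that the paper first runs the no-bigon case separately before the general chain you describe, whereas you go straight to the general case (correctly noting via Corollary~\ref{cor:remove-bigons} that $K'$ satisfies the hypotheses of Lemma~\ref{lemma:step2-homeo}); this subsumes the no-bigon case and is fine.
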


\begin{proof}
Lemmas~\ref{lemma:step1-cut1}, \ref{lemma:step2-cut2}, and~\ref{lemma:step2-guts} imply that when $K$ admits a prime, alternating diagram with no bigons and no cycle of tangles, 
\[
\vol(S^3-K) \: \geq \: \frac{1}{4}\vol(D(M_B\cut DR)).
\]
Lemma~\ref{lemma:step2-homeo} further implies that $\vol(D(M_B\cut DR)) = 8\,\vol(P(K))$.
Hence $\vol(S^3-K) \geq 2\,\vol(P(K))$.

If $K$ contains bigons, let $K_B$ and $K_R$ denote the link with the blue and red bigons removed, respectively, and let $K_{BR} = K'$ denote the link with both blue and red bigons removed. Let $B_B$ and $R_B$ denote the checkerboard surfaces of $K_B$, let $B_R$ and $R_R$ denote the checkerboard surfaces of $K_R$, and let $B_{BR}$ and $R_{BR}$ denote the checkerboard surfaces of $K_{BR}$.

Lemma~\ref{lemma:step1-cut1} implies that
\[ \vol(S^3-K) \: \geq \: \frac{1}{2}\vol(D((S^3\cut B_R)-E_R)), \]
where $E_R$ is a collection of Seifert fibered solid tori.
By Lemma~\ref{lemma:step2-guts},
\[
\vol(D((S^3\cut B_R)-E_R)) \: \geq \: \frac{1}{2}\vol(D(D(S^3\cut B_{BR})\cut DR_{BR})).
\]
By Lemma~\ref{lemma:step2-homeo}, $\vol(D(D(S^3\cut B_{BR})\cut DR_{BR})) = 8\,\vol(P(K_{BR}))$.
Thus
\[
\vol(S^3-K) \: \geq \: 2\,\vol(P(K_{BR})).
\]
\end{proof}

\section{Geometrically maximal knots}\label{sec:vollims}

In this section, we complete the proof of Theorem~\ref{thm:geommax}.

Since the number of crossings in a diagram of $K$ is equal to the
number of ideal vertices of $P(K)$ by item~\eqref{item:poly1} of
Definition~\ref{def:polyhedra}, our goal is to bound the ratio of the
volume of $P(K)$ to the number of vertices of $P(K)$.  We will do so
using methods of Atkinson~\cite{atkinson:volume}, which rely on
fundamental results of He~\cite{he:rigidity} on the rigidity of circle
patterns.  In particular, we employ the proof of Proposition~6.3 of
\cite{atkinson:volume}, which obtains the volume per vertex bounds we
desire, but for a different class of polyhedra. We first set up
notation.

If we lift the ideal polyhedron $P:=P(K)$ into $\HH^3$, the geodesic faces lift to lie on geodesic planes.  These correspond to Euclidean hemispheres, and each extends to give a circle on $\CC$.  For every such polyhedron, we obtain a finite collection of circles (or disks) on $\CC$ meeting at right angles in pairs, and meeting at ideal vertices in sets of four.  This defines a finite \emph{disk pattern} $D$ on $\CC$, with angle $\pi/2$ between disks.  Let $G(D)$ be the graph with a vertex for each disk and an edge between two vertices when the corresponding disks overlap on $\CC$.  Edges of $G(D)$ are labeled by the angle at which the disks meet, which in our case is $\pi/2$ for each edge.  Note all faces of $G(D)$ in our case are quadrilaterals, since all vertices of $P$ are 4--valent, hence the disk pattern $D$ is \emph{rigid} \cite{he:rigidity}.

Similarly, as described in Section~\ref{sec:weave}, we can form an infinite polyhedron $P_{\W}$ corresponding to a checkerboard polyhedron of the infinite weave $\W$:  view the diagram of $\W$ as squares with vertices on the integer lattice, and for each square draw the Euclidean circle on $\CC$ running through its four vertices.  Each such circle on $\CC$ corresponds to a hemisphere in $\HH^3$.  Let $P_{\W}$ be the infinite polyhedron obtained from $\HH^3$ by cutting out all half--spaces of $\HH^3$ bounded by these hemispheres.  (In Section~\ref{sec:weave}, this polyhedron was called $\tilde{X_1}$.)  By construction, faces meet in pairs at right angles, and at ideal vertices in fours.  We obtain a corresponding rigid disk pattern $D_\infty$, as in Figure~\ref{fig:W_top}(a).

\begin{define}\label{def:agreetogeneration}
Let $D$ and $D'$ be disk patterns.  Give $G(D)$ and $G(D')$ the path metric in which each edge has length $1$.  For disks $d$ in $D$ and $d'$ in $D'$, we say $(D,d)$ and $(D',d')$ \emph{agree to generation $n$} if the balls of radius $n$ about vertices corresponding to $d$ and $d'$ admit a graph isomorphism, with labels on edges preserved.
\end{define}

For any disk $d$, we let $S(d)$ be the geodesic hyperplane in $\HH^3$ whose boundary agrees with that of $d$.  That is, $S(d)$ is the Euclidean hemisphere in $\HH^3$ with boundary on the boundary of $d$.  For a disk pattern coming from a right angled ideal polyhedron, the planes $S(d)$ form the boundary faces of the polyhedron. In this case, the disk pattern $D$ is said to be \emph{simply connected}, meaning the union of the disks form a simply connected region, and \emph{ideal}, since it corresponds to an ideal polyhedron.

If $d$ is a disk in a disk pattern $D$, with intersecting neighboring disks $d_1, \dots, d_m$ in $D$, then $S(d) \cap S(d_i)$ is a geodesic $\gamma_i$ in $\HH^3$.  Assume that the boundary of $d$ is disjoint from the point at infinity.  Then the geodesics $\gamma_i$ on $S(d)$ bound an ideal polygon in $\HH^3$, and we may take the cone over this polygon to the point at infinity.  Denote the ideal polyhedron obtained in this manner by $C(d)$ (see Figure~\ref{fig:disk-pattern}).

\begin{figure}
\begin{tabular}{ccc}
 \includegraphics[scale=1.1]{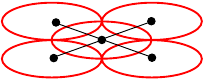} & \qquad &
 \includegraphics[scale=1.1]{figures/octahedron_circles2} \\
(a) & \qquad & (b) \\
\end{tabular}
\caption{(a) Disk pattern $(D,d)$ and graph $G(D)$.
  (b) Ideal polyhedron $C(d)$.}
  \label{fig:disk-pattern}
\end{figure}

The following lemma restates Lemma~6.2 of \cite{atkinson:volume}.

\begin{lemma}[Atkinson \cite{atkinson:volume}]\label{lemma:atkinson}
There exists a bounded sequence $0 \leq \epsilon_{\ell} \leq b <\infty$ converging to zero such that if $D$ is a simply connected, ideal, rigid, finite disk pattern containing a disk $d$ so that $(D_\infty, d_\infty)$ and $(D, d)$ agree to generation $\ell$ then
\[
|\vol(C(d)) - \vol(C(d_\infty))| \: \leq \: \epsilon_{\ell}.
\]
\end{lemma}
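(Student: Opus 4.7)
The plan is to invoke He's rigidity theorem for disk patterns \cite{he:rigidity} and reduce the lemma to a continuity statement for hyperbolic volume under geometric convergence of the cone polyhedra $C(\cdot)$.

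First I would normalize by a M\"obius transformation so that $d$ coincides with $d_\infty$ as a Euclidean disk in $\C$, with $\infty$ outside its closure. The combinatorics of $D_\infty$ at $d_\infty$ is that of the square lattice of the weave: $d_\infty$ has exactly four overlapping neighbors, corresponding to the four squares of $G(\W)$ sharing an edge with the central square. Since $(D,d)$ agrees with $(D_\infty, d_\infty)$ through generation $\ell \geq 1$, the disk $d$ likewise has exactly four neighbors $d_1, \ldots, d_4$ in $D$, arranged combinatorially as the $d_{\infty,i}$ in $D_\infty$; for large $\ell$, the whole $G(D)$-ball of radius $\ell$ about $d$ is graph-isomorphic, with labels, to the corresponding ball in $G(D_\infty)$.

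Next I would apply He's theorem to an arbitrary sequence $(D_n, d_n)$ of simply connected, ideal, rigid finite disk patterns agreeing with $(D_\infty, d_\infty)$ through generations $\ell_n \to \infty$: after the normalization above, the disks of $D_n$ lying within any fixed combinatorial ball about $d_n$ converge, in the Hausdorff topology on compact subsets of $\C$, to the corresponding disks of $D_\infty$. Lifting this convergence to $\HH^3$ gives convergence of the supporting hyperplanes $S(d_i)$ to $S(d_{\infty,i})$ on compact subsets of $\HH^3$, hence convergence of the geodesics $\gamma_i = S(d) \cap S(d_i)$ to $\gamma_{\infty,i}$, and convergence of the ideal quadrilateral on $S(d)$ to that on $S(d_\infty)$. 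Therefore the ideal pyramids $C(d)$ converge geometrically to $C(d_\infty)$, and since the hyperbolic volume of an ideal polyhedron is continuous in its shape (via Milnor's formula using Lobachevsky functions, or by the Schl\"afli identity), we conclude $\vol(C(d)) \to \vol(C(d_\infty))$.

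To finish, I would define
\[
\epsilon_\ell := \sup\bigl\{\,\lvert \vol(C(d)) - \vol(C(d_\infty))\rvert : (D,d) \text{ satisfies the hypothesis at generation } \ell\,\bigr\},
\]
so that the preceding paragraph shows $\epsilon_\ell \to 0$. For the uniform bound $b$: once $\ell \geq 1$, each $C(d)$ is an ideal polyhedron with at most five ideal vertices (the four on $S(d)$ together with the apex at $\infty$), and any such ideal polyhedron has volume bounded above by an absolute constant; for the finitely many smaller values of $\ell$ we may enlarge $b$ as needed. The main obstacle is the quantitative use of He's theorem: one must carefully verify its hypotheses for $D_\infty$ (simple connectivity, ideality, and rigidity, the last via the quadrilateral faces of $G(D_\infty)$ noted above) and extract the uniform convergence needed over all admissible $(D,d)$, not just a single sequence. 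Once the disks are known to converge, the passage to $C(d)$ and to volume is routine.
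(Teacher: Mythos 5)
The paper itself offers no proof of this lemma: it is quoted verbatim as Lemma~6.2 of Atkinson \cite{atkinson:volume}, so the only honest comparison is with Atkinson's argument. Your outline does follow that argument's strategy --- normalize so $d=d_\infty$, use rigidity of $D_\infty$ to force geometric convergence of the nearby disks, deduce convergence of $\vol(C(d))$ by continuity of volume in the shape of the ideal pyramid, and package everything as a supremum $\epsilon_\ell$ whose convergence to $0$ follows by a subsequence/contradiction argument. The endgame is fine: once $\ell\geq 1$ the disk $d$ has exactly four neighbors, $C(d)$ is an ideal pyramid over a quadrilateral, and its volume is uniformly bounded; continuity of volume via the Lobachevsky function is standard.

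The gap is exactly where you flag it, and it is not a side issue --- the middle paragraph is the entire content of the lemma, and ``apply He's theorem'' does not by itself deliver it. He's result \cite{he:rigidity} is a uniqueness statement: two suitable \emph{infinite} patterns realizing the same labeled graph differ by a M\"obius transformation. What you need is a convergence statement for \emph{finite} patterns that merely agree combinatorially with $D_\infty$ out to generation $\ell_n\to\infty$. Bridging the two requires: (i) a normalization pinning down the M\"obius freedom (fixing $d=d_\infty$ alone leaves a rotation/reflection and does not yet control the neighbors); (ii) a compactness (normal families) argument showing that, on each fixed combinatorial ball about $d$, the disks of the sequence form a precompact family --- in particular ruling out disks degenerating to points or to half-planes, which is where the right-angle intersection data, simple connectivity, and ideality must be used; (iii) extraction of a subsequential limit and verification that it is a genuine simply connected, ideal, rigid disk pattern with labeled graph $G(D_\infty)$; and only then (iv) He's uniqueness to identify the limit with $D_\infty$ and upgrade subsequential convergence to convergence of the full sequence. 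Steps (ii) and (iii) carry the real work and are asserted rather than derived in your write-up. A smaller point: at $\ell=0$ the agreement hypothesis is vacuous, $d$ may have arbitrarily many neighbors, and $\sup|\vol(C(d))-\vol(C(d_\infty))|$ need not be finite, so ``enlarge $b$ for the finitely many small $\ell$'' does not literally work there; the uniform bound $b$ only makes sense for $\ell\geq 1$ (or under whatever convention Atkinson adopts for generation $0$).
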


We now generalize Proposition~6.3 of \cite{atkinson:volume} to classes of polyhedra that include the checkerboard ideal polyhedra of interest in this paper.  The proof of the following lemma is essentially contained in \cite{atkinson:volume}, but we present it here for completeness.

\begin{lemma}\label{lemma:atkinsonpoly}
Let $D_\infty$ denote the infinite disk pattern coming from $P_{\W}$, as defined above, with fixed disk $d_\infty$.  Let $P_n$ be a sequence of right angled hyperbolic polyhedra with corresponding disk patterns $D_n$.  Suppose the following hold.
\begin{enumerate}
\item If $F_\ell^n$ is the set of disks $d$ in $D_n$ such that $(D_n, d)$ agrees to generation $\ell$ but not to generation $\ell+1$ with $(D_\infty, d_\infty)$, then
\[
\lim_{n\to\infty} \frac{\left| \bigcup_\ell F_\ell^n \right|} {v(P_n)} = 1,
\mbox{ and }
\lim_{n\to\infty} \frac{|F^n_\ell|}{ v(P_n) } = 0.
\]
\item For every positive integer $k$, let $|f^n_k|$ denote the number of faces of $P_n$ with $k$ sides that are not contained in $\cup_\ell F^n_\ell$ and do not meet the point at infinity.  Then
\[
\lim_{n\to\infty} \frac{ \sum_k k|f^n_k| }{ v(P_n) } = 0.
\]
\end{enumerate}
Under these hypotheses,
\[
\lim_{n\to \infty} \frac{\vol(P_n)}{v(P_n)} = \frac{\voct}{2}.
\]
\end{lemma}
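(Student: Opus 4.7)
The argument follows the proof of \cite[Proposition 6.3]{atkinson:volume}, where a parallel statement is established for a different class of right-angled ideal polyhedra; the same ingredients apply here. The first step is to establish a cone decomposition for $\vol(P_n)$. Place $P_n$ in the upper half-space model of $\HH^3$, with a chosen ideal vertex at $\infty$, so that the faces of $P_n$ meeting $\infty$ lie on vertical half-planes and the remaining faces correspond to the disks $d \in D_n$ via Euclidean hemispheres. Because $P_n$ is right-angled and ideal, it coincides with the region above these hemispheres and inside the vertical half-planes, and is tiled (up to measure zero) by the vertical tents $C(d)$ over the hemispherical faces. Hence
\[
\vol(P_n) \,=\, \sum_{d \in D_n} \vol(C(d)).
\]

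For the model contribution, $C(d_\infty)$ is the top square pyramid of a regular ideal octahedron in the tiling of $\R^3 - \W$ described in Section~\ref{sec:weave}, so $\vol(C(d_\infty)) = v_8/2$. For each good face $d \in F^n_\ell$, Lemma~\ref{lemma:atkinson} yields
\[
\bigl|\vol(C(d)) - v_8/2\bigr| \,\leq\, \epsilon_\ell.
\]
For an exceptional face $d$ with $k$ sides (not in any $F^n_\ell$ and not meeting $\infty$), triangulate the ideal $k$-gon into $k-2$ ideal triangles and cone each to $\infty$; the resulting $k-2$ ideal tetrahedra each have volume at most $v_3$, so $\vol(C(d)) \leq (k-2)\,v_3$.

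Summing over all faces, the exceptional contribution is at most $v_3 \sum_k k\,|f^n_k|$, which is $o(v(P_n))$ by hypothesis~(2). The good contribution is
\[
\frac{v_8}{2}\,\Bigl|\bigcup_\ell F^n_\ell\Bigr| \,+\, O\Bigl(\sum_\ell |F^n_\ell|\,\epsilon_\ell\Bigr),
\]
and the error $\sum_\ell |F^n_\ell|\,\epsilon_\ell$ is $o(v(P_n))$ by a standard splitting: for any $\delta>0$, choose $L$ so that $\epsilon_\ell < \delta$ for all $\ell > L$, and use the uniform bound $\epsilon_\ell \leq b$ to obtain
\[
\sum_\ell |F^n_\ell|\,\epsilon_\ell \,\leq\, b\sum_{\ell \leq L}|F^n_\ell| \,+\, \delta\, v(P_n),
\]
where the first sum is $o(v(P_n))$ by the second part of hypothesis~(1); letting $\delta \to 0$ completes the bound. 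Combining these estimates with $\bigl|\bigcup_\ell F^n_\ell\bigr|/v(P_n) \to 1$ from the first part of hypothesis~(1) gives $\vol(P_n) = (v_8/2)\,v(P_n) + o(v(P_n))$, which proves the claim.

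The main obstacle is verifying the cone decomposition in the first step for our class of polyhedra, i.e., confirming that $P_n$ really is the epigraph of the hemispherical faces. One must check that $P_n$ sits on the correct side of each hemisphere, which is automatic for faces where the local disk pattern matches $D_\infty$ but requires care for exceptional faces. In Atkinson's setting this is handled by the structural hypotheses on the polyhedra, and the analogous argument carries over to the polyhedra considered here.
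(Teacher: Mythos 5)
Your proposal is correct and follows essentially the same route as the paper: decompose $\vol(P_n)$ into the cones $C(d)$ over the faces, apply Lemma~\ref{lemma:atkinson} to the faces in $\cup_\ell F^n_\ell$, bound each exceptional $k$-gon cone by $O(k)$ (you use $(k-2)v_3$ where the paper uses $k\Lambda(\pi/6)$ --- both suffice given hypothesis (2)), and control $\sum_\ell |F^n_\ell|\epsilon_\ell$ by the same splitting at a level $L$ using both parts of hypothesis (1). The cone-decomposition step you flag as the main obstacle is treated as part of the setup in the paper (the polyhedra are presented as regions above the hemispheres $S(d)$), so no substantive gap remains.
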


\begin{proof}
First, let $f^n_k$ be a face with $k$ sides that is not contained in $\cup_\ell F^n_\ell$, and which does not meet the point at infinity.  Then $\vol(C(f^n_k))$ has volume at most $k$ times the maximum value of the Lobachevsky function $\Lambda$, which is $\Lambda(\pi/6)$ \cite[Chapter~7]{thurston:notes}.  Let $E^n$ denote the sum of the actual volumes of all the cones over the faces $f^n_k$, for every integer $k$.  Then we have
\[
E^n \leq \sum_k\sum_{f^n_k} k\Lambda(\pi/6) = \sum_k k\,|f^n_k|\,\Lambda(\pi/6).
\]

For any face $f$ in $F^n_\ell$, let $\delta^n_\ell(f)$ be a positive number such that $\vol(C(f)) = \voct/2 \pm \delta^n_\ell(f)$.
Then
\[
\vol(P_n) = \sum_\ell \sum_{f\in F^n_\ell} \left(\frac{\voct}{2} \pm \delta^n_\ell(f)\right) + E^n.
\]
Hence
\[
\vol(P_n) = \left|\bigcup_\ell F^n_\ell \right|\frac{\voct}{2} + \sum_\ell\sum_{f\in F^n_\ell}(\pm \delta^n_\ell(f)) + E^n.
\]

We divide each term by $v(P_n)$ and take the limit.  For the first term, we obtain
\[
\lim_{n\to \infty} \frac{\left|\bigcup_\ell F^n_\ell\right|}{v(P_n)}\,\frac{\voct}{2} = \frac{\voct}{2}.
\]
By Lemma~\ref{lemma:atkinson}, there are positive numbers $\epsilon_\ell$ such that $\delta^n_\ell(f) \leq \epsilon_\ell$, so the second term becomes
\[
\lim_{n\to\infty} \frac{|\sum_\ell\sum_{f\in F^n_\ell} (\pm\delta^n_\ell(f))|}{v(P_n)} \leq \lim_{n\to\infty} \frac{\sum_\ell |F^n_\ell|\epsilon_\ell}{v(P_n)}.
\]
This can be seen to be zero, as follows.  Fix any $\varepsilon>0$.
Because $\lim\limits_{\ell\to\infty}\epsilon_\ell=0$, there is $L$ sufficiently large that $\epsilon_\ell < \varepsilon/3$, for $\ell>L$.  Then $\sum_{\ell=1}^{L} \epsilon_\ell$ is a finite number, say $M$.  By item (1) in the statement of this lemma, there exists $N$ such that if $n>N$ then $\max_{\ell\leq L}|F_{\ell}^n|/v(P_n) < \varepsilon/(3M\cdot L)$ and $|\cup_\ell F_\ell^n|/v(P_n) < (1+\varepsilon)$.
Then for $n> N$,
\[
\frac{\sum_\ell|F_\ell^n|\epsilon_\ell}{v(P_n)} =
\frac{\sum_{\ell=1}^{L} |F_\ell^n|\epsilon_\ell}{v(P_n)} + \frac{\sum_{\ell>L} |F_\ell^n|\epsilon_\ell}{v(P_n)} < \frac{\varepsilon L}{3M\cdot L}\,M + (1+\varepsilon)\frac{\varepsilon}{3} < \varepsilon.
\]
Hence the limit of the second term is zero.

Finally, the third term gives us
\[
\lim_{n\to\infty} \frac{E^n}{v(P_n)} \leq
\frac{\sum_k k\,|f_k^n|\,\Lambda(\pi/6)}{v(P_n)} = 0.
\]

Therefore, $\lim\limits_{n\to\infty} \vol(P_n)/v(P_n) = \voct/2.$
\end{proof}

\pagebreak

We can now prove Theorem ~\ref{thm:geommax}, which we recall from the introduction.

\begin{named}{Theorem~\ref{thm:geommax}}
Let $K_n$ be any sequence of hyperbolic alternating link diagrams that contain no cycle of tangles, such that
\begin{enumerate}
\item there are subgraphs $G_n\subset G(K_n)$ that form a F\o lner sequence for $G(\W)$, and
\item $\lim\limits_{n\to\infty} |G_n|/ c(K_n) = 1$.
\end{enumerate}
Then $K_n$ is geometrically maximal:
$\displaystyle \lim_{n\to\infty}\frac{\vol(K_n)}{c(K_n)}=\voct. $
\end{named}

\begin{proof}
We may assume that $K_n$ are prime and twist--reduced diagrams.  Any hyperbolic link is prime.  If a sequence $K_n$ satisfies the other conditions above, then the twist--reduced diagrams $\tilde K_n$ also satisfy these conditions because twist reduction does not change crossing number, and only changes the diagram in $G(K_n)-G_n$.

We first consider the case that $K_n$ contains no bigons.
Because $K_n$ is prime, alternating, has no bigons and no cycle of tangles, Theorem~\ref{thm:volbound} implies that
$\vol(S^3-K_n) \geq 2\,\vol(P(K_n))$.

The crossing number of $K_n$ is equal to the number of vertices of $P(K_n)$, so dividing by crossing number gives
\[ \frac{\vol(S^3-K_n)}{c(K_n)} \: \geq \: 2\,\frac{\vol(P(K_n))}{v(P(K_n))}.\]

The disk pattern graph $G(D_n)$ associated with $P(K_n)$ is the dual
of $G(K_n)$. Since $G(\W)$ is isomorphic to its own dual, to simplify
notation we may assume $G_n\subset G(D_n)$.  We will also assume that
the polygon enclosed by $G_n$ is simply connected, otherwise $G_n$ can
be modified by removing cutsets without affecting the required limits.
Pick a point in $G(D_n)$ which is outside $G_n$ to send to
infinity. We want to apply Lemma~\ref{lemma:atkinsonpoly}.

For condition (2) of Lemma~\ref{lemma:atkinsonpoly}, by counting vertices we obtain $\sum_k k\,|f_k^n|\leq 4\,|G(K_n)-G_n|$.  
The factor $4$ appears because every vertex belongs to four faces, so it will be counted at most four times in the sum.
Condition (2) now follows from $|G(K_n)|=v(P(K_n))=c(K_n)$ and $\displaystyle\lim_{n\to\infty} \frac{|G_n|}{c(K_n)} = 1$.

\begin{figure}
\includegraphics[width=4in]{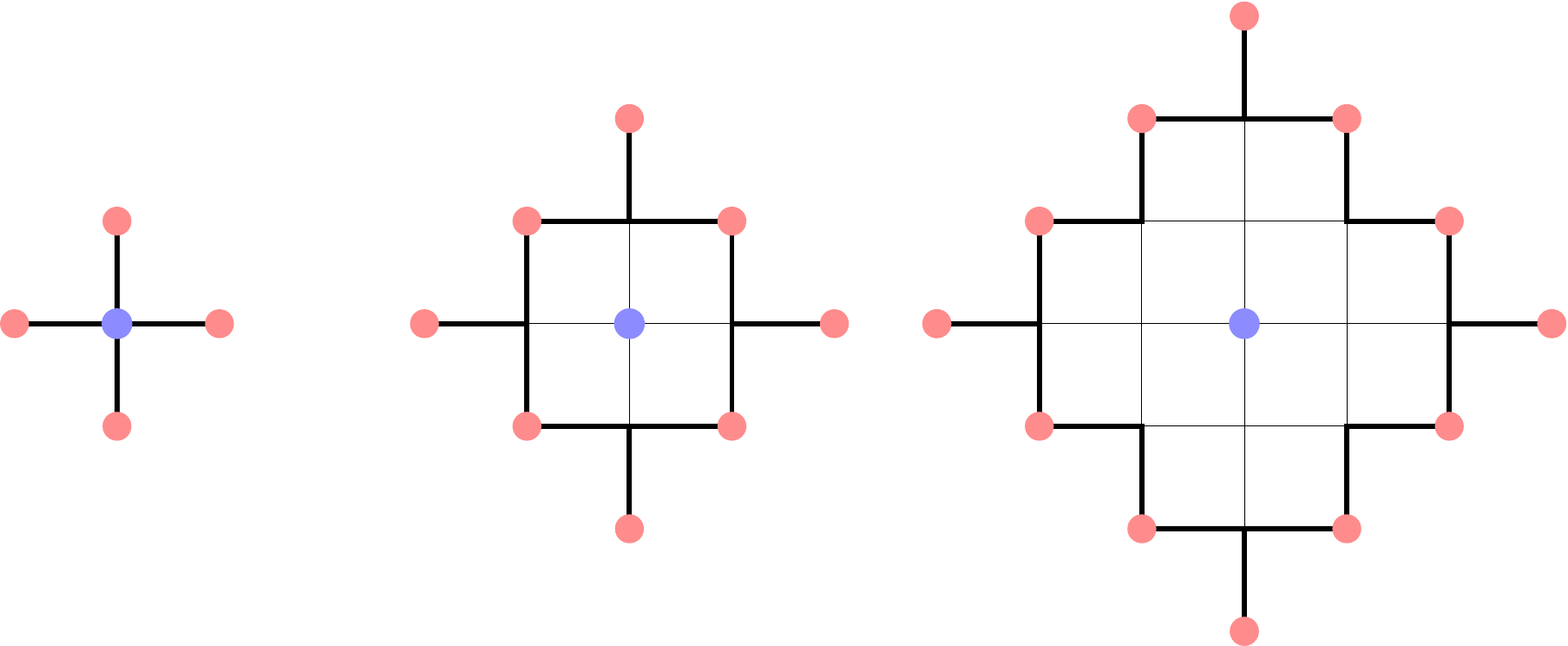}
\caption{Balls $B(v,r)$ for $r= 1,\, 2,\, 3$ in the square
  lattice. The vertices of $\partial B(v,r)$ are shown in bold.   }
\label{sq-lat-nbd}
\end{figure}

We now show that condition (1) of Lemma~\ref{lemma:atkinsonpoly} is
also satisfied.  
The sets $F_\ell^n$ consist of disks $d$ such that $(D_n, d)$ agrees
to generation $\ell$ but not to generation $\ell+1$ with
$(D_\infty, d_\infty)$.
Let $d\in F_\ell^n$ and let $v$ be the vertex in $G_n$ corresponding to $d$. 
Let $B(v,\ell) \subset G_n$ denote the ball centered at $v$ of radius $\ell$ in the path metric on $G_n$.
Then $d\in F_\ell^n$ means that $B(v,\ell) \subset G_n$ but
$B(v,\ell+1) \not\subset G_n$.  Hence, 
the distance from $v$ to $\partial G_n$ equals $\ell$, 
so that
$v \in \partial B(x,\ell)$ for some $x \in \partial G_n$. Thus,
$\displaystyle F_\ell^n \subset \cup_{x\in \partial G_n} \partial B(x,\ell)$.  

Since
$G_n \subset G(\W)$ which is the square lattice,
$|\partial B(x,\ell)|=4 \ell$ for any vertex $x \in G(\W)$.  See Figure
\ref{sq-lat-nbd}. Hence, $|F_\ell^n|\leq 4 \ell \, |\partial G_n|$.
Thus, we obtain one part of condition (1):
\[ \lim_{n\to\infty} \frac{|F_\ell^n|}{v(P(K_n))} \leq \lim_{n\to\infty}\frac{4 \ell \, 
|\partial G_n|}{c(K_n)} = 4 \ell\, \lim_{n\to\infty}\frac{ |\partial G_n|}{|G_n|}\cdot
\frac{|G_n|}{c(K_n)} = 0. \]

The fact that $G_n \subset G(\W)$ also implies that every vertex in
$G_n-\partial G_n$ corresponds to a disk in $F_\ell^n$ for some
$\ell$, and no vertex in $\partial G_n$ corresponds to a disk in any
of the $F_\ell^n$.  Hence, $|\cup_\ell F_\ell^n|= |G_n-\partial G_n|$.
Now, $\displaystyle\lim_{n\to\infty}\frac{|\partial G_n|}{|G_n|}=0$ and $\displaystyle\lim_{n\to\infty} \frac{|G_n|}{c(K_n)} = 1$ imply the second part of condition (1):
\[ \lim_{n\to\infty}\frac{|\cup_\ell F_\ell^n|}{v(P(K_n))}=\lim_{n\to\infty}\frac{|G_n-\partial G_n|}{c(K_n)}=1. \]

Thus by Lemma~\ref{lemma:atkinsonpoly},
\[ \lim_{n\to\infty} \frac{\vol(S^3-K_n)}{c(K_n)} \: \geq \: \lim_{n\to\infty} 2\,\frac{\vol(P(K_n))}{v(P(K_n))} \: = \: \voct. \]
Since equation (\ref{eq:dylan}) tells us that $\vol(S^3-K_n)/c(K_n) \leq \voct$, the above limit must equal $\voct$.

Finally, if $K_n$ contains bigons, let $K'_n$ denote the link with
both blue and red bigons removed.  Theorem~\ref{thm:volbound} implies
that $\vol(S^3-K_n) \geq 2\,\vol(P(K'_n))$.

Now, since all bigons of $K_n$ must be in $G(K_n)-G_n$, the above proof implies
\[
\lim_{n\to\infty}\frac{\vol(S^3-K_n)}{c(K_n)} \: \geq \: \lim_{n\to\infty}2\,\frac{\vol(P(K'_n))}{v(P(K'_n))}= \voct.
\]
Again by equation (\ref{eq:dylan}), this limit must also equal $\voct$. 
\end{proof}

\bibliographystyle{amsplain}
\bibliography{references}

\end{document}